\newtheorem{theorem}{Theorem}[section]
\newtheorem{lemma}[theorem]{Lemma}
\newtheorem{Remark}[theorem]{Remark}
\newtheorem{Corollary}[theorem]{Corollary}
\numberwithin{equation}{section}
\newcommand{\na}{\nabla}
\newcommand{\al}{\alpha}
\newcommand{\la}{\lambda}
\providecommand{\norm}[1]{\left\Vert#1\right\Vert}
\providecommand{\norms}[1]{\left\vert#1\right\vert}
\def\r3{\mathbb{R}^3}
\begin{document}
\title[Compressible bipolar Euler-Maxwell system]{Global well-posedness of the compressible bipolar Euler-Maxwell system in $\r3$}

\author{Zhong Tan}
\address{School of Mathematical Sciences\\
Xiamen University\\
Xiamen, Fujian 361005, China}
\email[Z. Tan]{ztan85@163.com}

\author{Yong Wang}
\address{School of Mathematical Sciences\\
Xiamen University\\
Xiamen, Fujian 361005, China}
\email[Y. Wang]{wangyongxmu@163.com}

\keywords{Compressible bipolar Euler-Maxwell system; Global solution; Time decay rate; Energy method; Interpolation.}

\subjclass[2010]{82D10; 35A01; 35B40; 35Q35; 35Q61.}

\thanks{Corresponding author: Yong Wang, wangyongxmu@163.com}

\begin{abstract}
We first construct the global unique solution by assuming that the initial data is small in the $H^3$ norm but its higher order derivatives could be large. If further the initial data belongs to $\Dot{H}^{-s}$ ($0\le s<3/2$) or $\dot{B}_{2,\infty}^{-s}$ ($0< s\le3/2$), we obtain the various decay rates of the solution and its higher order derivatives. As an immediate byproduct, the  $L^p$--$L^2$ $(1\le p\le 2)$ type of the decay rates follow without requiring the smallness for $L^p$ norm of initial data. In particular, the decay rate for the difference of densities could reach to $(1+t)^{-\frac{13}{4}}$ in $L^2$ norm.
\end{abstract}

\maketitle

\section{Introduction}
%%%%%%%%%%%%%%%%%%%%%%%%%%%%%%%%%%%%%%%%%%%%%%
We consider the compressible isentropic bipolar Euler-Maxwell system in three space dimensions \cite{C,MRS,RG}
\begin{equation}  \label{yiyi}
\left\{
\begin{array}{lll}
\partial_t \tilde n_{\pm}+{\rm div} (\tilde n_{\pm} \tilde u_{\pm})=0,\\
\partial_t (\tilde n_{\pm} \tilde u_{\pm})+{\rm div}(\tilde n_{\pm} \tilde u_{\pm}\otimes \tilde u_{\pm}) + \na p_{\pm}(\tilde n_{\pm})=\pm\tilde n_\pm(\tilde{E}+\varepsilon \tilde u_\pm\times \tilde{B})- \frac{1}{\tau_\pm} \tilde n_\pm \tilde u_\pm,\\
\varepsilon\lambda^2\partial_t \tilde{E}-\na \times \tilde{B}= \varepsilon\left(\tilde n_- \tilde u_--\tilde n_+ \tilde u_+\right),\\
\varepsilon\partial_t \tilde{B}+\na \times \tilde{E}=0,\\
\lambda^2{\rm div}\tilde{E}= \tilde n_+  -\tilde n_-,\ \ {\rm div}\tilde{B}=0,\\
(\tilde n_\pm,\tilde u_\pm,\tilde{E},\tilde{B})|_{t=0}=(\tilde n_{\pm0},\tilde u_{\pm0}, \tilde{E}_0, \tilde{B}_0).
\end{array}
\right.
\end{equation}
Here the unknown functions are the charged density $\tilde n_\pm$, the velocity $\tilde u_\pm$, the electric field $\tilde{E}$ and the magnetic field $\tilde{B}$, with the subscripts $+$ and $-$ representing ion and electron respectively. We assume the pressure $p_\pm(\tilde n_\pm)=A_\pm\tilde n_{\pm}^{\gamma}$
with constants $A_\pm>0$ and $\gamma\ge 1$  the adiabatic exponent. $1/\tau_\pm>0$ are the velocity relaxation time of ions and electrons respectively. $\lambda>0$ is the Debye length, and $\varepsilon=1/c$ with $c$ the speed of light.

Although its significance in plasma physics and semiconductor physics, there are merely few mathematical results about the compressible Euler-Maxwell system since its complexity in mathematics. For the unipolar case:  Chen, Jerome and Wang \cite{CJW} showed the global existence of entropy weak solutions to the initial-boundary value problem for arbitrarily large initial data in $L^{\infty}(\mathbb{R})$; Guo and Tahvildar-Zadeh \cite{GT}
showed a blow-up criterion for spherically symmetric Euler-Maxwell system;
Recently, there are some results on the global existence
and the asymptotic behavior of smooth solutions with small amplitudes, see Tan et al. \cite{TWW}, Duan \cite{D}, Ueda and Kawashima \cite{UK},  Ueda et al. \cite{UWK}; For the asymptotic limits that derive simplified models starting from the Euler-Maxwell system, we refer to \cite{HP,PWG,X} for the relaxation limit, \cite{X} for the non-relativistic limit, \cite{PW1,PW2} for the quasi-neutral limit, \cite{T1,T2} for WKB asymptotics and the references therein.
For the bipolar case: Duan et al. \cite{DLZ} showed the global existence and time-decay rates of solutions near constant steady states with the vanishing electromagnetic field; Xu et al. \cite{XXK} studied the well-posedness in critical Besov spaces. Since the unipolar or bipolar Euler-Maxwell system is a symmetrizable hyperbolic system, the Cauchy problem in $\r3$ has a local unique smooth solution when the initial data is smooth, see Kato \cite{K} and Jerome \cite{J} for instance. Besides, we can refer to \cite{FWK,WFL} for the non-isentropic case.

In this paper, we will derive a refined global existence of smooth solutions near the constant equilibrium $(n_\infty,n_\infty,0,0,0,B_\infty)$ to the compressible isentropic bipolar Euler-Maxwell system and show some various time decay rates of the solution as well as its spatial derivatives of any order. Because of the complexities and some new difficulties, we will study the compressible non-isentropic bipolar Euler-Maxwell system in the future work. We should notice that the relaxation term of the velocity plays an important role in the whole paper. The non-relaxation case is much more difficult, we refer to \cite{GM,G,GP} for such a case. For the compressible unipolar Euler-Maxwell system \cite{TWW}, we do not need that the initial electron density belongs to negative Sobolev spaces $\dot{H}^{-s}$ or negative Besov spaces $\dot{B}^{-s}_{2,\infty}$ when deriving the optimal decay rates of solutions. However, in Theorem \ref{decay} the initial total densities $n_{10}$ must belong to $\dot{H}^{-s}$ or $\dot{B}^{-s}_{2,\infty}$ since the cancelation between two carriers. In fact, in Theorem \ref{decay} the assumption for the initial difference of densities $n_{20}$ could be deleted given \cite{TWW}. Compared with \cite{TWW}, there are two major difficulties except the computational complexity. First of all, the bipolar system \eqref{yiyi} could be reformulated equivalently as the damped Euler equations coupled with the one-fluid Euler-Maxwell equations \eqref{yi}. Then, the total densities $n_1$ in the damped Euler equations is degenerately dissipative because of the cancelation between two carriers. It is difficult to close the energy estimates since the degenerate dissipation of $n_1$. We manage to obtain the effective energy estimates by dealing carefully with these terms involved with $n_1$ in the proofs of Lemma \ref{energy lemma} and Lemma \ref{other di}. The other difficulty is caused by the nonlinear function $f(\frac{n_1\pm n_2}{2})$. Since $n_1$ and $n_2$ have different dissipative structures, we must be careful about the function $f(\frac{n_1\pm n_2}{2})$. Here we overcome such a obstacle by some detailed calculi. Without loss of generality, we take all the physical constants $\tau_\pm,\varepsilon,\lambda ,A_\pm, n_\infty$ in \eqref{yiyi} to be one.

We define
\begin{equation}\label{scaling}
\left\{
\begin{array}{lll}
n_\pm(x,t)=\frac{2}{\gamma-1}\left\{\left[\tilde n_\pm\Big(x,\frac{t}{\sqrt{\gamma}}\Big)\right]^{\frac{\gamma-1}{2}}-1\right\},\quad u_\pm(x,t)=\frac{1}{\sqrt{\gamma}}\tilde{u}_\pm\Big(x,\frac{t}{\sqrt{\gamma}}\Big), \\
E(x,t)=\frac{1}{\sqrt{\gamma}}\tilde{E}\Big(x,\frac{t}{\sqrt{\gamma}}\Big), \qquad B(x,t)=\frac{1}{\sqrt{\gamma}}\tilde{B}\Big(x,\frac{t}{\sqrt{\gamma}}\Big)-B_{\infty}.
\end{array}
\right.
\end{equation}
Then the Euler-Maxwell system \eqref{yiyi} is reformulated equivalently as
\begin{equation}
\left\{
\begin{array}{lll}
\displaystyle\partial_tn_\pm+{\rm div} u_\pm=-u_\pm\cdot\na n_\pm
-\mu n_\pm{\rm div} u_\pm,   \\
\displaystyle\partial_tu_\pm+\nu  u_\pm\mp u_\pm\times B_{\infty}+\na n_\pm\mp\nu  E=-u_\pm\cdot\na u_\pm
-\mu n_\pm\na n_\pm\pm u_\pm\times B, \\
\partial_t E-\nu\na \times B+\nu\left(u_+-u_-\right)= \nu\left(f(n_-) u_-- f(n_+)u_+\right),\\
\partial_t B+\nu  \na \times E=0,  \\
{\rm div} E=\nu  \left(f(n_+)-f(n_-)\right),\ \ {\rm div} B=0, \\
(n_\pm,u_\pm,E,B)|_{t=0}=(n_{\pm0},u_{\pm0},E_0,B_0).
\end{array}
\right.\nonumber
\end{equation}
Here $\mu:=\frac{\gamma-1}{2}$, $\nu:=\frac{1}{\sqrt{\gamma}}$ and the nonlinear function $f(n_\pm)$ is defined by
\begin{equation}  \label{f}
f(n_\pm):=\left(1+ \frac{\gamma-1}{2} n_\pm\right)^{\frac{2}{\gamma-1}}-1.
\end{equation}
In fact, we have assumed $\gamma>1$ in \eqref{scaling}. If $\gamma=1$, we instead define
$n_\pm:=\ln\tilde{n}_\pm.$

Let
\begin{equation}
\begin{split}
n_1=n_++n_-,\ n_2=n_+-n_-,\ u_1=u_++u_-,\ u_2=u_+-u_-,\nonumber
\end{split}
\end{equation}
that is
\begin{equation}\label{n1 n2}
\begin{split}
n_+=\frac{n_1+n_2}{2},\ n_-=\frac{n_1-n_2}{2},\ u_+=\frac{u_1+u_2}{2},\ u_-=\frac{u_1-u_2}{2}.
\end{split}
\end{equation}
Then $U:=(n_1,n_2,u_1,u_2,E,B)$ satisfies
\begin{equation}  \label{yi}
\left\{
\begin{array}{lll}
\displaystyle\partial_tn_1+{\rm div} u_1=g_1,   \\
\displaystyle\partial_tu_1+\nu  u_1-u_2\times B_{\infty}+\na n_1=g_2+u_2\times B, \\
\displaystyle\partial_tn_2+{\rm div} u_2=g_3,   \\
\displaystyle\partial_tu_2+\nu  u_2-u_1\times B_{\infty}+\na n_2-2\nu  E=g_4+u_1\times B, \\
\partial_t E-\nu  \na \times B+\nu u_2= g_5, & & \\
\partial_t B+\nu  \na \times E=0,  \\
{\rm div} E=\nu  \left(f(\frac{n_1+n_2}{2})-f(\frac{n_1-n_2}{2})\right),\ \ {\rm div} B=0,
\end{array}
\right.
\end{equation}
with initial data
$$U|_{t=0}=U_0:=(n_{10},n_{20},u_{10},u_{20},E_0,B_0).$$
Here
\begin{equation}\label{g}
\begin{split}
g_1&=-\frac{1}{2} \left(u_1\cdot\na n_1+u_2\cdot\na n_2\right)-\frac{\mu}{2}\left(n_1{\rm div}u_1+n_2{\rm div} u_2\right),\\
g_2&=-\frac{1}{2} \left(u_1\cdot\na u_1+u_2\cdot\na u_2\right)-\frac{\mu}{2}\left(n_1\na n_1+n_2\na n_2\right),\\
g_3&=-\frac{1}{2} \left(u_1\cdot\na n_2+u_2\cdot\na n_1\right)-\frac{\mu}{2}\left(n_1{\rm div}u_2+n_2{\rm div} u_1\right),\\
g_4&=-\frac{1}{2} \left(u_1\cdot\na u_2+u_2\cdot\na u_1\right)-\frac{\mu}{2}\left(n_1\na n_2+n_2\na n_1\right),\\
g_5&=\nu \left( f\left(\frac{n_1-n_2}{2}\right) \frac{u_1-u_2}{2}- f\left(\frac{n_1+n_2}{2}\right)\frac{u_1+u_2}{2}\right).
\end{split}
\end{equation}

\smallskip\smallskip

\noindent \textbf{Notations:} In this paper, we use $H^{s}(\mathbb{R}^{3}),s\in \mathbb{R}$ to denote the usual
Sobolev spaces with norm $\norm{\cdot}_{H^{s}}$ and $L^{p}(\mathbb{R}^{3}),1\leq p\leq
\infty $ to denote the usual $L^{p}$ spaces with norm $
\norm{\cdot}_{L^{p}}$.
$\na ^{\ell }$ with an
integer $\ell \geq 0$ stands for the usual any spatial derivatives of order $
\ell $. When $\ell <0$ or $\ell $ is not a positive integer, $\na ^{\ell }
$ stands for $\Lambda ^{\ell }$ defined by $
\Lambda^\ell f := \mathscr{F}^{-1} (|\xi|^\ell  \mathscr{F}{f}) $, where $\mathscr{F}$ is the usual Fourier transform operator and $\mathscr{F}^{-1}$ is its inverse. We use $\dot{H}
^{s}(\mathbb{R}^{3}),s\in \mathbb{R}$ to denote the homogeneous Sobolev
spaces on $\mathbb{R}^{3}$ with norm $\norm{\cdot}_{\dot{H}^{s}}$ defined by
$\norm{f}_{\dot{H}^s}:=\norm{\Lambda^s f}_{L^2}$. We then recall the homogeneous Besov spaces. Let
$\phi \in C^\infty_c(R^3_\xi)$ be such that $\phi(\xi) = 1$ when $|\xi| \le 1$ and $\phi(\xi) = 0$ when $|\xi| \ge 2$.  Let
$\varphi(\xi) = \phi(\xi) - \phi(2\xi)$ and $\varphi_j(\xi) = \varphi( 2^{-j}\xi)$
for ${j \in \mathbb {Z}}$. Then by the
construction,
$\sum_{j\in\mathbb {Z}}\varphi_j(\xi)=1$ if $\xi\neq 0.$
We define $\dot{\Delta}_j
f:= \mathscr{F}^{-1}(\varphi_j)* f$, then for $s\in \mathbb{R}$ and $1\le p,r\le \infty$, we define the homogeneous Besov spaces $\dot{B}_{p,r}^{s}(\r3)$ with norm $\norm{\cdot}_{\dot{B}_{p,r}^{s}}$ defined by
$$\|f\|_{\dot{B}_{p,r}^{s}}:=\Big(\sum\limits_{j\in\mathbb{Z}}2^{rsj}\|\dot{\Delta}_j f\|_{L^{p}}^{r}\Big)^{\frac1r}.$$Particularly, if $r=\infty$, then
\begin{equation}
\|f\|_{\dot{B}_{p,\infty}^{s}}:=\sup\limits_{j\in\mathbb{Z}}2^{sj}\norm{\dot{\Delta}_j f}_{L^{p}}.\nonumber
\end{equation}

Throughout this paper  we let $C$  denote
some positive (generally large) universal constants and $\lambda$ denote  some positive (generally small) universal constants. They {\it do not} depend on either $k$ or $N$; otherwise, we will denote them by $C_k$, $C_{N}$, etc.
We will use $a \lesssim b$ if $a \le C b$, and $a\thicksim b$ means that $a\lesssim b$ and $b\lesssim a$.
We use $C_0$ to denote the constants depending on the initial data and $k,N,s$.
For simplicity, we write $\norm{(A,B)}_{X}:=\norm{A}_{X} +\norm{B}_{X}$ and $\int f:=\int_{\mathbb{R}^3}f\,dx.$ $(\ast)\times\varepsilon+(\ast\ast)$ denote that multiplying $(\ast)$ by a sufficiently small but fixed factor $\varepsilon$ and then adding it to $(\ast\ast)$.

\smallskip\smallskip

For $N\ge 3$, we define the energy functional by
\begin{equation}
\mathcal{E}_N(t):=\sum_{l=0}^{N}\norm{ \na^{l}U}_{L^2}^2\nonumber
\end{equation}
and the corresponding dissipation rate by
\begin{equation}
\mathcal{D}_N(t):=\sum_{l=1}^{N}\norm{\na^{l}n_1}_{L^2}^2+\sum_{l=0}^{N}\norm{\na^{l}(n_2,u_1,u_2)}_{L^2}^2+\sum_{l=0}^{N-1}\norm{\na^{l}E}_{L^2}^2+ \sum_{l=1}^{N-1}\norm{\na^{l} B}_{L^2}^2.\nonumber
\end{equation}

Our first main result about the global unique solution to the system \eqref{yi} is stated as follows.
\begin{theorem}\label{existence}
Assume the initial data satisfy the compatible conditions
\begin{equation}
{\rm div}  {E}_0=\nu  \left(f(\frac{n_{10}+n_{20}}{2})-f(\frac{n_{10}-n_{20}}{2})\right),\ \ {\rm div}  {B}_0=0.\nonumber
\end{equation}
There exists a sufficiently small $\delta_0>0$ such that if $\mathcal{E}_3(0)\le \delta_0$, then there exists a unique global solution
$U(t)$ to the Euler-Maxwell system \eqref{yi} satisfying
\begin{equation}\label{energy inequality}
\sup_{0\leq t\leq \infty }\mathcal{ E}_3(t)+\int_{0}^{\infty }%
\mathcal{D}_3(\tau)\,d\tau\leq C\mathcal{ E}_3(0).
\end{equation}

Furthermore, if $\mathcal{E}_N(0)<+\infty$ for any $N\ge 3$, there exists an increasing continuous function $P_N(\cdot)$ with $P_N(0)=0$ such that the unique solution satisfies
\begin{equation}\label{energy inequality N}
\sup_{0\leq t\leq \infty }\mathcal{ E}_N(t)+\int_{0}^{\infty }%
\mathcal{D}_N(\tau)\,d\tau\leq P_N\left(\mathcal{ E}_N(0)\right).
\end{equation}
\end{theorem}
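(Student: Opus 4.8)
The plan is to establish a uniform a priori energy estimate and then combine it with the local theory through a continuity argument. Since \eqref{yi} is a symmetrizable hyperbolic system with linear relaxation, a unique smooth solution exists on a maximal time interval for data in $H^3$ by Kato \cite{K} and Jerome \cite{J}; moreover, taking the divergence of the $E$- and $B$-equations and using the compatibility of the initial data shows that the two constraints ${\rm div}\,E=\nu(f(\frac{n_1+n_2}{2})-f(\frac{n_1-n_2}{2}))$ and ${\rm div}\,B=0$ are propagated in time. The problem then reduces to proving that, on any interval $[0,T]$ on which the solution exists and satisfies the bootstrap hypothesis $\sup_{[0,T]}\mathcal{E}_3(t)\le\delta$ for a small $\delta$, one has a differential inequality of the form
$$\frac{d}{dt}\widetilde{\mathcal{E}}_N(t)+\lambda\,\mathcal{D}_N(t)\le C\sqrt{\mathcal{E}_3(t)}\,\mathcal{D}_N(t)+(\text{controllable remainders}),$$
where $\widetilde{\mathcal{E}}_N\sim\mathcal{E}_N$ is an equivalent Lyapunov functional. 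Choosing $\delta$ (hence $\delta_0$) small absorbs the first term on the right into the dissipation, and integration together with continuation closes the bootstrap and produces the global solution.

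The first ingredient is the basic energy estimate for $\na^l U$, $0\le l\le N$. Because the principal part of \eqref{yi} is symmetric, testing the $n_j$-equations against $n_j$ and the $u_j$-, $E$-, $B$-equations against the corresponding unknowns (with the weight $2$ on $\|E\|_{L^2}^2$ and $\|B\|_{L^2}^2$ forced by the coefficients $-2\nu E$ and $\nu u_2$), the density--velocity couplings $\na n_j$ and ${\rm div}\,u_j$ cancel upon integration by parts, the gyroscopic terms $u_2\times B_\infty$ and $u_1\times B_\infty$ cancel against each other, and the Maxwell curl terms pair off. Only the relaxation dissipation survives, giving
$$\frac{d}{dt}\sum_{l=0}^{N}\|\na^l U\|_{\ast}^2+\lambda\sum_{l=0}^{N}\|\na^l(u_1,u_2)\|_{L^2}^2\lesssim\ (\text{nonlinear contributions from }g_1,\dots,g_5),$$
where $\|\cdot\|_{\ast}$ denotes a weighted $L^2$ norm equivalent to $\|\cdot\|_{L^2}$.

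Since relaxation controls neither the densities, nor $E$, nor $B$, the second ingredient is a family of interaction (Kawashima-type) functionals recovering the remaining pieces of $\mathcal{D}_N$. Differentiating $\int\na^l u_1\cdot\na^{l+1}n_1$ in time and using the $u_1$-equation produces $\|\na^{l+1}n_1\|_{L^2}^2$ modulo $\|\na^{l+1}u_1\|_{L^2}^2$ and nonlinear errors; crucially this recovers only $\na n_1$ and its higher derivatives, which is exactly the \emph{degenerate} dissipation $\sum_{l=1}^N\|\na^l n_1\|_{L^2}^2$ in $\mathcal{D}_N$, because the total density $n_1$ is decoupled from the electromagnetic field. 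The analogue for $(u_2,n_2)$ gives $\|\na^l n_2\|_{L^2}^2$ for $l\ge1$, while the zeroth-order bound $\|n_2\|_{L^2}^2$ comes from the constraint ${\rm div}\,E=\nu(f(\frac{n_1+n_2}{2})-f(\frac{n_1-n_2}{2}))$, whose linear part is $\nu n_2$ since $f'(0)=1$. The functional $\int\na^l u_2\cdot\na^l E$ (with one curl integrated by parts) recovers $\|\na^l E\|_{L^2}^2$ for $0\le l\le N-1$, and the Maxwell wave structure recovers $\|\na^l B\|_{L^2}^2$ for $1\le l\le N-1$. Adding small multiples of all these functionals to the basic energy yields $\widetilde{\mathcal{E}}_N\sim\mathcal{E}_N$ and the full $\lambda\mathcal{D}_N$ on the left.

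The main obstacle is the nonlinear closing, and two features are decisive. First, the degenerate dissipation of $n_1$ means that no term carrying an undifferentiated $n_1$ can be absorbed by $\mathcal{D}_N$ unless it multiplies a genuinely dissipated factor; every such term from $g_1,\dots,g_5$ must be arranged so that $n_1$ appears against a velocity or a derivative, which is the content of the careful bookkeeping of Lemma \ref{energy lemma} and Lemma \ref{other di}. Second, the composite nonlinearity $f(\frac{n_1\pm n_2}{2})$ mixes unknowns with different dissipative structures; I would handle it by Moser-type composition and product estimates, expanding $f$ around its linear part and separating the $n_1$- and $n_2$-contributions so that the weakly dissipated $n_1$ always occurs under a gradient. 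Finally, to obtain \eqref{energy inequality N} for large $N$ \emph{without} smallness of $\mathcal{E}_N(0)$, I would close hierarchically: Gagliardo--Nirenberg interpolation places all top-order derivatives on one factor and an $L^\infty$ norm controlled by $\mathcal{E}_3$ on the other, so that only the small factor $\sqrt{\mathcal{E}_3}$ multiplies $\mathcal{D}_N$, while the remaining products of lower-order norms feed a Gronwall argument whose output is the increasing function $P_N(\mathcal{E}_N(0))$. For $N=3$ the same inequality with $\mathcal{E}_3$ small yields \eqref{energy inequality} directly.
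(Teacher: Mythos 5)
Your overall architecture --- local existence from the symmetrizable hyperbolic structure, propagation of the constraints, a basic $\na^l$ energy identity in which only the relaxation dissipation of $(u_1,u_2)$ survives, Kawashima-type interaction functionals $\int \na^l u_j\cdot\na\na^l n_j$, $\int\na^l u_2\cdot\na^l E$, $\int\na^k E\cdot\na\times\na^k B$ to recover the dissipation of $n_1$ (degenerately, from order $1$), $n_2$ (from order $0$, via the Gauss law since $f'(0)=1$), $E$ and $B$, followed by a continuity argument --- is exactly the paper's, and your identification of the two structural difficulties (the degenerate dissipation of $n_1$ and the composite nonlinearity $f(\frac{n_1\pm n_2}{2})$) is on target. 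For $N=3$ your differential inequality $\frac{d}{dt}\widetilde{\mathcal{E}}_3+\lambda\mathcal{D}_3\lesssim\sqrt{\mathcal{E}_3}\,\mathcal{D}_3$ is precisely what the paper proves, and the absorption by smallness of $\delta_0$ closes \eqref{energency inequality}\eqref{energy inequality} as you describe.

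The gap is in your treatment of \eqref{energy inequality N} for $N\ge 4$. You write that after the small factor $\sqrt{\mathcal{E}_3}\,\mathcal{D}_N$ is absorbed, ``the remaining products of lower-order norms feed a Gronwall argument whose output is the increasing function $P_N(\mathcal{E}_N(0))$.'' For Gronwall to produce a bound that is \emph{uniform in time} (note the $\sup_{0\le t\le\infty}$ and $\int_0^\infty\mathcal{D}_N$ in \eqref{energy inequality N}), the coefficient multiplying $\mathcal{E}_N$ must be integrable on $[0,\infty)$; a ``lower-order norm'' that is merely bounded (e.g.\ $\|n_1\|_{L^2}$ or $\|B\|_{L^2}$, neither of which belongs to any $\mathcal{D}_M$) would give $e^{Ct}$ growth and destroy the global bound. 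The paper's resolution has two ingredients your sketch does not supply: (i) every nonlinear term at order $N$ is estimated in the specific form $\sqrt{\mathcal{D}_{N-1}}\sqrt{\mathcal{E}_N}\sqrt{\mathcal{D}_N}$ --- i.e.\ the low-order prefactors ($\|\na n_1\|_{H^2}$, $\|(n_2,u_1,u_2)\|_{H^{k+1}\cap H^{k/2+2}\cap H^3}$, $\|\na B\|_{L^2}$) are deliberately arranged to lie in the \emph{dissipation} $\mathcal{D}_{N-1}$ rather than the energy, which requires $\frac{N-2}{2}+2\le N-1$, hence $N\ge4$; and (ii) an induction on $N$, so that the Cauchy--Schwarz step $\sqrt{\mathcal{D}_{N-1}}\sqrt{\mathcal{E}_N}\sqrt{\mathcal{D}_N}\le\varepsilon\mathcal{D}_N+C_{N,\varepsilon}\mathcal{D}_{N-1}\mathcal{E}_N$ can be closed by Gronwall using $\int_0^\infty\mathcal{D}_{N-1}\,d\tau\le P_{N-1}(\mathcal{E}_{N-1}(0))$ from the previous induction step. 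Your Gagliardo--Nirenberg splitting, as stated, places the $L^\infty$ (i.e.\ $\mathcal{E}_3$-controlled) factor against $\mathcal{D}_N$ and leaves unspecified lower-order products behind; without rearranging those products so that one factor is a genuine dissipation of order $\le N-1$, and without the induction that makes that dissipation time-integrable, the argument does not yield \eqref{energy inequality N}.
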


 In the proof of Theorem \ref{existence}, the major difficulties are caused by the degenerate dissipation for the total densities and the regularity-loss of the electromagnetic field. We will do the refined energy estimates stated in Lemma \ref{energy lemma}--\ref{other di}, which allow us to deduce
\begin{equation}
\frac{d}{dt} \mathcal{E} _3+\mathcal{D}_3\lesssim\sqrt{\mathcal{E} _3}\mathcal{D}_3\nonumber
\end{equation}
and for $N\ge 4$,
\begin{equation}
\frac{d}{dt} {\mathcal{E}}_N+ \mathcal{D}_N
 \le  C_{N } {\mathcal{D}_{N-1}} {\mathcal{E}_N}.\nonumber
\end{equation}
Then Theorem \ref{existence} follows in the fashion of \cite{G12,W12,TWW}.

Our second main result is on some various decay rates of the solution to the system \eqref{yi} by making the much stronger assumption on the initial data.
\begin{theorem}\label{decay}
Assume that $U(t)$ is the solution to the
Euler-Maxwell system \eqref{yi} constructed in Theorem \ref{existence} with $
N\geq 5$. There exists a sufficiently small $\delta_0=\delta_0(N)$ such that if $\mathcal{ {E}}_N(0)\le \delta_0$,
and assuming that $U_0\in \dot{H}^{-s}$ for some $s\in [0,3/2)$ or $U_0\in \dot{B}_{2,\infty}^{-s}$ for some $s\in (0,3/2]$,  then we have
\begin{equation}\label{H-sbound}
\norm{U(t)}_{\dot{H}^{-s}}\le C_0
\end{equation}or
\begin{equation}\label{H-sbound Besov}
\norm{U(t)}_{\dot{B}_{2,\infty}^{-s}}\le C_0.
\end{equation}
Moreover, for any fixed integer $k\ge 0$, if $N\ge 2k+2+s$, then
\begin{equation}\label{basic decay}
\norm{\na^kU(t)}_{L^2}\le  C_0 (1+ t)^{- \frac{k+s}{2} }.
\end{equation}

Furthermore, for any fixed integer $k\ge 0$, if $N\ge2k+4+s$, then
\begin{equation}\label{further decay1}
\norm{\na^k(n_2,u_1,u_2,E)(t)}_{L^2}\le  C_0 (1+ t)^{- \frac{k+1+s}{2} };
\end{equation}
if $N\ge2k+6+s$, then
\begin{equation}\label{further decay11}
\norm{\na^k n_2 (t)}_{L^2}\le  C_0 (1+ t)^{- \frac{k+2+s}{2} };
\end{equation}
if $N\ge2k+12+s$ and $ B_\infty =0$, then
\begin{equation}\label{further decay2}
\norm{\na^k (n_2,{\rm div}u_2) (t)}_{L^2}\le  C_0 (1+ t)^{- (\frac k2+\frac74+s) }.
\end{equation}
\end{theorem}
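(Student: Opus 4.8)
The plan is to follow the energy-method / negative-Sobolev interpolation scheme of \cite{G12,W12,TWW} in three stages: propagate the negative regularity in time, convert dissipation into polynomial decay by interpolation, then bootstrap along the hierarchy of components for the faster rates; the uniform bounds $\mathcal{E}_N(t)\le C_0$ and $\int_0^\infty\mathcal{D}_N\,d\tau<\infty$ from Theorem \ref{existence} fuel every estimate. First I would establish \eqref{H-sbound}: apply $\Lambda^{-s}$ to \eqref{yi}, pair with $\Lambda^{-s}U$ in $L^2$ and sum, so that the skew-symmetric linear terms (the Maxwell couplings, the $\na n_\pm$--$\diverge u_\pm$ pairings, and the $B_\infty$ rotations) cancel and the left side is $\frac12\frac{d}{dt}\norm{\Lambda^{-s}U}_{L^2}^2$ plus damping. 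The nonlinearities $g_1,\dots,g_5$ of \eqref{g}, the Lorentz terms $u_{1,2}\times B$, and the quadratic part of $\diverge E$ are controlled by the negative-index product estimate $\norm{\Lambda^{-s}(FG)}_{L^2}\lesssim\norm{F}_{H^2}\norm{\Lambda^{-s}G}_{L^2}$ (Hardy--Littlewood--Sobolev and Sobolev embedding, valid for $0\le s<3/2$), with $f(\tfrac{n_1\pm n_2}{2})$ expanded about $0$ and handled term by term as in Lemma \ref{energy lemma}. This gives $\frac{d}{dt}\norm{\Lambda^{-s}U}_{L^2}^2\lesssim\sqrt{\mathcal{E}_N}\,(\mathcal{D}_N+\norm{\Lambda^{-s}U}_{L^2}^2)$, and a Gronwall argument using $\int_0^\infty\mathcal{D}_N<\infty$ closes \eqref{H-sbound}; running the identical computation on each block $\dot\Delta_j U$ yields \eqref{H-sbound Besov} and recovers the endpoint $s=3/2$.

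For the basic decay, the regularity loss means $\mathcal{D}_N$ omits $\norm{n_1}_{L^2}$, $\norm{\na^N E}_{L^2}$ and both endpoints of $B$, so $\mathcal{D}_N$ cannot dominate $\mathcal{E}_N$ and only polynomial decay is available. I would work with $\mathcal{E}^k:=\sum_{l=k}^N\norm{\na^l U}_{L^2}^2$ and its dissipation $\mathcal{D}^k$, for which the refined estimates of Lemmas \ref{energy lemma}--\ref{other di} give $\frac{d}{dt}\mathcal{E}^k+\mathcal{D}^k\le 0$. Feeding the uniform bound \eqref{H-sbound} into the interpolation $\norm{\na^k f}_{L^2}\le\norm{\na^{k+1}f}_{L^2}^{1-\theta}\norm{\Lambda^{-s}f}_{L^2}^{\theta}$, $\theta=\frac{1}{k+1+s}$, upgrades the dissipation to $\mathcal{D}^k\gtrsim(\mathcal{E}^k)^{1+\frac{1}{k+s}}$ on the derivative range where $\mathcal{D}^k$ carries one more derivative than $\mathcal{E}^k$; the few top-order terms struck by regularity loss are absorbed as bounded (nondecaying) quantities, which is possible precisely because $N\ge 2k+2+s$ leaves enough derivatives in reserve. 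Integrating $\frac{d}{dt}\mathcal{E}^k+C(\mathcal{E}^k)^{1+\frac{1}{k+s}}\le 0$ then gives $\mathcal{E}^k\lesssim(1+t)^{-(k+s)}$, which is \eqref{basic decay}.

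Since $(n_2,u_1,u_2)$ carry full zeroth-order dissipation and $E$ dissipates up to order $N-1$, each is, at leading order, slaved to a derivative of the slowly decaying densities through the damped velocity equations and the Maxwell relation $\diverge E=\nu(f(\tfrac{n_1+n_2}{2})-f(\tfrac{n_1-n_2}{2}))$. I would insert the Stage-2 rates as forcing into refined energy identities for this good block, producing closed inequalities $\frac{d}{dt}\mathcal{G}^k+C\mathcal{G}^k\lesssim(1+t)^{-(k+1+s)}$ whose solution is the half-power gain \eqref{further decay1}; repeating with $n_2$ driven by $\diverge u_2$ yields \eqref{further decay11}. For \eqref{further decay2}, taking $B_\infty=0$ kills the rotations $u_{1,2}\times B_\infty$, so $(n_2,u_2,E,B)$ solves an effective damped one-fluid Euler--Maxwell subsystem; a dedicated interaction functional adapted to $(n_2,\diverge u_2,E)$ captures the enhanced dissipation there and, bootstrapped against all previously established rates, produces the sharp exponent $\tfrac{k}{2}+\tfrac74+s$ — hence $(1+t)^{-13/4}$ at $k=0$, $s=\tfrac32$. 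The large budget $N\ge 2k+12+s$ is the accumulated cost of the regularity loss through this multi-step bootstrap.

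The main obstacle I anticipate is the last stage under simultaneous degeneracy and loss: the dissipation is degenerate in $n_1$ and lossy in $(E,B)$, so one cannot merely shift a derivative and interpolate as in the non-degenerate model. The delicate point is organizing the bootstrap so that every forcing term decays strictly faster than its target before it is absorbed, while splitting the coupling $f(\tfrac{n_1\pm n_2}{2})$ — whose two arguments decay at different rates — consistently at each stage. This bookkeeping, rather than any single inequality, is what \eqref{further decay2} and the advertised $(1+t)^{-13/4}$ rate ultimately rest on.
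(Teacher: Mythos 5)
Your overall scheme (propagate the negative norm in time, interpolate against the dissipation, then bootstrap the well-damped components) is indeed the paper's scheme, but the step on which the whole theorem turns --- the basic decay \eqref{basic decay} under regularity loss --- does not work as you have set it up. You take the full-range energy $\mathcal{E}^k=\sum_{l=k}^N\norm{\na^l U}_{L^2}^2$ and claim $\mathcal{D}^k\gtrsim(\mathcal{E}^k)^{1+1/(k+s)}$ after ``absorbing the top-order terms struck by regularity loss as bounded quantities.'' This cannot be done: $\mathcal{E}^k$ contains $\norm{\na^N(E,B)}_{L^2}^2$, the dissipation contains at best $\norm{\na^{N-1}(E,B)}_{L^2}^2$, and a top derivative is never dominated by lower ones times bounded factors --- interpolation only runs downward. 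A summand with no established decay sitting inside $\mathcal{E}^k$ also forbids the conclusion $\mathcal{E}^k\lesssim(1+t)^{-(k+s)}$ outright. The paper's fix is to shrink the energy to three consecutive levels, $\mathcal{E}_k^{k+2}=\sum_{l=k}^{k+2}\norm{\na^lU}_{L^2}^2$, so that the only lossy term is $\norm{\na^{k+2}(E,B)}_{L^2}^2$, and then to interpolate it downward in the manner of Strain--Guo, $\norm{\na^{k+2}(E,B)}_{L^2}\le\norm{\na^{k+1}(E,B)}_{L^2}^{(N-k-2)/(N-k-1)}\norm{\na^N(E,B)}_{L^2}^{1/(N-k-1)}$, using the uniform bound on the top derivative. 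This is what produces the competing exponent and the rate $(1+t)^{-\min\{k+s,\,N-k-2\}}$, hence the precise hypothesis $N\ge 2k+2+s$; your argument never generates the second exponent, so the role of that hypothesis is left unexplained. (A further point you skip: for $k\ge2$ the right-hand side of the three-level energy inequality still carries $\norm{(n_2,u_1,u_2)}_{L^\infty}\norm{\na^{k+2}(n_1,n_2,u_1,u_2)}_{L^2}\norm{\na^{k+2}(E,B)}_{L^2}$, which is not absorbed by $\sqrt{\delta_0}\,\mathcal{D}_k^{k+2}$ and needs its own interpolation, costing the constraint $k\le\tfrac35(N-1)$.)

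Two smaller but genuine gaps. First, your negative-norm inequality $\frac{d}{dt}\norm{\Lambda^{-s}U}_{L^2}^2\lesssim\sqrt{\mathcal{E}_N}\,(\mathcal{D}_N+\norm{\Lambda^{-s}U}_{L^2}^2)$ does not close: $\sqrt{\mathcal{E}_N}$ is bounded but not time-integrable, so Gronwall yields exponential growth. The usable form (Lemma \ref{negative Sobolev norm}) is \emph{linear} in $\norm{U}_{\dot{H}^{-s}}$ with a time-integrable coefficient when $s\le 1/2$; for $s\in(1/2,3/2]$ the coefficient involves the undissipated norms $\norm{(n_1,B)}_{L^2}$, and one must first prove the theorem for small $s$ and feed the resulting decay back in --- your uniform treatment of all $s<3/2$ misses this two-step structure. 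Second, \eqref{further decay11} is not obtained by ``$n_2$ driven by $\mathrm{div}\,u_2$'': the continuity equation for $n_2$ carries no damping, so the decay of $\mathrm{div}\,u_2$ transfers nothing by itself. The actual mechanism is the Gauss law, $\norm{\na^k n_2}_{L^2}\lesssim\norm{\na^k g}_{L^2}\lesssim\norm{\na^{k+1}E}_{L^2}$ with $g$ as in \eqref{gf}, fed first by \eqref{basic decay} and then by the improved rate of $E$ from \eqref{further decay1}; the damped $(n_2,\mathrm{div}\,u_2)$ subsystem is reserved for \eqref{further decay2} and genuinely requires $B_\infty=0$ to eliminate $\mathrm{div}(u_1\times B_\infty)$.
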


In the proof of Theorem \ref{decay}, we mainly use the regularity interpolation method developed in  Strain and Guo \cite{SG06}, Guo and Wang \cite{GW} and Sohinger and Strain \cite{SS}. To prove the optimal decay rate of the
dissipative equations in the whole space, Guo and Wang \cite{GW} developed a general energy
method of using a family of scaled energy estimates with minimum derivative counts and interpolations
among them. However, this method can not be applied directly to the compressible bipolar Euler-Maxwell system which is of regularity-loss. To overcome this obstacle caused by the regularity-loss of the electromagnetic field, we deduce from Lemma \ref{energy lemma}--\ref{other di} that
\begin{equation}
\frac{d}{dt} {\mathcal{E}}_k^{k+2}+\mathcal{D}_k^{k+2}\le C_k   \norm{(n_2,u_1,u_2)}_{L^\infty}\norm{\na^{k+2}(n_1,n_2,u_1,u_2)}_{L^2}
\norm{ \na^{k+2}( E,  B )}_{L^2},\nonumber
\end{equation}
where ${\mathcal{E}}_k^{k+2}$ and $\mathcal{D}_k^{k+2}$ with minimum derivative counts are defined by \eqref{1111} and \eqref{2222} respectively.
Then combining the methods of \cite{GW,SS} and a trick of Strain and Guo \cite{SG06} to treat the electromagnetic field, we manage to conclude the decay rate \eqref{basic decay}. If in view of the whole solution, the decay rate \eqref{basic decay} can be regarded as be optimal. The higher decay rates \eqref{further decay1}--\eqref{further decay2} follow by revisiting the equations carefully. In particular, we will use a bootstrap argument to derive \eqref{further decay2}.

By Theorem \ref{decay} and Lemma \ref{Riesz lemma}--\ref{Lp embedding}, we have the following corollary of the usual $L^p$--$L^2$ type of the decay results:
\begin{Corollary}\label{2mainth}
Under the assumptions of Theorem \ref{decay} except that we replace the $\dot{H}^{-s}$ or $\Dot{B}_{2,\infty}^{-s}$ assumption  by that $U_0\in L^p$ for some $p\in [1,2]$, then for any fixed integer $k\ge 0$, if $N\ge 2k+2+ s_p $, then
\begin{equation}
\norm{\na^kU(t)}_{L^2}\le  C_0 (1+ t)^{- \frac{k+s_p}{2} }.\nonumber
\end{equation}
Here the number $s_{p} :=3\left(\frac{1}{p}-\frac{1}{2}\right)$.

Furthermore, for any fixed integer $k\ge 0$, if $N\ge2k+4+s_p$, then
\begin{equation}
\norm{\na^k(n_2,u_1,u_2,E)(t)}_{L^2}\le  C_0 (1+ t)^{- \frac{k+1+s_p}{2} };\nonumber
\end{equation}
if $N\ge2k+6+s_p$, then
\begin{equation}
\norm{\na^k n_2 (t)}_{L^2}\le  C_0 (1+ t)^{- \frac{k+2+s_p}{2} };\nonumber
\end{equation}
if $N\ge2k+12+s_p$ and $ B_\infty =0$, then
\begin{equation}\label{n L^1}
\norm{\na^k (n_2,{\rm div}u_2) (t)}_{L^2}\le  C_0 (1+ t)^{- (\frac k2+\frac74+s_p) }.
\end{equation}
\end{Corollary}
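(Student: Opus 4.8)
The plan is to deduce the corollary directly from Theorem \ref{decay} by converting the $L^p$ hypothesis on the initial data into the negative-order assumption already covered by that theorem. The key observation is the continuous embedding
\begin{equation}
L^p(\r3)\hookrightarrow \dot{B}_{2,\infty}^{-s_p}(\r3),\qquad s_p=3\Big(\frac1p-\frac12\Big),\quad 1\le p\le 2,\nonumber
\end{equation}
which is exactly what Lemma \ref{Riesz lemma}--\ref{Lp embedding} supply: for each dyadic block one has, by the Bernstein inequality and the $L^p$-boundedness of $\dot{\Delta}_j$,
\begin{equation}
\norm{\dot{\Delta}_j f}_{L^2}\lesssim 2^{3j(\frac1p-\frac12)}\norm{\dot{\Delta}_j f}_{L^p}\lesssim 2^{s_pj}\norm{f}_{L^p},\nonumber
\end{equation}
so that $\norm{f}_{\dot{B}_{2,\infty}^{-s_p}}=\sup_{j}2^{-s_pj}\norm{\dot{\Delta}_j f}_{L^2}\lesssim\norm{f}_{L^p}$. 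Hence $U_0\in L^p$ forces $U_0\in\dot{B}_{2,\infty}^{-s_p}$ with $\norm{U_0}_{\dot{B}_{2,\infty}^{-s_p}}\lesssim\norm{U_0}_{L^p}$.

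Next I would match the endpoints of $s_p$ with the two admissible ranges in Theorem \ref{decay}. As $p$ ranges over $[1,2]$, the number $s_p=3(\frac1p-\frac12)$ decreases from $3/2$ (at $p=1$) to $0$ (at $p=2$). For $p\in[1,2)$ we have $s_p\in(0,3/2]$, which lies in the admissible Besov range $s\in(0,3/2]$, so I invoke the Besov branch \eqref{H-sbound Besov} of Theorem \ref{decay} with $s=s_p$. The remaining endpoint $p=2$ gives $s_p=0$ and $U_0\in L^2=\dot{H}^0$, which is covered by the $\dot{H}^{-s}$ branch \eqref{H-sbound} with $s=0$; this is why the two branches of Theorem \ref{decay} are stated with the slightly different closed/half-open ranges. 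In either case the hypothesis $\mathcal{E}_N(0)\le\delta_0$ is unchanged, and the smallness is required only in the $H^3$ (indeed $H^N$) norm, never in the $L^p$ norm---so no smallness of $\norm{U_0}_{L^p}$ is needed, as advertised.

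With the inclusion in hand, each decay estimate of the corollary is then literally the corresponding estimate of Theorem \ref{decay} after the substitution $s\mapsto s_p$. Indeed the derivative thresholds $N\ge 2k+2+s_p$, $N\ge 2k+4+s_p$, $N\ge 2k+6+s_p$ and $N\ge 2k+12+s_p$ are obtained verbatim from $N\ge 2k+2+s$, etc., and the resulting rates $(1+t)^{-(k+s_p)/2}$, $(1+t)^{-(k+1+s_p)/2}$, $(1+t)^{-(k+2+s_p)/2}$ and $(1+t)^{-(k/2+7/4+s_p)}$ are produced by Theorem \ref{decay} with $s=s_p$. The constant $C_0$ inherits its dependence on the data through $\norm{U_0}_{\dot{B}_{2,\infty}^{-s_p}}\lesssim\norm{U_0}_{L^p}$ (or $\norm{U_0}_{L^2}$ when $p=2$). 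Since the whole content is a reduction, I expect no genuine obstacle: the only point demanding a little care is the endpoint bookkeeping just described---ensuring $p=1$ is handled by the Besov branch (to accommodate $s_p=3/2$, which the $\dot{H}^{-s}$ branch excludes) and $p=2$ by the homogeneous Sobolev branch (to accommodate $s_p=0$, which the Besov branch excludes).
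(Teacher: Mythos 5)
Your proposal is correct and is essentially the paper's own argument: the corollary is obtained by combining Theorem \ref{decay} with the embeddings of Lemma \ref{Riesz lemma} and Lemma \ref{Lp embedding} to convert $U_0\in L^p$ into the negative-order hypothesis with $s=s_p$. The only (immaterial) difference is the branch assignment for intermediate $p$: the paper routes $p\in(1,2]$ through $L^p\subset\dot{H}^{-s_p}$ and reserves the Besov embedding for the endpoint $p=1$, whereas you route $p\in[1,2)$ through $\dot{B}_{2,\infty}^{-s_p}$ and reserve $\dot{H}^{0}=L^2$ for $p=2$; both splits cover all of $[1,2]$ and yield identical conclusions.
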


The followings are several remarks on Theorem \ref{existence}--\ref{decay} and Corollary \ref{2mainth}.

\begin{Remark}
In Theorem \ref{existence}, we only assume that the initial data is small in the $H^3$ norm but the higher order derivatives could be large.
Notice that in Theorem \ref{decay} the $\dot{H}^{-s}$ and $\dot{B}_{2,\infty}^{-s}$ norms of the solution are preserved along the time evolution, however, in Corollary \ref{2mainth} it is difficult to show that the $L^p$ norm of the solution can be preserved.
Note that the $L^2$ decay rate of the higher order spatial derivatives of the solution
is obtained. Then the general optimal $L^q$ $(2\le q\le \infty)$ decay rates of the solution follow by the Sobolev
interpolation.
\end{Remark}

\begin{Remark}
In Theorem \ref{decay}, the space $\Dot{H}^{-s}$ or $\Dot{B}_{2,\infty}^{-s}$ was introduced there to enhance the decay rates.
By the usual embedding theorem, we know that for $p\in (1,2]$, $L^p\subset \Dot{H}^{-s}$ with $s=3(\frac{1}{p}-\frac{1}{2})\in[0,3/2)$. Meantime, we note that the endpoint embedding  $L^1\subset \Dot{B}_{2,\infty}^{-\frac{3}{2}}$ holds. Hence the $L^p$--$L^2(1\le p\le2)$ type of the optimal decay results follows as a corollary.
\end{Remark}

\begin{Remark}
We remark that Corollary \ref{2mainth} not only provides an alternative approach to derive the $L^p$--$L^2$ type
of the optimal decay results but also improves the previous results of the  $L^p$--$L^2$ approach in Duan et al. \cite{DLZ}.
In Duan et al. \cite{DLZ}, assuming that $B_\infty=0$ and $\norm{U_0}_{L^1}$ is sufficiently small, by combining the energy method and the linear decay analysis, Duan proved that
\begin{equation}
\norm{n_2 (t)}_{L^2}\le  C_0 (1+ t)^{- \frac{5}{2} },\quad \norm{(u_1,u_2,E)(t)}_{L^2}\le  C_0 (1+ t)^{- \frac{5}{4} } \text{ and } \norm{(n_1,B)(t)}_{L^2}\le  C_0 (1+ t)^{- \frac{3}{4} }.\nonumber
\end{equation}
Notice that for $p=1$, our decay rate of $n_2(t)$ is $(1+t)^{-13/4}$ in \eqref{n L^1}.
\end{Remark}

The rest of our paper is structured as follows. In section \ref{section2}, we establish the refined energy estimates for the solution and derive the negative Sobolev and Besov estimates. Theorem \ref{existence} and Theorem \ref{decay} are proved in section \ref{section3}.

%%%%%%%%%%%%%%%%%%%%%%%%%%%%%%%%%%%%%%%%%%%%%%%%%%%%%%%%%%%%%%%%%%%%%%%%%%%%%
\section{Nonlinear energy estimates}\label{section2}
%%%%%%%%%%%%%%%%%%%%%%%%%%%%%%%%%%%%%%%%%%%%%%%%%%%%%%%%%%%%%%%%%%%%%%%%%%%%%

In this section, we will do the a priori estimate by assuming that $\norm{n_\pm(t)}_{H^3}\le \delta\ll 1$. Recall the expression \eqref{f} of $f(n_\pm)$ and $\eqref{n1 n2}$. Then by Taylor's formula and Sobolev's inequality, we have
\begin{equation}\label{fn}
f\left(\frac{n_1\pm n_2}{2}\right)\sim \frac{n_1\pm n_2}{2}\hbox{ and } \norms{f^{(k)}\left(\frac{n_1\pm n_2}{2}\right)} \le C_k\hbox{ for any }k\ge 1.
\end{equation}

%%%%%%%%%%%%%%%%%%%%%%%%%%%%%%%%%%%%%%%%%%%%%%%%%%%%%%%%%%%%%%%%%%%%%%%%%%%%%
\subsection{Preliminary}
%%%%%%%%%%%%%%%%%%%%%%%%%%%%%%%%%%%%%%%%%%%%%%%%%%%%%%%%%%%%%%%%%%%%%%%%%%%%%

In this subsection, we collect some analytic tools used later in this paper.

 \begin{lemma}\label{A1}
Let $2\le p\le +\infty$ and $\alpha,m,\ell\ge 0$. Then we have
\begin{equation}
\norm{\na^\alpha f}_{L^p}\le C_{p} \norm{ \na^mf}_{L^2}^{1-\theta}
\norm{ \na^\ell f}_{L^2}^{\theta}.\nonumber
\end{equation}
Here $0\le \theta\le 1$ (if $p=+\infty$, then we require that $0<\theta<1$) and $\alpha$ satisfies
\begin{equation}
\alpha+3\left(\frac12-\frac{1}{p}\right)=m(1-\theta)+\ell\theta.\nonumber
\end{equation}
\end{lemma}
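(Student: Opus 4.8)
The statement to prove is the Gagliardo--Nirenberg--Sobolev interpolation inequality in Lemma \ref{A1}:
\[
\norm{\na^\alpha f}_{L^p}\le C_p \norm{\na^m f}_{L^2}^{1-\theta}\norm{\na^\ell f}_{L^2}^\theta,
\]
with the scaling constraint $\alpha + 3(\tfrac12 - \tfrac1p) = m(1-\theta) + \ell\theta$.

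The plan is to work entirely on the Fourier side, since all the norms involved are $L^2$-based except the target $L^p$ norm, and to reduce the $L^p$ estimate to an $L^2$ (i.e.\ $\dot H^s$) estimate via Sobolev embedding. First I would handle the endpoint $p = \infty$ and then the range $2 \le p < \infty$ by a unified device. The key reduction is: by the Sobolev embedding $\dot H^{s} \hookrightarrow L^p$ valid for $s = 3(\tfrac12 - \tfrac1p) \in [0, 3/2)$ (and with the usual care at $p=\infty$, which is why the statement excludes $\theta \in \{0,1\}$ there), it suffices to prove the homogeneous Sobolev-norm interpolation
\[
\norm{\na^\alpha f}_{\dot H^{s}} = \norm{\na^{\alpha + s} f}_{L^2} \le C \norm{\na^m f}_{L^2}^{1-\theta}\norm{\na^\ell f}_{L^2}^\theta,
\]
where now $\alpha + s = m(1-\theta) + \ell\theta$ is exactly the scaling relation. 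This last inequality is pure $L^2$ interpolation.

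The heart of the argument is therefore the elementary homogeneous interpolation $\norm{\na^{\beta} f}_{L^2} \le \norm{\na^m f}_{L^2}^{1-\theta}\norm{\na^\ell f}_{L^2}^\theta$ for $\beta = m(1-\theta)+\ell\theta$ with $\theta \in [0,1]$. I would prove this directly from Plancherel: writing everything as $\int |\xi|^{2\gamma}|\hat f(\xi)|^2\,d\xi$, the exponent relation gives $|\xi|^{2\beta} = (|\xi|^{2m})^{1-\theta}(|\xi|^{2\ell})^\theta$ pointwise, and then Hölder's inequality in the measure $|\hat f|^2\,d\xi$ with conjugate exponents $\tfrac{1}{1-\theta}$ and $\tfrac1\theta$ yields
\[
\int |\xi|^{2\beta}|\hat f|^2\,d\xi \le \Big(\int |\xi|^{2m}|\hat f|^2\,d\xi\Big)^{1-\theta}\Big(\int |\xi|^{2\ell}|\hat f|^2\,d\xi\Big)^{\theta},
\]
which is exactly the squared inequality. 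Taking square roots finishes this step.

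The main obstacle, and the step requiring the most care, is the Sobolev embedding $\dot H^{s}\hookrightarrow L^p$ with the sharp constant-free constraint $s = 3(\tfrac12-\tfrac1p)$, together with the borderline behavior at $p=\infty$. For $2 \le p < \infty$ this is the standard Hardy--Littlewood--Sobolev / Besov embedding with $s \in [0,3/2)$, and I would simply invoke it. For $p = \infty$ the naive embedding $\dot H^{3/2}\hookrightarrow L^\infty$ fails, so the restriction $0 < \theta < 1$ in the hypothesis is essential: it lets me avoid placing all weight at a single frequency scale. Concretely, for $p=\infty$ I would split $\hat f$ into low and high frequencies at a radius $R$, bound $\norm{f}_{L^\infty} \lesssim \int |\hat f|\,d\xi$, apply Cauchy--Schwarz on each piece against $|\xi|^{2m}$ and $|\xi|^{2\ell}$ respectively (using $\alpha$ together with $s$ in the dyadic weights), and then optimize in $R$; the condition $0<\theta<1$ guarantees both integrals converge and the optimization produces exactly the claimed product of norms with the stated exponents. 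Combining this endpoint analysis with the finite-$p$ embedding and the $L^2$ interpolation above completes the proof.
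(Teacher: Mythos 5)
Your argument is correct and complete in outline: the Plancherel--H\"older interpolation of the $L^2$-based seminorms along the exponent relation, the reduction to the homogeneous Sobolev embedding $\dot H^{s}\hookrightarrow L^{p}$ with $s=3(\tfrac12-\tfrac1p)$ for $2\le p<\infty$, and the low/high frequency splitting of $\widehat{f}$ with optimization in the cut-off radius (where $0<\theta<1$ is exactly what makes both pieces finite) for $p=\infty$ constitute the standard proof of this inequality. The paper itself gives no proof but defers to Lemma A.1 of \cite{GW} for $2\le p<+\infty$ and Exercise 6.1.2 of \cite{Gla} for $p=+\infty$, and those references proceed by precisely the steps you describe, so your proposal is a faithful self-contained version of the paper's outsourced argument.
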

\begin{proof}
For the case $2\le p<+\infty$, we refer to Lemma A.1 in \cite{GW}; for the case $p=+\infty$, we refer to Exercise 6.1.2 in \cite{Gla} (pp. 421).
\end{proof}
\begin{lemma}\label{A2}
For any integer $k\ge0$, we have
\begin{equation}\label{fkn0}
\norm{\na^kf(n)}_{L^\infty} \le C_k\norm{\na^{k+1} n}
_{L^2}^{1/2}\norm{\na^{k+2}n}_{L^2} ^{1/2},
\end{equation}
and
\begin{equation}
\norm{\na^kf(n)}_{L^2}\le C_k\norm{\na^kn}_{L^2}.\nonumber
\end{equation}
\end{lemma}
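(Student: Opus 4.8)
The statement to prove is Lemma \ref{A2}, which gives two estimates on derivatives of $f(n)$:
\begin{equation}
\norm{\na^kf(n)}_{L^\infty} \le C_k\norm{\na^{k+1} n}_{L^2}^{1/2}\norm{\na^{k+2}n}_{L^2}^{1/2},\nonumber
\end{equation}
and $\norm{\na^kf(n)}_{L^2}\le C_k\norm{\na^kn}_{L^2}$.

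The plan is to reduce everything to the Gagliardo--Nirenberg--Sobolev interpolation of Lemma \ref{A1} together with the pointwise bounds \eqref{fn} on the derivatives of $f$. The key structural fact is that $f$ is smooth with $f(0)=0$, so by the composition (Faà di Bruno) formula, $\na^k f(n)$ is a finite sum of terms of the form $f^{(m)}(n)\,\prod_{i=1}^{m}\na^{\beta_i}n$, where $m\ge 1$ and $\beta_1+\cdots+\beta_m = k$ with each $\beta_i\ge 1$. Crucially there is no ``zeroth-order'' term because at least one derivative must fall somewhere; and under the smallness assumption $\norm{n}_{H^3}\le\delta\ll1$ together with \eqref{fn}, each factor $\norms{f^{(m)}(n)}$ is bounded by a constant $C_m$. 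So both estimates amount to controlling products $\prod_i \na^{\beta_i}n$ in the appropriate norm.

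First I would prove the $L^\infty$ bound \eqref{fkn0}. After expanding $\na^k f(n)$ by Faà di Bruno and bounding the smooth coefficient $f^{(m)}(n)$ by $C_m$, I would estimate $\norm{\prod_{i=1}^m \na^{\beta_i}n}_{L^\infty}$ by distributing the $L^\infty$ norm across factors: put one factor (say the highest-order $\na^{\beta_1}n$) in $L^\infty$ and the remaining $m-1$ factors in $L^\infty$ as well, or more efficiently split as $L^\infty\times L^\infty\times\cdots$; the cleanest route is to apply Lemma \ref{A1} to each factor $\na^{\beta_i}n$ with $p=\infty$, interpolating between $\na^{k+1}n$ and $\na^{k+2}n$ in $L^2$. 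For a single factor $\na^{\beta}n$ with $\beta\le k$, Lemma \ref{A1} gives $\norm{\na^\beta n}_{L^\infty}\le C\norm{\na^{k+1}n}_{L^2}^{1-\theta}\norm{\na^{k+2}n}_{L^2}^\theta$ for the appropriate $\theta\in(0,1)$ determined by $\beta+3/2 = (k+1)(1-\theta)+(k+2)\theta$. The bookkeeping to check is that the exponents of $\norm{\na^{k+1}n}_{L^2}$ and $\norm{\na^{k+2}n}_{L^2}$ collected from all $m$ factors each sum to exactly $1/2$; this follows because the total ``derivative budget'' $\sum_i\beta_i = k$ and each interpolation contributes its share, so that the homogeneity in $n$ balances across the product. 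Since $n$ appears to the total power $m$, and the norms $\na^{k+1}n,\na^{k+2}n$ scale like derivatives of $n$, the smallness $\norm{n}_{H^3}\le\delta$ absorbs the extra factors of $n$ for $m\ge 2$, leaving exactly the claimed product of two half-powers.

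For the $L^2$ estimate I would argue similarly but more simply: expand $\na^k f(n)$, bound $f^{(m)}(n)$ by $C_m$, and for each product $\na^{\beta_1}n\cdots\na^{\beta_m}n$ place the highest-order factor in $L^2$ and the rest in $L^\infty$, using Lemma \ref{A1} to bound the $L^\infty$ factors by $H^3$-controlled quantities (hence by $\delta$) and to bound $\norm{\na^{\beta_1}n}_{L^2}$ by $\norm{\na^k n}_{L^2}$ through interpolation. The linear term $m=1$ gives exactly $f'(n)\na^k n$, which by \eqref{fn} is bounded in $L^2$ by $C\norm{\na^k n}_{L^2}$ and furnishes the main term; all higher ($m\ge 2$) terms carry at least one extra factor of $n$ in $L^\infty$ norm and are thus controlled by $\delta\norm{\na^k n}_{L^2}$, which is subsumed into $C_k\norm{\na^k n}_{L^2}$.

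The main obstacle I anticipate is purely combinatorial rather than analytic: verifying that in the $L^\infty$ estimate the interpolation exponents from the Faà di Bruno decomposition always recombine to give precisely the exponents $1/2$ and $1/2$ on $\norm{\na^{k+1}n}_{L^2}$ and $\norm{\na^{k+2}n}_{L^2}$, uniformly over all admissible partitions $\beta_1+\cdots+\beta_m=k$. The cleanest way to handle this is to note the scaling/homogeneity constraint: each application of Lemma \ref{A1} is scale-consistent, so one can simply track total homogeneity degree, and the smallness assumption $\delta\ll1$ takes care of any mismatch in the number of factors. Everything else is a routine application of Sobolev interpolation and the bounds \eqref{fn}.
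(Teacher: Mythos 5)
The paper does not actually prove this lemma; it simply cites Lemma 2.2 of \cite{TWW}. So the only question is whether your argument stands on its own, and there is a genuine gap in the key interpolation step. After the Fa\`a di Bruno expansion $\na^k f(n)=\sum f^{(m)}(n)\prod_{i=1}^m\na^{\beta_i}n$ with $\sum\beta_i=k$, you propose, for the $L^\infty$ bound, to apply Lemma \ref{A1} to \emph{each} factor $\na^{\beta_i}n$ with $p=\infty$ interpolating between $\na^{k+1}n$ and $\na^{k+2}n$ in $L^2$, and you assert that the resulting exponents sum to $1/2$ and $1/2$. Neither claim survives a computation: the required exponent is $\theta_i=\beta_i-k+\tfrac12$, which equals $1/2$ only when $\beta_i=k$ (i.e.\ $m=1$) and is $\le -1/2$, hence inadmissible in Lemma \ref{A1}, for every factor of every term with $m\ge2$; and even formally $\sum_i\theta_i=k-m(k-\tfrac12)\ne\tfrac12$ for $m\ge 2$. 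Your stated escape hatch --- ``the smallness $\norm{n}_{H^3}\le\delta$ absorbs the extra factors'' --- cannot work as described, because in your scheme every factor is sent to the pair $(\na^{k+1}n,\na^{k+2}n)$, so no factor ever lands in an $H^3$-controlled norm for the smallness to act on. The mismatch is in the exponent of the \emph{high} norms, and smallness of low norms cannot repair a high-norm exponent different from $1$ (there is no Poincar\'e inequality on $\r3$, and $\norm{\na^{k+2}n}_{L^2}^{b}$ with $b\ne1$ is not comparable to $\norm{\na^{k+2}n}_{L^2}$). The same defect appears in your $L^2$ argument: putting the top factor in $L^2$ and interpolating it down to $\na^k n$ yields $\norm{\na^kn}_{L^2}^{\beta_1/k}$ with $\beta_1/k<1$ when $m\ge2$, which is not bounded by $C\norm{\na^kn}_{L^2}$.

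The fix is to choose \emph{different} interpolation endpoints for different factors, arranged so that the total exponent on the high norms is exactly $1$ (split as $\tfrac12+\tfrac12$ between $\na^{k+1}n$ and $\na^{k+2}n$ in the $L^\infty$ case, or exactly $1$ on $\na^kn$ in the $L^2$ case), with the surplus homogeneity dumped onto low-order $L^2$ norms $\norm{\na^{j}n}_{L^2}$, $0\le j\le 3$, which are bounded by $\delta$. For instance, for $k=2$, $m=2$ one writes
\begin{equation}
\norm{\na n}_{L^\infty}^2\le C\norm{\na^2n}_{L^2}^{1/2}\norm{\na^3n}_{L^2}^{1/2}\cdot\norm{\na n}_{L^2}^{1/2}\norm{\na^4n}_{L^2}^{1/2}\le C\delta\,\norm{\na^3n}_{L^2}^{1/2}\norm{\na^4n}_{L^2}^{1/2},\nonumber
\end{equation}
where both applications of Lemma \ref{A1} have admissible exponent $\theta=1/2$. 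A cleaner uniform route is the $L^\infty$ Gagliardo--Nirenberg chain $\norm{\na^{\beta_i}n}_{L^\infty}\le C\norm{n}_{L^\infty}^{1-\beta_i/k}\norm{\na^kn}_{L^\infty}^{\beta_i/k}$, which gives $\prod_i\norm{\na^{\beta_i}n}_{L^\infty}\le C\norm{n}_{L^\infty}^{m-1}\norm{\na^kn}_{L^\infty}\le C\delta^{m-1}\norm{\na^{k+1}n}_{L^2}^{1/2}\norm{\na^{k+2}n}_{L^2}^{1/2}$, though note this uses an $L^\infty$--$L^\infty$ interpolation that is not literally Lemma \ref{A1}. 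Until one of these balancing devices is made explicit, the proof is incomplete for every term with $m\ge2$, i.e.\ for all $k\ge2$.
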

\begin{proof}
See Lemma 2.2 in \cite{TWW}.
\end{proof}
We recall the following commutator estimate:
\begin{lemma}\label{commutator}
Let $k\ge 1$ be an integer and define the commutator
\begin{equation}
\left[\na^k,g\right]h=\na^k(gh)-g\na^kh.\nonumber
\end{equation}
Then we have
\begin{equation}
\norm{\left[\na^k,g\right]h}_{L^2} \le C_k\left( \norm{\na g}_{L^\infty}
\norm{\na^{k-1}h}_{L^2}+\norm{\na^k g}_{L^2}\norm{ h}_{L^\infty}\right),\nonumber
\end{equation}and
\begin{equation}
\norm{\na^k(gh)}_{L^2} \le C_k\left( \norm{g}_{L^\infty}
\norm{\na^{k}h}_{L^2}+\norm{\na^k g}_{L^2}\norm{ h}_{L^\infty}\right).\nonumber
\end{equation}
\end{lemma}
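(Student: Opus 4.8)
The plan is to prove both inequalities by the same three-step template: expand via the Leibniz rule, split each resulting product with Hölder's inequality, and then interpolate each factor by Gagliardo--Nirenberg (Lemma \ref{A1} together with its $L^\infty$-endpoint companions) before collapsing the product into a two-term sum with Young's inequality. I would dispatch the second (product) estimate first, since it is slightly cleaner, and then adapt the bookkeeping to the commutator.

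For the product estimate I would write, by the Leibniz rule,
\[
\na^k(gh)=\sum_{j=0}^{k}\binom{k}{j}\na^j g\,\na^{k-j}h .
\]
The extreme terms are already of the desired shape: $\norm{g\na^k h}_{L^2}\le \norm{g}_{L^\infty}\norm{\na^k h}_{L^2}$ and $\norm{\na^k g\, h}_{L^2}\le \norm{\na^k g}_{L^2}\norm{h}_{L^\infty}$. For each interior term $1\le j\le k-1$ (occurring only when $k\ge 2$) I would apply Hölder with the pair $p_j=2k/j$, $q_j=2k/(k-j)$, so that $1/p_j+1/q_j=1/2$, followed by the Gagliardo--Nirenberg inequalities
\[
\norm{\na^j g}_{L^{2k/j}}\lesssim \norm{g}_{L^\infty}^{1-j/k}\norm{\na^k g}_{L^2}^{j/k},\qquad
\norm{\na^{k-j} h}_{L^{2k/(k-j)}}\lesssim \norm{h}_{L^\infty}^{j/k}\norm{\na^k h}_{L^2}^{1-j/k}.
\]
Their product is the geometric mean $\big(\norm{g}_{L^\infty}\norm{\na^k h}_{L^2}\big)^{1-j/k}\big(\norm{\na^k g}_{L^2}\norm{h}_{L^\infty}\big)^{j/k}$, which Young's inequality turns into the claimed sum. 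Summing over $j$ and absorbing the binomial coefficients into $C_k$ completes this half.

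For the commutator, the same Leibniz expansion gives $[\na^k,g]h=\sum_{j=1}^{k}\binom{k}{j}\na^j g\,\na^{k-j}h$, the $j=0$ term having cancelled against $g\na^k h$. The essential difference is the choice of interpolation endpoints: since every surviving term carries at least one derivative on $g$, I would interpolate $\na^j g$ between $\na g\in L^\infty$ and $\na^k g\in L^2$ (i.e. apply Gagliardo--Nirenberg to $\na g$), and interpolate $\na^{k-j}h$ between $h\in L^\infty$ and $\na^{k-1}h\in L^2$. Setting $a=(j-1)/(k-1)$ and $b=(k-j)/(k-1)$, the Hölder exponents $1/p_j=a/2$, $1/q_j=b/2$ again sum to $1/2$, and the identity $a+b=1$ is precisely what makes the product $\norm{\na g}_{L^\infty}^{1-a}\norm{\na^k g}_{L^2}^{a}\norm{h}_{L^\infty}^{1-b}\norm{\na^{k-1}h}_{L^2}^{b}$ collapse, via Young, into $\norm{\na g}_{L^\infty}\norm{\na^{k-1}h}_{L^2}+\norm{\na^k g}_{L^2}\norm{h}_{L^\infty}$. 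The boundary indices $j=1$ and $j=k$ need no interpolation (they land directly on the two target terms), and the degenerate case $k=1$, where $k-1=0$ forbids the interpolation, I would settle by hand since then $[\na,g]h=(\na g)h$.

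The main obstacle is not any single estimate but the exponent bookkeeping: one must check simultaneously that the Hölder pair is admissible ($1/p_j+1/q_j=1/2$) and that the interpolation weights obey the balance relation ($a+b=1$ for the commutator, and the matching $1-j/k$ versus $j/k$ split for the product), so that Young's inequality yields exactly the two prescribed terms with no residual mixed factor. A secondary subtlety is that the Gagliardo--Nirenberg inequalities invoked above carry $L^\infty$ on the right-hand side, which is the $L^\infty$-endpoint form of the interpolation rather than the purely $L^2$-based statement of Lemma \ref{A1}; I would supply these from the classical Gagliardo--Nirenberg inequality in that endpoint form.
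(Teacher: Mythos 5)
Your proof is correct, and it is essentially the paper's own argument: the paper does not write out a proof but simply cites Lemma 3.4 of \cite{MB}, whose proof is exactly the Leibniz--H\"older--Gagliardo-Nirenberg--Young scheme you describe (with the same $L^\infty$-endpoint interpolation inequalities that go beyond the purely $L^2$-based form of Lemma \ref{A1}). Your exponent bookkeeping checks out, including the treatment of the boundary indices and the degenerate case $k=1$.
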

\begin{proof}
It can be proved by using Lemma \ref{A1}, see Lemma 3.4 in \cite{MB} (pp. 98) for instance.
\end{proof}
Notice that when using the commutator estimate in this paper, we usually will not consider the case that $k=0$ since it is trivial.

We have the $L^p$ embeddings:
\begin{lemma}\label{Riesz lemma}
Let $0\le s<3/2,\ 1<p\le 2$ with $1/2+s/3=1/p$, then
\begin{equation}
\norm{ f}_{\dot{H}^{-s}}\lesssim\norm{ f}_{L^p}.\nonumber
\end{equation}
\end{lemma}
\begin{proof}
It follows from the Hardy-Littlewood-Sobolev theorem, see \cite{Gla}.
\end{proof}

\begin{lemma}\label{Lp embedding}
Let $0< s\le 3/2,\ 1\le p<2$ with $1/2+s/3=1/p$, then
\begin{equation}
\norm{f}_{\dot{B}_{2,\infty}^{-s}}\lesssim\norm{f}_{L^p}.\nonumber
\end{equation}
\end{lemma}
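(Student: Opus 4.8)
The statement to prove is Lemma \ref{Lp embedding}: for $0 < s \le 3/2$ and $1 \le p < 2$ with $1/2 + s/3 = 1/p$, we have $\norm{f}_{\dot{B}_{2,\infty}^{-s}} \lesssim \norm{f}_{L^p}$.

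This is the endpoint Besov analog of the Hardy–Littlewood–Sobolev embedding from Lemma \ref{Riesz lemma}. The crucial difference is that the $\dot{B}_{2,\infty}^{-s}$ norm is a supremum over dyadic blocks rather than an $\ell^2$-type sum, which is exactly what allows us to reach the endpoint $p=1$, $s=3/2$ that is excluded from the $\dot{H}^{-s}$ result. Let me think about the approach.

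The plan is to work block-by-block with the Littlewood–Paley decomposition and estimate a single frequency annulus uniformly in $j$. By definition,
\[
\norm{f}_{\dot{B}_{2,\infty}^{-s}} = \sup_{j \in \mathbb{Z}} 2^{-sj} \norm{\dot{\Delta}_j f}_{L^2},
\]
so it suffices to bound $2^{-sj}\norm{\dot{\Delta}_j f}_{L^2}$ by $\norm{f}_{L^p}$ with a constant independent of $j$. First I would recall that $\dot{\Delta}_j f = \mathscr{F}^{-1}(\varphi_j) * f$ where $\varphi_j(\xi) = \varphi(2^{-j}\xi)$ is supported in the annulus $|\xi| \sim 2^j$. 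The key tool is the Hausdorff–Young inequality combined with a scaling computation for the kernel $\mathscr{F}^{-1}(\varphi_j)$.

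The main steps I would carry out are as follows. Since $p \le 2 \le p'$ with $1/p + 1/p' = 1$, I would apply Young's convolution inequality in the form $\norm{\dot{\Delta}_j f}_{L^2} = \norm{\mathscr{F}^{-1}(\varphi_j) * f}_{L^2} \le \norm{\mathscr{F}^{-1}(\varphi_j)}_{L^r}\norm{f}_{L^p}$ where $1 + 1/2 = 1/r + 1/p$, i.e. $1/r = 3/2 - 1/p = 1/2 - s/3$, so $r = (1/2 - s/3)^{-1} \in [1,2]$ precisely because $0 < s \le 3/2$. The next and central step is the scaling estimate for $\norm{\mathscr{F}^{-1}(\varphi_j)}_{L^r}$. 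Writing $\psi := \mathscr{F}^{-1}(\varphi)$, the scaling $\varphi_j(\xi) = \varphi(2^{-j}\xi)$ gives $\mathscr{F}^{-1}(\varphi_j)(x) = 2^{3j}\psi(2^j x)$, hence $\norm{\mathscr{F}^{-1}(\varphi_j)}_{L^r} = 2^{3j}\cdot 2^{-3j/r}\norm{\psi}_{L^r} = 2^{3j(1-1/r)}\norm{\psi}_{L^r}$. Since $\varphi \in C_c^\infty$ away from the origin, $\psi$ is Schwartz and all its $L^r$ norms are finite constants. Combining, I get $\norm{\dot{\Delta}_j f}_{L^2} \lesssim 2^{3j(1-1/r)}\norm{f}_{L^p}$, and the exponent simplifies: $3(1 - 1/r) = 3(1 - 1/2 + s/3) = 3/2 + s - 3\cdot\tfrac12 + \ldots$; more cleanly, $1 - 1/r = 1 - (1/2 - s/3) = 1/2 + s/3$, so $3(1-1/r) = 3/2 + s$, giving $\norm{\dot{\Delta}_j f}_{L^2} \lesssim 2^{(3/2 + s - \text{(contribution)})j}$. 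I need to be careful here: the exponent should come out to exactly $sj$ after accounting for the Hausdorff–Young scaling, so I would recompute using the sharper Bernstein-type bound rather than Young directly.

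The cleaner route, which I expect to be the main point to get right, is to use the Bernstein inequality on the annulus: for a function with Fourier support in $|\xi| \sim 2^j$, one has $\norm{\dot{\Delta}_j f}_{L^2} \lesssim 2^{3j(1/p - 1/2)}\norm{\dot{\Delta}_j f}_{L^p} \lesssim 2^{3j(1/p-1/2)}\norm{f}_{L^p}$, where the last inequality uses that $\dot{\Delta}_j$ is bounded on $L^p$ uniformly in $j$. Since $3(1/p - 1/2) = s$ by the hypothesis $1/2 + s/3 = 1/p$, this yields exactly $2^{-sj}\norm{\dot{\Delta}_j f}_{L^2} \lesssim \norm{f}_{L^p}$ with a constant independent of $j$, and taking the supremum over $j$ finishes the proof. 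The anticipated obstacle is purely bookkeeping: verifying the uniform-in-$j$ boundedness of $\dot{\Delta}_j$ on $L^p$ (again via the scaling $\mathscr{F}^{-1}(\varphi_j)(x) = 2^{3j}\psi(2^j x)$ and $\norm{\mathscr{F}^{-1}(\varphi_j)}_{L^1} = \norm{\psi}_{L^1}$ being scale-invariant) and confirming the Bernstein exponent matches $s$ exactly; both are standard, so I would cite a harmonic analysis reference such as \cite{Gla} for the Bernstein inequality and present the scaling computation in a line or two.
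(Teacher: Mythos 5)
Your proof is correct: the paper itself gives no argument for this lemma, simply citing Lemma 4.6 of \cite{SS}, and your block-wise estimate $\norm{\dot{\Delta}_j f}_{L^2}\lesssim 2^{3j(1/p-1/2)}\norm{\dot{\Delta}_j f}_{L^p}\lesssim 2^{sj}\norm{f}_{L^p}$ (Bernstein on the annulus plus uniform $L^p$-boundedness of $\dot{\Delta}_j$ via the scale-invariance of $\norm{\mathscr{F}^{-1}(\varphi_j)}_{L^1}$), followed by taking the supremum over $j$, is exactly the standard proof of that cited result and correctly exploits the $\ell^\infty$ summation to reach the endpoint $p=1$, $s=3/2$. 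The only blemish is the arithmetic slip in your first, direct-Young attempt: there $1/r=3/2-1/p=1-s/3$ (not $1/2-s/3$), so $3(1-1/r)=s$ and that route in fact also closes with the correct exponent; your switch to Bernstein was therefore unnecessary, though harmless.
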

\begin{proof}
See Lemma 4.6 in \cite{SS}.
\end{proof}

It is important to use the following special interpolation estimates:
\begin{lemma}\label{1-sinte}
Let $s\ge 0$ and $\ell\ge 0$, then we have
\begin{equation}
\norm{\na^\ell f}_{L^2}\le \norm{\na^{\ell+1} f}_{L^2}^{1-\theta}%
\norm{ f}_{\dot{H}^{-s}}^\theta, \hbox{ where }\theta=\frac{1}{\ell+1+s}.\nonumber
\end{equation}
\end{lemma}
\begin{proof}
It follows directly by the Parseval theorem  and H\"older's
inequality.
\end{proof}

\begin{lemma}\label{Besov interpolation}
Let $s> 0$ and $\ell\ge 0$, then we have
\begin{equation}
\norm{\na^\ell f}_{L^2}\le \norm{\na^{\ell+1} f}_{L^2}^{1-\theta}
\norm{ f}_{\dot{B}^{-s}_{2,\infty}}^\theta, \hbox{ where }\theta=\frac{1}{\ell+1+s}.\nonumber
\end{equation}
\end{lemma}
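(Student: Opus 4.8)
The final statement to prove is Lemma~\ref{Besov interpolation}, the interpolation inequality
\[
\norm{\na^\ell f}_{L^2}\le \norm{\na^{\ell+1} f}_{L^2}^{1-\theta}\norm{ f}_{\dot{B}^{-s}_{2,\infty}}^\theta,\qquad \theta=\frac{1}{\ell+1+s},
\]
for $s>0$ and $\ell\ge 0$. The plan is to work on the Fourier side and split frequency space into low and high frequencies at a threshold to be optimized. The key point is that the Besov norm $\norm{f}_{\dot{B}^{-s}_{2,\infty}}$ controls the low frequencies (where the weight $|\xi|^\ell$ is small) while the homogeneous norm $\norm{\na^{\ell+1}f}_{L^2}$ controls the high frequencies (where the extra derivative gains decay). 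I expect this to mirror the proof of the preceding Lemma~\ref{1-sinte}, with the main adjustment being that the negative Besov norm is a supremum over dyadic blocks rather than a genuine $L^2$-type integral, so the low-frequency estimate must be summed dyadically.

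First I would use the Littlewood--Paley decomposition $f=\sum_{j\in\Z}\dot\Delta_j f$ and write, by Plancherel,
\[
\norm{\na^\ell f}_{L^2}^2\sim\sum_{j\in\Z}2^{2\ell j}\norm{\dot\Delta_j f}_{L^2}^2.
\]
Fix an integer cutoff $J$ and split the sum into $j\le J$ and $j>J$. For the low-frequency part $j\le J$, I would bound $2^{2\ell j}\norm{\dot\Delta_j f}_{L^2}^2=2^{2(\ell+s)j}\bigl(2^{-sj}\norm{\dot\Delta_j f}_{L^2}\bigr)^2\le 2^{2(\ell+s)j}\norm{f}_{\dot{B}^{-s}_{2,\infty}}^2$, using the definition of the Besov norm to replace each block by the supremum; summing the geometric series $\sum_{j\le J}2^{2(\ell+s)j}\lesssim 2^{2(\ell+s)J}$ (here $s>0$ guarantees $\ell+s>0$ so the series converges) gives a contribution $\lesssim 2^{2(\ell+s)J}\norm{f}_{\dot{B}^{-s}_{2,\infty}}^2$. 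For the high-frequency part $j>J$, I would write $2^{2\ell j}\norm{\dot\Delta_j f}_{L^2}^2=2^{-2j}\cdot 2^{2(\ell+1)j}\norm{\dot\Delta_j f}_{L^2}^2\le 2^{-2J}\sum_{j>J}2^{2(\ell+1)j}\norm{\dot\Delta_j f}_{L^2}^2\lesssim 2^{-2J}\norm{\na^{\ell+1}f}_{L^2}^2$.

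Combining the two pieces yields
\[
\norm{\na^\ell f}_{L^2}^2\lesssim 2^{2(\ell+s)J}\norm{f}_{\dot{B}^{-s}_{2,\infty}}^2+2^{-2J}\norm{\na^{\ell+1}f}_{L^2}^2.
\]
I would then optimize over the real parameter $J$ (choosing $2^J$ so the two terms balance, i.e. $2^{2(\ell+1+s)J}\sim\norm{\na^{\ell+1}f}_{L^2}^2/\norm{f}_{\dot{B}^{-s}_{2,\infty}}^2$), which produces exactly the exponent $\theta=1/(\ell+1+s)$ and gives the claimed product bound; since $J$ ranges over a discrete set, choosing the nearest integer only costs a harmless constant factor. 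The main obstacle, though minor, is the bookkeeping that the stated inequality has constant $1$ rather than an implicit constant: a clean way to avoid this is to avoid Littlewood--Paley blocks altogether and instead integrate directly against the radial variable. Concretely, one splits $\int|\xi|^{2\ell}|\hat f|^2\,d\xi$ over $\{|\xi|\le R\}$ and $\{|\xi|>R\}$; on the high-frequency set $|\xi|^{2\ell}\le R^{-2}|\xi|^{2(\ell+1)}$, while on the low-frequency set one bounds $|\xi|^{2\ell}|\hat f|^2=|\xi|^{2(\ell+s)}\cdot|\xi|^{-2s}|\hat f|^2$ and uses that the dyadic supremum definition of $\norm{f}_{\dot{B}^{-s}_{2,\infty}}$ controls $\int_{|\xi|\le R}|\xi|^{2(\ell+s)}\cdot(\text{Besov density})$; optimizing $R$ then reproduces the result as in Lemma~\ref{1-sinte}. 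Because the structure is identical to the already-proved $\dot H^{-s}$ case, I would simply record that the proof follows by the same Parseval-and-splitting argument with the $\dot H^{-s}$ integral replaced by the $\dot{B}^{-s}_{2,\infty}$ supremum.
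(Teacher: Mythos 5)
Your argument is correct, and it is worth noting that the paper itself gives no proof at all here: it simply cites Lemma 4.5 of Sohinger--Strain \cite{SS}. Your high/low frequency splitting at a dyadic threshold $J$, with the geometric series $\sum_{j\le J}2^{2(\ell+s)j}$ (convergent precisely because $\ell+s>0$) absorbing the low frequencies into $\norm{f}_{\dot{B}^{-s}_{2,\infty}}$ and the factor $2^{-2J}$ extracting the extra derivative at high frequencies, followed by optimization in $J$, is exactly the standard proof and essentially what appears in the cited reference; so you have supplied the argument the paper outsources. One point you raise deserves emphasis: unlike the $\dot{H}^{-s}$ case (Lemma \ref{1-sinte}), where H\"older's inequality on the Fourier side gives the bound with constant exactly $1$, the Besov version cannot avoid a multiplicative constant, because the supremum over dyadic blocks must be paired with a summed geometric series (and with the almost-orthogonality constants of the Littlewood--Paley decomposition). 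Your proposed ``direct integration'' workaround does not remove this, since controlling the low-frequency integral by the $\dot{B}^{-s}_{2,\infty}$ supremum still requires summing over dyadic shells. The clean resolution is that the lemma as used in the paper only ever needs the inequality up to a universal constant (every application produces a $C_0$ anyway), and indeed the source \cite{SS} states it with $\lesssim$; the constant-free form printed here should be read as carrying an implicit constant.
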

\begin{proof}
See Lemma 4.5 in \cite{SS}.
\end{proof}

%%%%%%%%%%%%%%%%%%%%%%%%%%%%%%%%%%%%%%%%%%%%%%%%%%%%%%%%%%%%%%%%%%%%%%%%%%%%%
\subsection{Energy estimates}
%%%%%%%%%%%%%%%%%%%%%%%%%%%%%%%%%%%%%%%%%%%%%%%%%%%%%%%%%%%%%%%%%%%%%%%%%%%%%

In this subsection, we will derive the basic energy estimates for the solution to the Euler-Maxwell system \eqref{yi}. We begin with the standard energy estimates.
\begin{lemma}\label{energy lemma}
For any integer $k\ge0$, we have
\begin{eqnarray}\label{energy 1}
&&\frac{d}{dt}\sum_{l=k}^{k+2}\norm{\na^{l}U}_{L^2}^2 +\la\sum_{l=k}^{k+2}\norm{\na^{l} (u_1,u_2)}_{L^2}^2 \nonumber\\
&&\quad\lesssim C_k F
\left(\sum_{l=k+1}^{k+2}\norm{\na^{l} n_1}_{L^2}^2+\sum_{l=k}^{k+2}\norm{\na^{l} (n_2, u_1,u_2 )}_{L^2}^2+\sum_{l=k}^{k+1}\norm{\na^{l}E}_{L^2}^2+\norm{\na^{k+1} B}_{L^2}^2\right)\nonumber\\
&&\qquad+\norm{(n_2,u_1,u_2)}_{L^\infty}\norm{\na^{k+2} (n_2,u_1,u_2)}_{L^2} \norm{\na^{k+2} ( E,  B )}_{L^2},
\end{eqnarray}
where $F$ is defined by
$$F=F(n_1,n_2,u_1,u_2,B):=\norm{\na n_1}_{H^2}+\norm{ (n_2,u_1,u_2)}_{ H^{k+1}\cap H^{\frac{k}{2}+2}\cap H^3}+\norm{\na B}_{L^2}.$$
\end{lemma}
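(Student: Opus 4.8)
The plan is to perform standard $L^2$ energy estimates on the system \eqref{yi} at each derivative level $l$ with $k \le l \le k+2$, sum the three levels, and track carefully the terms coming from the degenerate dissipation of $n_1$ and from the regularity-loss of the electromagnetic field. For each fixed $l$, I would apply $\na^l$ to each of the six equations in \eqref{yi}, multiply the density equations by $\na^l n_1$, $\na^l n_2$, the velocity equations by $\na^l u_1$, $\na^l u_2$, the $E$-equation by $\na^l E$, the $B$-equation by $\na^l B$, and integrate over $\r3$. The linear leading-order terms are designed so that, upon summation, the coupling terms cancel in pairs: the curl terms $-\nu\na\times B$ and $\nu\na\times E$ cancel after integration by parts, the relaxation-free cross terms $\mp u_\pm\times B_\infty$ cancel against each other, the pressure-gradient/divergence pairs $\na n_i$ and ${\rm div}\,u_i$ cancel via integration by party, and the $-2\nu E$ term in the $u_2$-equation pairs with the $\nu u_2$ term in the $E$-equation together with the Gauss law ${\rm div}\,E=\nu(f(\tfrac{n_1+n_2}{2})-f(\tfrac{n_1-n_2}{2}))$. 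What survives from the relaxation terms $\nu u_1$, $\nu u_2$ is the positive dissipation $\la\sum_l \norm{\na^l(u_1,u_2)}_{L^2}^2$ appearing on the left-hand side, which is the only genuinely good term produced at this stage.

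The heart of the argument is the estimation of the nonlinear right-hand sides $g_1,\dots,g_5$ defined in \eqref{g}, together with the Lorentz-force terms $u_2\times B$ and $u_1\times B$ and the Gauss-law contribution. For the generic quadratic nonlinearities I would use the commutator and product estimates of Lemma \ref{commutator} to move derivatives, then Lemma \ref{A1} and Lemma \ref{A2} to distribute $L^2$ and $L^\infty$ norms; the goal is to bound each term by $C_k\, F$ times the grouped quantity in parentheses on the right of \eqref{energy 1}. The factor $F$ is precisely the $L^\infty$-type smallness factor collecting $\norm{\na n_1}_{H^2}$, the mixed norm of $(n_2,u_1,u_2)$, and $\norm{\na B}_{L^2}$, so that when $F$ is small these terms are absorbable. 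The nonlinear function $f(\tfrac{n_1\pm n_2}{2})$ must be handled using \eqref{fn}, writing $f$ as linear-plus-higher-order and exploiting $f(\tfrac{n_1+n_2}{2})-f(\tfrac{n_1-n_2}{2})\sim n_2$ so that the Gauss-law term inherits the good $n_2$-dissipation rather than the degenerate $n_1$-structure.

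The genuinely delicate point, and the one I expect to be the main obstacle, is the appearance of the top-order electromagnetic norm $\norm{\na^{k+2}(E,B)}_{L^2}$, which is \emph{not} controlled by the dissipation $\mathcal{D}$ at this level because of the regularity-loss phenomenon. This forces the last line of \eqref{energy 1}: the worst nonlinear contributions, which involve two top derivatives landing on $(n_2,u_1,u_2)$ and on $(E,B)$ respectively, cannot be absorbed into the parenthetical group and must instead be displayed as the cubic remainder $\norm{(n_2,u_1,u_2)}_{L^\infty}\norm{\na^{k+2}(n_2,u_1,u_2)}_{L^2}\norm{\na^{k+2}(E,B)}_{L^2}$. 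Concretely, these come from the Lorentz terms $u_2\times B$, $u_1\times B$ (when all derivatives fall on $B$) and from the $g_5$ and Gauss-law terms (when derivatives fall on $E$); here one places $\na^{k+2}$ on the electromagnetic factor, $\na^{k+2}$ on the fluid factor after a commutator, and the low-order factor in $L^\infty$ via Lemma \ref{A1}. The care required is to ensure that in every such term the two top-order factors are exactly $(n_2,u_1,u_2)$ and $(E,B)$ and never $n_1$ at top order with $(E,B)$, since the $n_1$-dissipation is degenerate; this is where the equivalence \eqref{fn} and the separate dissipative structures of $n_1$ and $n_2$ must be tracked term by term. Finally, summing over $l=k,k+1,k+2$ and collecting all estimates yields \eqref{energy 1}.
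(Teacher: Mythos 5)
Your plan follows the paper's proof essentially verbatim: the same $\na^l$ energy identity for $l=k,k+1,k+2$, the same cancellation of the linear coupling terms, the same use of Lemmas \ref{A1}, \ref{A2} and \ref{commutator} to distribute derivatives while landing on $\na^{l+1}n_1$ rather than $\na^l n_1$, and the same identification of the extreme terms in the Lorentz-force and $g_5$ contributions (all $k+2$ derivatives falling on $(E,B)$ or on the fluid factor) as the source of the cubic remainder, with the crucial observation that $f(\frac{n_1+n_2}{2})-f(\frac{n_1-n_2}{2})\sim n_2$ keeping $n_1$ out of that remainder. The only small inaccuracy is that in this lemma the $-2\nu E$/$\nu u_2$ pair cancels directly (thanks to the factor $2$ in front of the electromagnetic energy), without invoking the Gauss law, which is only needed later for the dissipative estimate of $n_2$ in Lemma \ref{other di}.
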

\begin{proof}
The standard $\na^l$ ($l=k,k+1,k+2$) energy estimates on the system $\eqref{yi}$ yield
\begin{eqnarray}\label{I1-I3}
&&\frac{1}{2}\frac{d}{dt}\int \norms{\na^{l}(n_1,n_2,u_1,u_2)}^2+\frac{d}{dt}\int \norms{\na^{l}(E,B)}^2   +\nu  \norm{\na^l (u_1,u_2)}_{L^2}^2 \nonumber\\
&&\quad=\int \na^l g_1\na^ln_1+\na^lg_2\cdot\na^l u_1
+ \na^l g_3\na^ln_2+ \na^lg_4\cdot\na^l u_2 \nonumber\\
&&\qquad+\int \na^l (u_2\times B)\cdot\na^l u_1+ \na^l (u_1\times B)\cdot\na^l u_2+2\nu \int \na^{l} g_5\cdot\na^{l} E \nonumber\\
&&\quad:=I_1+I_2+2\nu I_3.
\end{eqnarray}

We now estimate $I_1\sim I_3$. First, by \eqref{g}, we split $I_1$ as:
\begin{eqnarray}\label{I1}
I_1&&=-\frac12\int \na^l\left(u_1\cdot\na n_1\right)\na^ln_1+\na^l\left(u_1\cdot\na u_1\right)\cdot\na^lu_1\nonumber\\
&&\quad-\frac12\int \na^l\left(u_1\cdot\na n_2\right)\na^ln_2+\na^l\left(u_1\cdot\na u_2\right)\cdot\na^lu_2\nonumber\\
&&\quad-\frac12\int \na^l\left(u_2\cdot\na n_2\right)\na^ln_1+\na^l\left(u_2\cdot\na n_1\right)\na^ln_2\nonumber\\
&&\quad-\frac12\int \na^l\left(u_2\cdot\na u_2\right)\cdot\na^lu_1+\na^l\left(u_2\cdot\na u_1\right)\cdot\na^lu_2\nonumber\\
&&\quad-\frac{\mu}{2}\int \na^l\left(n_1{\rm div}u_1\right)\na^ln_1+\na^l\left(n_1\na n_1\right)\cdot\na^lu_1\nonumber\\
&&\quad-\frac{\mu}{2}\int \na^l\left(n_1{\rm div}u_2\right)\na^ln_2+\na^l\left(n_1\na n_2\right)\cdot\na^lu_2\nonumber\\
&&\quad-\frac{\mu}{2}\int \na^l\left(n_2{\rm div}u_2\right)\na^ln_1+\na^l\left(n_2\na n_1\right)\cdot\na^lu_2\nonumber\\
&&\quad-\frac{\mu}{2}\int \na^l\left(n_2{\rm div}u_1\right)\na^ln_2+\na^l\left(n_2\na n_2\right)\cdot\na^lu_1\nonumber\\
&&:=\frac12\left(I_{11}+I_{12}+I_{13}+I_{14}\right)+\frac{\mu}{2}\left(I_{15}+I_{16}+I_{17}+I_{18}\right).
\end{eqnarray}

We shall estimate the eight terms on the right-hand side of \eqref{I1}. We must be careful about these terms involving $n_1$ since $n_1$ is degenerately dissipative. First we estimate $I_{11}$. We have to distinct the arguments by the value of $l$. For $l=k$ or $k+1$, we have
\begin{eqnarray}  \label{Iyyyi}
-\int \na^l\left(u_1\cdot\na n_1\right)\na^ln_1&&=-\int\sum_{0\le \ell\le
l}C_{l}^\ell\na^{l-\ell} u_{1}\cdot\na\na^{\ell}n_{1}
\na^{l}n_{1} \nonumber\\
&&\lesssim\sum_{0\le \ell\le
l}\norm{\na^{l-\ell} u_{1}\cdot\na\na^{\ell}n_{1}}_{L^{6/5}}\norm{\na^ln_{1}}_{L^6}\nonumber\\
&&\lesssim\sum_{0\le \ell\le
l}\norm{\na^{l-\ell} u_{1}\cdot\na\na^{\ell}n_{1}}_{L^{6/5}}\norm{\na^{l+1}n_{1}}_{L^2}.
\end{eqnarray}
If $0\le \ell\le \left[\frac l2\right]$, by H\"older's inequality and Lemma A.1, we have
\begin{eqnarray}\label{Iyyer}
\norm{\na^{l-\ell} u_{1}\cdot\na\na^{\ell}n_{1}}_{L^{6/5}}&&\lesssim \norm{\na^{l-\ell} u_{1}}_{L^2} \norm{\na^{\ell+1}n_{1}}_{L^3}\nonumber
\\&&\lesssim \norm{u_{1}}^{\frac \ell l}_{L^2}\norm{\na^lu_{1}}_{L^2}^{1-\frac{\ell}{l}} \norm{\na^\alpha n_{1}}_{L^2}^{1-\frac{\ell}{l}}\norm{\na^{l+1}n_{1}}_{L^2}^{\frac \ell l}\nonumber\\
&&\lesssim \left(\norm{\na n_1}_{H^2}+\norm{u_1}_{H^3}\right)\left(\norm{\na^{l+1}n_{1}}_{L^2}+\norm{\na^{l} u_{1}}_{L^2}\right),
\end{eqnarray}
where $\alpha$ is defined by
\begin{equation}
\begin{split}
\ell+\frac{3}{2}=\alpha\times\left(1-\frac{\ell}{l}\right)
+(l+1)\times \frac{\ell}{l} \Longrightarrow \alpha=\frac{3l-2\ell}{2l-2\ell}\in \left[\frac{3}{2},3\right) \, \text{ since }\ell\le \frac{l}{2};\nonumber
\end{split}
\end{equation}
if $\left[\frac l2\right]+1\le \ell\le l$, by H\"older's inequality and Lemma A.1 again, we have
\begin{eqnarray}\label{Iyysi}
\norm{\na^{l-\ell} u_{1}\cdot\na\na^{\ell}n_{1}}_{L^{6/5}}&&\lesssim \norm{\na^{l-\ell} u_{1}}_{L^3} \norm{\na^{\ell+1}n_{1}}_{L^2}\nonumber
\\&&\lesssim \norm{\na^\alpha u_{1}}^{\frac {\ell}{l}}_{L^2}\norm{\na^lu_{1}}_{L^2}^{\frac{l-\ell}{l}} \norm{\na n_{1}}_{L^2}^{\frac{l-\ell}{l}}\norm{\na^{l+1}n_{1}}_{L^2}^{\frac {\ell}{l}}\nonumber\\
&&\lesssim \left(\norm{\na n_1}_{H^2}+\norm{u_1}_{H^3}\right)\left(\norm{\na^{l+1}n_{1}}_{L^2}+\norm{\na^{l} u_{1}}_{L^2}\right),
\end{eqnarray}
where $\alpha$ is defined by
\begin{equation}
\begin{split}
l-\ell+\frac{1}{2}=\alpha\times\frac{\ell}{l}
+l-\ell \Longrightarrow \alpha=\frac{l}{2\ell}\in \left[\frac{1}{2},3\right) \, \text{ since }\ell\ge \frac{l+1}{2}.\nonumber
\end{split}
\end{equation}
In light of \eqref{Iyyer} and \eqref{Iyysi}, we deduce from \eqref{Iyyyi} that for $l=k$ or $k+1$,
\begin{equation}\label{Iyyliu}
-\int \na^l\left(u_1\cdot\na n_1\right)\na^ln_1\lesssim\left(\norm{\na n_1}_{H^2}+\norm{u_1}_{H^3}\right)\left(\norm{\na^{l+1}n_{1}}_{L^2}^2 +\norm{\na^{l}u_{1}}_{L^2}^2 \right).
\end{equation}
Now for $l=k+2$, by integrating by parts and Lemma \ref{commutator}, we have
\begin{eqnarray}  \label{Iyyqi}
-\int\na^{k+2} (u_1\cdot\na n_1)\na^{k+2}n_1\nonumber
&&=-\int\left[\na^{k+2},u_1\right]\cdot\na n_1\na^{k+2}n_1-\int u_1\cdot\na\na^{k+2} n_1\na^{k+2}n_1 \\\nonumber
&&\lesssim\left(\norm{\na u_1}_{L^\infty}\norm{\na^{k+2}n_1}_{L^2}+\norm{\na^{k+2}u_1}_{L^2}\norm{\na n_1}_{L^\infty}\right)\norm{\na^{k+2}n_1}_{L^2}\\\nonumber
&&\quad-\frac12\int u_1\cdot\na\Big(\na^{k+2} n_1\na^{k+2}n_1 \Big) \\\nonumber
&&\lesssim\norm{\na (n_1,u_1)}_{L^\infty}\norm{\na^{k+2}(n_1,u_1)}_{L^2}^2+\frac12\int{\rm div} u_1\left|\na^{k+2} n_1\right|^2\\
&&\lesssim\left(\norm{\na n_1}_{H^2}+\norm{u_1}_{H^3}\right) \norm{\na^{k+2}(n_1,u_1)}_{L^2}^2.
\end{eqnarray}
On the other hand, like \eqref{Iyyqi}, we have for $l=k$, $k+1$, $k+2$,
\begin{eqnarray}\label{Iyyba}
-\int \na^l\left(u_1\cdot\na u_1\right)\cdot\na^lu_1&&=-\int \left(u_1\cdot\na\na^lu_1+\left[\na^l,u_1\right]\cdot\na u_1\right)\cdot\na^lu_1\nonumber\\
&&=-\int \frac12u_1\cdot\na\left(\na^lu_1\cdot\na^lu_1\right)+\left[\na^l,u_1\right]\cdot\na u_1\cdot\na^lu_1\nonumber\\
&&\lesssim\norm{\na u_1}_{L^\infty}\norm{\na^lu_1}_{L^2}^2.
\end{eqnarray}
Hence, by \eqref{Iyyliu}--\eqref{Iyyba}, we have for $l=k$, $k+1$,
\begin{equation}
\begin{split}
I_{11}\lesssim\left(\norm{\na n_1}_{H^2}+\norm{u_1}_{H^3}\right)\left(\norm{\na^{l+1}n_1}_{L^2}^2+\norm{\na^lu_1}_{L^2}^2\right),\nonumber
\end{split}
\end{equation}
and for $l=k+2$,
\begin{equation}
\begin{split}
I_{11}\lesssim\left(\norm{\na n_1}_{H^2}+\norm{u_1}_{H^3}\right)\norm{\na^{k+2}(n_1,u_1)}_{L^2}^2.\nonumber
\end{split}
\end{equation}
Like \eqref{Iyyqi}, we have for $l=k$, $k+1$, $k+2$,
\begin{equation}
\begin{split}
I_{12}\lesssim\norm{(n_2,u_1,u_2)}_{H^3}\norm{\na^l(n_2,u_1,u_2)}_{L^2}^2,\
I_{14}\lesssim\norm{(u_1,u_2)}_{H^3}\norm{\na^{l}(u_1,u_2)}_{L^2}^2.\nonumber
\end{split}
\end{equation}
As in \eqref{Iyyyi}--\eqref{Iyyqi}, we have for $l=k$, $k+1$,
\begin{equation}
\begin{split}
I_{13}\lesssim\left(\norm{\na n_1}_{H^2}+\norm{(n_2,u_2)}_{H^3}\right)\left(\norm{\na^{l+1}(n_1,n_2)}_{L^2}^2+\norm{\na^lu_1}_{L^2}^2\right),\nonumber
\end{split}
\end{equation}
and for $l=k+2$,
\begin{equation}
\begin{split}
I_{13}\lesssim\left(\norm{\na n_1}_{H^2}+\norm{(n_2,u_2)}_{H^3}\right)\norm{\na^{k+2}(n_1,n_2,u_1)}_{L^2}^2.\nonumber
\end{split}
\end{equation}

We next estimate the term $I_{15}$. For $l=k$ or $k+1$, we split $I_{15}$ as:
\begin{eqnarray}  \label{Iywyi}
I_{15}&&=-\int\na^l (n_1\mathrm{div}u_1)\na^ln_1+\na^l(n_1\na n_1)\cdot\na^l u_1\nonumber\\
&&=-\int\sum_{0\le \ell\le
l}C_{l}^\ell\Big(\na^{l-\ell}n_1
\na^{\ell}{\rm div}u_1\na^{l}n_1+\na^{l-\ell}n_1
\na^{\ell+1}n_1\cdot\na^{l}u_1\Big) \nonumber\\
&&=-\int\sum_{0\le \ell\le
l-1}C_{l}^\ell\na^{l-\ell} n_1\na^{\ell}{\rm div}u_1
\na^{l}n_1 -\int\sum_{0\le \ell\le
l-1}C_{l}^\ell\na^{l-\ell} n_1\na^{\ell+1}n_1\cdot
\na^{l}u_1\nonumber\\
&&\quad -\int n_1{\rm div}\na^{l} u_1\na^ln_1+ n_1 \na^{l+1} n_1\cdot\na^l u_1\nonumber\\
&&:=I_{151}+I_{152}+I_{153}.
\end{eqnarray}
First we estimate $I_{153}$. By H\"older's, Sobolev's and Cauchy's inequalities, we obtain
\begin{eqnarray}  \label{Iywer}
I_{153}&&=-\int n_1{\rm div}\na^{l}u_1\na^ln_1+ n_1 \na^{l+1} n_1\cdot\na^l u_1=-\int n_1{\rm div}\Big(\na^{l} u_1\na^ln_1\Big)=\int\na n_1\na^{l}u_1\na^ln_1\nonumber\\
&&\lesssim\norm{\na n_1}_{L^3}\norm{\na^{l}u_1}_{L^2}\norm{\na^{l}n_1}_{L^6}\lesssim \norm{\na n_1}_{H^2} \left(\norm{\na^{l+1}n_1}_{L^2}^2+\norm{\na^{l} u_1}_{L^2}^2\right).
\end{eqnarray}
Next we estimate the term $I_{151}$. By H\"older's and Sobolev's inequalities, we obtain
\begin{eqnarray}  \label{Iywsan}
I_{151}&&=-\int\sum_{0\le \ell\le
l-1}C_{l}^\ell\na^{l-\ell} n_1\na^{\ell}{\rm div} u_1
\na^{l}n_1 \lesssim\sum_{0\le \ell\le
l-1}\norm{\na^{l-\ell} n_1\na^{\ell}{\rm div} u_1}_{L^{6/5}}\norm{\na^ln_1}_{L^6}\nonumber\\
&&\lesssim\sum_{0\le \ell\le
l-1}\norm{\na^{l-\ell} n_1\na^{\ell}{\rm div} u_1}_{L^{6/5}}\norm{\na^{l+1}n_1}_{L^2}.
\end{eqnarray}
If $0\le \ell\le \left[\frac l2\right]$, by H\"older's inequality and Lemma A.1, we have
\begin{eqnarray}\label{Iywsi}
\norm{\na^{l-\ell} n_1\na^{\ell}{\rm div} u_1}_{L^{6/5}}&&\lesssim \norm{\na^{l-\ell} n_1}_{L^3} \norm{\na^{\ell+1}u_1}_{L^2}\nonumber
\\&&\lesssim \norm{\na n_1}^{\frac {2\ell+1}{2l}}_{L^2}\norm{\na^{l+1}n_1}_{L^2}^{\frac{2l-2\ell-1}{2l}} \norm{\na^\alpha u_1}_{L^2}^{\frac{2l-2\ell-1}{2l}}\norm{\na^{l}u_1}_{L^2}^{\frac {2\ell+1}{2l}}\nonumber\\
&&\lesssim \left(\norm{\na n_1}_{L^2}+\norm{u_1}_{H^3}\right)\left(\norm{\na^{l+1}n_1}_{L^2}+\norm{\na^{l} u_1}_{L^2}\right),
\end{eqnarray}
where $\alpha$ is defined by
\begin{equation}
\begin{split}
\ell+1=\alpha\times\frac{2l-2\ell-1}{2l}
+\frac {2\ell+1}{2} \Longrightarrow \alpha=\frac{l}{2l-2\ell-1}\in \left(\frac{1}{2},3\right) \, \text{ since }\ell\le \frac{l}{2};\nonumber
\end{split}
\end{equation}
if $\left[\frac l2\right]+1\le \ell\le l-1$, by H\"older's inequality and Lemma A.1 again, we have
\begin{eqnarray}\label{Iywliu}
\norm{\na^{l-\ell} n_1\na^{\ell}{\rm div} u_1}_{L^{6/5}}&&\lesssim \norm{\na^{l-\ell} n_1}_{L^3} \norm{\na^{\ell+1}u_1}_{L^2}\nonumber
\\&&\lesssim \norm{\na^\alpha n_1}^{\frac {\ell+1}{l}}_{L^2}\norm{\na^{l+1}n_1}_{L^2}^{\frac{l-\ell-1}{l}} \norm{u_1}_{L^2}^{\frac{l-\ell-1}{l}}\norm{\na^{l}u_1}_{L^2}^{\frac {\ell+1}{l}}\nonumber\\
&&\lesssim \left(\norm{\na n_1}_{H^2}+\norm{u_1}_{H^3}\right)\left(\norm{\na^{l+1}n_1}_{L^2}+\norm{\na^{l} u_1}_{L^2}\right),
\end{eqnarray}
where $\alpha$ is defined by
\begin{equation}
\begin{split}
&l-\ell+\frac{1}{2}=\alpha\times\frac{\ell+1}{l}
+(l+1)\times \frac{l-\ell-1}{l}
\\ &\quad \Longrightarrow \alpha=1+\frac{l}{2\ell+2}\in \left[\frac{3}{2},3\right) \, \text{ since }\ell\ge \frac{l+1}{2}.\nonumber
\end{split}
\end{equation}
In light of \eqref{Iywsi} and \eqref{Iywliu}, we deduce from \eqref{Iywsan} that
\begin{equation}\label{Iywba}
I_{151}\lesssim\left(\norm{\na n_1}_{H^2}+\norm{u_1}_{H^3}\right)\left(\norm{\na^{l+1}n_1}_{L^2}^2 +\norm{\na^{l}u_1}_{L^2}^2 \right).
\end{equation}
Finally, we estimate the term $I_{152}$. By H\"older's inequality, we obtain
\begin{equation}  \label{Iywjiu}
\begin{split}
I_{152}=-\int\sum_{0\le \ell\le
l-1}C_{l}^\ell\na^{l-\ell} n_1\na^{\ell+1}n_1\cdot
\na^{l}u_1  \lesssim\sum_{0\le \ell\le
l-1}\norm{\na^{l-\ell} n_1\na^{\ell+1}n_1}_{L^{2}}\norm{\na^{l}u_1}_{L^2}.
\end{split}
\end{equation}
If $0\le \ell\le \left[\frac l2\right]$, by H\"older's inequality and Lemma A.1, we have
\begin{eqnarray}\label{Iywshi}
\norm{\na^{l-\ell} n_1\na^{\ell+1}n_1}_{L^{2}}&&\lesssim \norm{\na^{l-\ell} n_1}_{L^6} \norm{\na^{\ell+1}n_1}_{L^3}\nonumber
\\&&\lesssim \norm{\na n_1}^{\frac {\ell}{l}}_{L^2}\norm{\na^{l+1}n_1}_{L^2}^{\frac{l-\ell}{l}} \norm{\na^\alpha n_1}_{L^2}^{\frac{l-\ell}{l}}\norm{\na^{l+1}n_1}_{L^2}^{\frac{\ell}{l}}\nonumber\\
&&\lesssim \norm{\na n_1}_{H^2}\norm{\na^{l+1} n_1}_{L^2},
\end{eqnarray}
where $\alpha$ is defined by
\begin{equation}
\begin{split}
\ell+\frac{3}{2}=\alpha\times\frac{l-\ell}{l}
+(l+1)\times\frac{\ell}{l} \Longrightarrow \alpha=\frac{3l-2\ell}{2l-2\ell}\in \left[\frac{3}{2},3\right) \, \text{ since }\ell\le \frac{l}{2};\nonumber
\end{split}
\end{equation}
if $\left[\frac l2\right]+1\le \ell\le l-1$, by H\"older's inequality and Lemma A.1 again, we have
\begin{eqnarray}\label{Iywshier}
\norm{\na^{l-\ell} n_1\na^{\ell+1}n_1}_{L^{2}}&&\lesssim \norm{\na^{l-\ell} n_1}_{L^3} \norm{\na^{\ell+1}n_1}_{L^6}\nonumber
\\&&\lesssim \norm{\na^\alpha n_1}^{\frac {\ell}{l-1}}_{L^2}\norm{\na^{l+1}n_1}_{L^2}^{1-\frac{\ell}{l-1}} \norm{\na^2n_1}_{L^2}^{1-\frac{\ell}{l-1}}\norm{\na^{l+1}n_1}_{L^2}^{\frac {\ell}{l-1}}\nonumber\\
&&\lesssim \norm{\na n_1}_{H^2}\norm{\na^{l+1} n_1}_{L^2},
\end{eqnarray}
where $\alpha$ is defined by
\begin{equation}
\begin{split}
&l-\ell+\frac{1}{2}=\alpha\times\frac {\ell}{l-1}
+(l+1)\times \left(1-\frac{\ell}{l-1}\right)
\\ &\quad \Longrightarrow \alpha=2+\frac{-l+1}{2\ell}\in \left[\frac{3}{2},3\right) \, \text{ since }\ell\ge \frac{l+1}{2}.\nonumber
\end{split}
\end{equation}
In light of \eqref{Iywshi} and \eqref{Iywshier}, we deduce from \eqref{Iywjiu} that
\begin{equation}\label{Iywshisi}
I_{152}\lesssim\norm{\na n_1}_{H^2}\left(\norm{\na^{l+1}n_1}_{L^2}^2 +\norm{\na^{l}u_1}_{L^2}^2 \right).
\end{equation}
Hence, by \eqref{Iywer}, \eqref{Iywba} and \eqref{Iywshisi}, we deduce from \eqref{Iywyi} that for $l=k,k+1$,
\begin{equation}
I_{15}\lesssim\left(\norm{\na n_1}_{H^2}+\norm{u_1}_{H^3}\right)\left(\norm{\na^{l+1}n_1}_{L^2}^2 +\norm{\na^{l}u_1}_{L^2}^2 \right).\nonumber
\end{equation}
For $l=k+2$, like \eqref{Iyyqi}, we have
\begin{equation}
\begin{split}
I_{15}&=-\int\na^{k+2}(n_1{\rm div} u_1)\na^{k+2}n_1+\na^{k+2}(n_1\na n_1)\cdot\na^{k+2} u_1 \\
&=-\int\left[\na^{k+2},n_1\right]{\rm div} u_1\na^{k+2}n_1+\left[\na^{k+2},n_1\right]\na n_1\cdot\na^{k+2}u_1\\
&\quad-\int n_1{\rm div}\na^{k+2} u_1\na^{k+2}n_1 +n_1\na\na^{k+2} n_1\cdot\na^{k+2}u_1 \\
&\lesssim\left(\norm{\na n_1}_{L^\infty}\norm{\na^{k+2} u_1}_{L^2}+\norm{\na^{k+2}n_1}_{L^2}\norm{\na u_1}_{L^\infty}\right)\norm{\na^{k+2}n_1}_{L^2}\\
&\quad-\int n_1{\rm div}\Big(\na^{k+2} u_1\na^{k+2}n_1\Big) \\
&\lesssim\norm{\na (n_1,u_1)}_{L^\infty}\norm{\na^{k+2}(n_1,u_1)}_{L^2}^2+\int \na n_1\na^{k+2} u_1\na^{k+2}n_1 \\
&\lesssim\left(\norm{\na n_1}_{H^2}+\norm{u_1}_{H^3}\right) \norm{\na^{k+2}(n_1,u_1)}_{L^2}^2.\nonumber
\end{split}
\end{equation}
Applying the same arguments to these terms $I_{16}$--$I_{18}$, we deduce that for $l=k$ or $k+1$,
\begin{equation}
\begin{split}
I_{16}+I_{17}+I_{18}\lesssim\left(\norm{\na n_1}_{H^2}+\norm{(n_2,u_1,u_2)}_{H^3}\right)\left(\norm{\na^{l+1}(n_1,n_2)}_{L^2}^2+\norm{\na^l(u_1,u_2)}_{L^2}^2\right);\nonumber
\end{split}
\end{equation}
for $l=k+2$,
\begin{equation}
\begin{split}
I_{16}+I_{17}+I_{18}\lesssim\left(\norm{\na n_1}_{H^2}+\norm{(n_2,u_1,u_2)}_{H^3}\right)\norm{\na^{k+2}(n_1,n_2,u_1,u_2)}_{L^2}^2.\nonumber
\end{split}
\end{equation}

Hence, by these estimates for $I_{11}\sim I_{18}$, we deduce for $l=k,k+1$
\begin{equation}\label{Iyi kk+1}
\begin{split}
I_{1}\lesssim\left(\norm{\na n_1}_{H^2}+\norm{(n_2,u_1,u_2)}_{H^3}\right)\left(\norm{\na^{l+1}(n_1,n_2)}_{L^2}^2+\norm{\na^l(u_1,u_2)}_{L^2}^2\right);
\end{split}
\end{equation}
for $l=k+2$
\begin{equation}
\begin{split}
I_{1}\lesssim\left(\norm{\na n_1}_{H^2}+\norm{(n_2,u_1,u_2)}_{H^3}\right)\norm{\na^{k+2}(n_1,n_2,u_1,u_2)}_{L^2}^2.\nonumber
\end{split}
\end{equation}

Now we estimate the term $I_2$, and we must be much more careful about this term since the magnetic field $B$ has the weakest dissipative estimates. First of all, we have
\begin{eqnarray}\label{Ieryi}
I_2&&=\sum_{\ell=1}^{ l} C_l^\ell \int \na^{l-\ell} u_2\times \na^{\ell} B \cdot \na^l u_1+ \na^{l-\ell} u_1\times \na^{\ell} B \cdot \na^l u_2\nonumber\\
&&\lesssim C_{l}\sum_{\ell=1}^{l}\left(\norm{\na^{l-\ell}u_2 \na^{\ell}B}_{L^2}\norm{\na^l u_1}_{L^2}+\norm{\na^{l-\ell}u_1 \na^{\ell}B}_{L^2}\norm{\na^l u_2}_{L^2}\right).
\end{eqnarray}
Here we notice $\na^lu_2\times B \cdot \na^lu_1+ \na^lu_1\times B \cdot \na^lu_2=0$. We only estimate the first term on the right-hand side of \eqref{Ieryi}, the second term can be estimated similarly. We again have to distinct the arguments by the value of $l$. First, let $l=k$. We take $L^3-L^6$ and then apply Lemma \ref{A1} to have
\begin{equation}
\begin{split}
\norm{\na^{k-\ell} u_2\na^{\ell}B}_{L^{2}}&\lesssim \norm{\na^{k-\ell} u_2}_{L^3} \norm{\na^{\ell}B}_{L^6}
\\&\lesssim \norm{\na^\alpha u_2}^{\frac {\ell}{k}}_{L^2}\norm{\na^{k}u_2}_{L^2}^{1-\frac{\ell}{k}} \norm{\na B}_{L^2}^{1-\frac{\ell}{k}}\norm{\na^{k+1}B}_{L^2}^{\frac {\ell}{k}},\nonumber
 \end{split}
\end{equation}
where $\alpha$ is defined by
\begin{equation}
k-\ell+\frac12=\alpha\times\frac{\ell}{k}
+k\times \left(1-\frac{\ell}{k}\right)
 \Longrightarrow \alpha=\frac{ k }{2\ell}\le \frac{k}{2}.\nonumber
\end{equation}
Hence by Young's inequality, we have that for $l=k$,
\begin{equation}\label{I3 k}
I_2\le C_k\left( \norm{(u_1,u_2)}_{H^{\frac{k}{2}}}  +\norm{\na B}_{L^2}  \right) \left(\norm{\na^{k}(u_1,u_2)}_{L^2}^2 +\norm{\na^{k+1}B}_{L^2}^2 \right).
\end{equation}

We then let $l=k+1$. If $1\le \ell\le k$, we take $L^3-L^6$ and by  Lemma \ref{A1} again to obtain
\begin{equation}
\begin{split}
\norm{\na^{k+1-\ell} u_2\na^{\ell}B}_{L^{2}}&\lesssim \norm{\na^{k+1-\ell} u_2}_{L^3} \norm{\na^{\ell}B}_{L^6}
\\&\lesssim \norm{\na^\alpha u_2}^{\frac {\ell}{k}}_{L^2}\norm{\na^{k+1}u_2}_{L^2}^{1-\frac{\ell}{k}} \norm{\na B}_{L^2}^{1-\frac{\ell}{k}}\norm{\na^{k+1}B}_{L^2}^{\frac {\ell}{k}},\nonumber
\end{split}
\end{equation}
where $\alpha$ is defined by
\begin{equation}
k+1-\ell+\frac12=\alpha\times\frac {\ell}{k}
+(k+1)\times \left({1-\frac{\ell}{k}}\right)
\Longrightarrow \alpha=1+\frac{k}{2\ell}\le \frac{k}{2}+1;\nonumber
\end{equation}
if $\ell=k+1$, we take $L^\infty-L^2$ to get
\begin{equation}
\norm{ u_2\na^{k+1}B}_{L^{2}}\lesssim \norm{ u_2}_{L^\infty} \norm{\na^{k+1}B}_{L^2}.\nonumber
\end{equation}
We thus have that for $l=k+1$, by Sobolev's inequality,
\begin{equation}
I_2\le C_k\left( \norm{(u_1,u_2)}_{H^{\frac{k}{2}+1}\cap H^2}  +\norm{\na B}_{L^2}  \right) \left(\norm{\na^{k+1}(u_1,u_2)}_{L^2}^2 +\norm{\na^{k+1}B}_{L^2}^2 \right).\nonumber
\end{equation}

We now let $l=k+2$. If $1\le \ell\le k$, we take $L^3-L^6$ and by Lemma \ref{A1} again to have
\begin{equation}
\begin{split}
\norm{\na^{k+2-\ell} u_2\na^{\ell}B}_{L^{2}}&\lesssim \norm{\na^{k+2-\ell} u_2}_{L^3} \norm{\na^{\ell}B}_{L^6}
\\&\lesssim \norm{\na^\alpha u_2}^{\frac {\ell}{k}}_{L^2}\norm{\na^{k+2}u_2}_{L^2}^{1-\frac{\ell}{k}} \norm{\na B}_{L^2}^{1-\frac{\ell}{k}}\norm{\na^{k+1}B}_{L^2}^{\frac {\ell}{k}},\nonumber
\end{split}
\end{equation}
where $\alpha$ is defined by
\begin{equation}
k+2-\ell+\frac12=\alpha\times\frac {\ell}{k}
+(k+2)\times \left({1-\frac{\ell}{k}}\right)
\Longrightarrow \alpha=2+\frac{k}{2\ell}\le \frac{k}{2}+2;\nonumber
\end{equation}
if $\ell=k+1$ or $ k+2$, we take $L^\infty-L^2$ to get
\begin{equation}
\norm{ \na u_2\na^{k+1}B}_{L^{2}}\lesssim \norm{\na u_2}_{L^\infty} \norm{\na^{k+1}B}_{L^2},\nonumber
\end{equation}
and
\begin{equation}
\norm{ u_2\na^{k+2}B}_{L^{2}}\lesssim \norm{  u_2}_{L^\infty} \norm{\na^{k+2}B}_{L^2}.\nonumber
\end{equation}
We thus have that for $l=k+2$,
\begin{equation}
\begin{split}
I_2&\le C_k\left( \norm{(u_1,u_2)}_{H^{\frac{k}{2}+2}\cap H^3}  +\norm{\na B}_{L^2}  \right) \left(\norm{\na^{k+2}(u_1,u_2)}_{L^2}^2 +\norm{\na^{k+1}B}_{L^2}^2 \right)
\\&\quad+C\norm{(u_1,u_2)}_{L^\infty} \norm{\na^{k+2}B}_{L^2}\norm{\na^{k+2}(u_1,u_2)}_{L^2}.\nonumber
\end{split}
\end{equation}

We now estimate the last term $I_3$ in \eqref{I1-I3}. First, we split $I_3$ as:
\begin{eqnarray}\label{Isanyi}
I_{3}&&=\nu\sum_{\ell=0}^{ l}C_l^\ell \int\left[ \na^{\ell}f\left(\frac{n_1-n_2}{2}\right)\na^{l-\ell}\left(\frac{u_1-u_2}{2}\right)-\na^{\ell}f\left(\frac{n_1+n_2}{2}\right)\na^{l-\ell}\left(\frac{u_1+u_2}{2}\right)\right]\cdot\na^{l} E \nonumber\\
&&=\frac{\nu}{2}\sum_{\ell=0}^{ l}C_l^\ell \int\na^{\ell}f\left(\frac{n_1-n_2}{2}\right)\na^{l-\ell}u_1\cdot\na^{l} E-\frac{\nu}{2}\sum_{\ell=0}^{ l}C_l^\ell \int\na^{\ell}f\left(\frac{n_1-n_2}{2}\right)\na^{l-\ell}u_2\cdot\na^{l} E  \nonumber\\
&&\quad-\frac{\nu}{2}\sum_{\ell=0}^{ l}C_l^\ell \int\na^{\ell}f\left(\frac{n_1+n_2}{2}\right)\na^{l-\ell}u_1\cdot\na^{l} E+\frac{\nu}{2}\sum_{\ell=0}^{ l}C_l^\ell \int\na^{\ell}f\left(\frac{n_1+n_2}{2}\right)\na^{l-\ell}u_2\cdot\na^{l} E  \nonumber\\
&&:=\frac{\nu}{2}I_{31}+\frac{\nu}{2}I_{32}+\frac{\nu}{2}I_{33}+\frac{\nu}{2}I_{34}.
\end{eqnarray}
 We still have to distinct the arguments by the value of $l$. For $l=k$ or $k+1$,
we only estimate the first term $I_{31}$ on the right-hand side of \eqref{Isanyi}, the other terms $I_{32}$--$I_{34}$ can be estimated similarly.
If $0\le \ell\le l-1$, we take $L^\infty-L^2$ and by Lemma \ref{A1} and the estimate \eqref{fkn0} of Lemma \ref{A2} to obtain
\begin{equation}
\begin{split}
&\norm{\na^{\ell}f\left(\frac{n_1-n_2}{2}\right)\na^{l-\ell}u_1}_{L^2}\\
&\quad\le  \norm{\na^{\ell} f\left(\frac{n_1-n_2}{2}\right)}_{L^\infty} \norm{\na^{l-\ell}u_1}_{L^2}
\\&\quad\le C_l\norm{\na^{\ell+1} n_1}_{L^2}^{\frac12} \norm{\na^{\ell+2}n_1}_{L^2}^{\frac12}\norm{\na^{l-\ell}u_1}_{L^2}
+C_l\norm{\na^{\ell+1} n_2}_{L^2}^{\frac12} \norm{\na^{\ell+2}n_2}_{L^2}^{\frac12}\norm{\na^{l-\ell}u_1}_{L^2}
\\&\quad\le C_l \left(\norm{\na n_1}_{L^2}^{\frac{l-\ell}{l}}\norm{\na^{l+1} n_1}_{L^2}^{\frac{\ell}{l}}\right)^{\frac12}  \left(\norm{\na n_1}_{L^2}^{\frac{l-\ell-1}{l}} \norm{\na^{l+1} n_1}_{L^2}^{\frac{\ell+1}{l}}\right)^{\frac12}\norm{\na^{l-\ell}u_1}_{L^2}
\\&\quad\quad+ C_l \left(\norm{\na n_2}_{L^2}^{\frac{l-\ell}{l}}\norm{\na^{l+1} n_2}_{L^2}^{\frac{\ell}{l}}\right)^{\frac12}  \left(\norm{\na n_2}_{L^2}^{\frac{l-\ell-1}{l}} \norm{\na^{l+1} n_2}_{L^2}^{\frac{\ell+1}{l}}\right)^{\frac12}\norm{\na^{l-\ell}u_1}_{L^2}
\\&\quad\le C_l \norm{\na n_1}_{L^2}^{1-\frac{2\ell+1}{2l}}\norm{\na^{l+1} n_1}_{L^2}^{\frac{2\ell+1}{2l}} \norm{\na^{\alpha} u_1}_{L^2}^{\frac{2\ell+1}{2l}} \norm{\na^{l} u_1}_{L^2}^{1-\frac{2\ell+1}{2l}}
\\&\quad\quad+C_l \norm{\na n_2}_{L^2}^{1-\frac{2\ell+1}{2l}}\norm{\na^{l+1} n_2}_{L^2}^{\frac{2\ell+1}{2l}}\norm{\na^{\alpha} u_1}_{L^2}^{\frac{2\ell+1}{2l}} \norm{\na^{l} u_1}_{L^2}^{1-\frac{2\ell+1}{2l}},\nonumber
\end{split}
\end{equation}
where $\alpha$ is defined by
\begin{equation}
\begin{split}
l-\ell=\alpha\times\frac{2\ell+1}{2l}
+l\times\left(1-\frac{2\ell+1}{2l}\right)
\Longrightarrow \alpha=\frac{l}{2\ell+1}\le l;\nonumber
\end{split}
\end{equation}
if $\ell=l$, we take $L^2-L^\infty$ and by the estimate \eqref{fkn0} of Lemma \ref{A2} to have
\begin{equation}
\norm{\na^{l}f\left(\frac{n_1-n_2}{2}\right)u_1}_{L^2}\le \norm{\na^{l}f\left(\frac{n_1-n_2}{2}\right)}_{L^6}\norm{u_1}_{L^3}
\le C_l\norm{\na^{l+1} (n_1,n_2) }_{L^2}\norm{u_1}_{H^1}.\nonumber
\end{equation}
We thus have that for $l=k$ or $k+1$,
\begin{equation}
I_{31}\le C_l  \left(\norm{  \na (n_1,n_2)}_{L^{2}}+\norm{u_1}_{H^{l}\cap H^1}\right)  \left(\norm{\na^{l+1} (n_1,n_2)}_{L^2}^2+\norm{\na^l u_1}_{L^2}^2+\norm{\na^l E}_{L^2}^2 \right).\nonumber
\end{equation}
Hence, we have that for $l=k$ or $k+1$,
\begin{equation}\label{I4 k k+1}
I_3\le C_l  \left(\norm{  \na (n_1,n_2)}_{L^{2}}+\norm{(u_1,u_2)}_{H^{l}\cap H^1}\right) \left(\norm{\na^{l+1} (n_1,n_2)}_{L^2}^2+\norm{\na^l (u_1,u_2)}_{L^2}^2+\norm{\na^l E}_{L^2}^2 \right).
\end{equation}

Now for $l=k+2$, we rewrite $I_{31}+I_{33}$ as
\begin{equation}
\begin{split}
I_{31}+I_{33}&=\sum_{\ell=0}^{ k+2}C_{k+2}^{\ell}\int \na^{\ell}g\na^{k+2-\ell}u_1\cdot\na^{k+2} E \\
&=\int \left(g\na^{k+2}u_1+ \na^{k+2}gu_1\right)\cdot\na^{k+2} E-\sum_{\ell=1}^{ k+1} C_{k+2}^{\ell}\int \na\left(\na^{k+2-\ell}g\na^{\ell}u_1\right)\cdot\na^{k+1} E
\\
&=\int \left(g\na^{k+2}u_1+ \na^{k+2}gu_1\right)\cdot\na^{k+2} E-(k+2)\int\left(\na^{k+2}g\na u_1+\na g\na^{k+2}u_1\right)\cdot\na^{k+1} E
\\&\quad-\sum_{\ell=2}^{ k+1} C_{k+2}^{\ell}\int \na^{k+3-\ell}g\na^{\ell}u_1 \cdot\na^{k+1} E
-\sum_{\ell=1}^{ k} C_{k+2}^{\ell}\int  \na^{k+2-\ell}g\na^{\ell+1}u_1\cdot\na^{k+1} E
\\&:=I_{311} +I_{312} +I_{313}+I_{314},\nonumber
\end{split}
\end{equation}
where the function $g$ is defined as
\begin{equation}\label{gf}
g:=f\left(\frac{n_1-n_2}{2}\right)-f\left(\frac{n_1+n_2}{2}\right).
\end{equation}
By Lemma \ref{A2} and \eqref{fn}, we have
\begin{equation}
\begin{split}
I_{311} &\le C_k\left(\norm{g}_{L^\infty}\norm{\na^{k+2}u_1}_{L^2}+\norm{\na^{k+2}g}_{L^2}\norm{u_1}_{L^\infty}\right)\norm{\na^{k+2} E}_{L^2}
\\&\le C_k\norm{(n_2,u_1)}_{L^\infty}\norm{\na^{k+2}(n_2,u_1)}_{L^2}\norm{\na^{k+2} E}_{L^2}\nonumber
\end{split}
\end{equation}
and
\begin{equation}
\begin{split}
I_{312} &\le C_k\left(\norm{\na^{k+2} g}_{L^2}\norm{\na u_1}_{L^\infty}+\norm{\na g}_{L^\infty}\norm{\na^{k+2} u_1}_{L^2}\right)\norm{\na^{k+1} E}_{L^2}
\\&\le C_k\norm{\na(n_2,u_1)}_{L^\infty}\norm{\na^{k+2}(n_2,u_1)}_{L^2}\norm{\na^{k+1} E}_{L^2}.\nonumber
\end{split}
\end{equation}
As for the cases $l=k,k+1$ for $I_{3}$, we can bound $I_{313}$ and $I_{314}$ by
\begin{equation}
\begin{split}
& I_{313}+I_{314}\\
 &\quad\le C_k\left(\norm{  \na (n_1,n_2)}_{L^{2}}+\norm{u_1}_{H^{k+1}}\right)  \left(\norm{\na^{k+2}( n_1,n_2)}_{L^2}^2+\norm{\na^{k+1}u_1}_{L^2}^2 +\norm{\na^{k+1} E}_{L^2}^2 \right).\nonumber
\end{split}
\end{equation}
Hence, we have that for $l=k+2$,
\begin{equation}
\begin{split}
& I_{31}+I_{33}\\
&\quad\le C_k\left(\norm{\na n_1}_{L^2}+\norm{(n_2,u_1)}_{H^{k+1}\cap H^3}\right) \left(\norm{\na^{k+1}u_1}_{L^2}^2 +\norm{\na^{k+2}( n_1,n_2,u_1)}_{L^2}^2+\norm{\na^{k+1} E}_{L^2}^2 \right)
 \\&\qquad+C_k\norm{(n_2,u_1)}_{L^\infty}\norm{\na^{k+2}(n_2,u_1)}_{L^2}\norm{\na^{k+2} E}_{L^2}.\nonumber
 \end{split}
\end{equation}
Similarly, we can estimate $I_{32}+I_{34}$ for $l=k+2$. So, we have for $l=k+2$,
\begin{equation}
\begin{split}
 I_{3} &\le C_k\left(\norm{\na n_1}_{L^2}+\norm{(n_2,u_1,u_2)}_{H^{k+1}\cap H^3}\right) \left(\sum_{l=k+1}^{k+2}\norm{\na^{l}( n_1,n_2,u_1,u_2)}_{L^2}^2 +\norm{\na^{k+1} E}_{L^2}^2 \right)
 \\&\quad+C_k\norm{(n_2,u_1,u_2)}_{L^\infty}\norm{\na^{k+2}(n_2,u_1,u_2)}_{L^2}\norm{\na^{k+2} E}_{L^2}.\nonumber
 \end{split}
\end{equation}

Consequently, plugging these estimates for $I_1\sim I_3$ into \eqref{I1-I3} with $l=k,k+1,k+2$, and then summing up, we deduce \eqref{energy 1}.
\end{proof}
Note that in Lemma \ref{energy lemma} we only derive the dissipative estimates of $u_1$ and $u_2$. We now recover the dissipative
estimates of $ n_1,n_2, E$ and $B$ by constructing some interactive energy functionals in the following lemma.
\begin{lemma}\label{other di}
For any integer $k\ge0$, we have that for any small fixed $\eta>0$,
\begin{eqnarray}\label{other dissipation}
&&\frac{d}{dt}\left(\sum_{l=k}^{k+1}\int \na^lu_1\cdot\na\na^{l} n_1+\na^lu_2\cdot\na\na^{l} n_2-\sum_{l=k}^{k+1}\int \na^{l}u_2\cdot\na^lE-\eta\int
\na^k E \cdot\na^{k}\na\times B \right)\nonumber
\\&&\quad+\la \left(\sum_{l=k+1}^{k+2}\norm{\na^{l}n_1}_{L^2}^2+\sum_{l=k}^{k+2}\norm{\na^{l}n_2}_{L^2}^2+\sum_{l=k}^{k+1}\norm{\na^{l}E}_{L^2}^2+ \norm{\na^{k+1} B}_{L^2}^2\right)\nonumber
\\ &&\ \le
 C\sum_{l=k}^{k+2}\norm{\na^{l} (u_1,u_2)}_{L^2}^2\nonumber\\
 &&\quad+C_kG\left(\sum_{l=k+1}^{k+2}\norm{ \na^{l}n_1}_{L^2}^2+\sum_{l=k}^{k+2}\norm{ \na^{l}(n_2, u_1,u_2 )}_{L^2}^2+\norm{\na^{k+1}B}_{L^2}^2 \right),
\end{eqnarray}
where $G$ is defined by
$$G=G(n_1,n_2,u_1,u_2,B):=\norm{\na n_1}_{H^2}^2+\norm{ (n_2, u_1,u_2)}_{H^{k+1}\cap H^3}^2+\norm{\na B}_{L^2}^2.$$
\end{lemma}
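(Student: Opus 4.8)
The plan is to build a Kawashima-type interactive energy functional whose time derivative manufactures exactly the dissipation for $n_1,n_2,E,B$ that was absent from Lemma \ref{energy lemma}. I would differentiate in time the quantity appearing on the left-hand side of \eqref{other dissipation}, namely
$$\mathcal{I}:=\sum_{l=k}^{k+1}\int\na^lu_1\cdot\na\na^ln_1+\na^lu_2\cdot\na\na^ln_2-\sum_{l=k}^{k+1}\int\na^lu_2\cdot\na^lE-\eta\int\na^kE\cdot\na^k\na\times B,$$
processing it block by block with the equations \eqref{yi}. In each block I substitute the linear principal parts ($\pa_tu_i\approx-\na n_i$, $\pa_tu_2\approx 2\nu E-\na n_2$, $\pa_tE\approx\nu\na\times B-\nu u_2$, $\pa_tB=-\nu\na\times E$), read off the negative-definite terms, and push everything else either into the already-available velocity dissipation $C\sum_{l=k}^{k+2}\norm{\na^l(u_1,u_2)}_{L^2}^2$ or into the nonlinear remainder $C_kG(\cdots)$, the latter estimated by Lemmas \ref{A1}, \ref{A2}, \ref{commutator} and \eqref{fn} exactly as in Lemma \ref{energy lemma}.

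First I treat the two density blocks. Differentiating $\int\na^lu_1\cdot\na\na^ln_1$ and inserting $\pa_tu_1=-\na n_1+\cdots$ produces the good term $-\norm{\na^{l+1}n_1}_{L^2}^2$ for $l=k,k+1$, hence $n_1$-dissipation at orders $k+1,k+2$ only (never at order $k$, consistent with its degeneracy). The contributions from $\pa_tn_1=-\mathrm{div}\,u_1+g_1$ and from the relaxation term $-\nu u_1$ are, after integrating by parts and applying Young's inequality, bounded by $\norm{\na^{l+1}u_1}_{L^2}^2+\norm{\na^lu_1}_{L^2}^2$, which live inside the available velocity dissipation. The block $\int\na^lu_2\cdot\na\na^ln_2$ is identical and yields $-\norm{\na^{l+1}n_2}_{L^2}^2$ for $l=k,k+1$, i.e. $n_2$ at orders $k+1,k+2$.

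Next comes the electric block $-\int\na^lu_2\cdot\na^lE$. The $2\nu E$ term of the $u_2$-equation gives $-2\nu\norm{\na^lE}_{L^2}^2$, the $E$-dissipation at orders $k,k+1$. Crucially, the $-\na n_2$ term, after integration by parts and the constraint $\mathrm{div}\,E=\nu(f(\frac{n_1+n_2}{2})-f(\frac{n_1-n_2}{2}))$ together with \eqref{fn} (so $\mathrm{div}\,E\approx\nu n_2$), produces the low-order term $-\norm{\na^ln_2}_{L^2}^2$ including $l=k$; this is what completes the full sum $\sum_{l=k}^{k+2}\norm{\na^ln_2}_{L^2}^2$. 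The dangerous coupling $-\nu\int\na^lu_2\cdot\na^l\na\times B$ is the regularity-loss term: for $l=k$ a direct Young's inequality against the available $\norm{\na^{k+1}B}_{L^2}$ suffices, whereas for $l=k+1$ I would integrate the curl by parts onto $u_2$, rewriting it as $-\nu\int\na^{k+1}\na\times u_2\cdot\na^{k+1}B$ and bounding it by $\epsilon\norm{\na^{k+2}u_2}_{L^2}^2+C\norm{\na^{k+1}B}_{L^2}^2$, thereby avoiding the unavailable $\norm{\na^{k+2}B}_{L^2}$. The residual cross terms between the manufactured dissipations (e.g. $2\nu\int\na^lE\cdot\na^{l+1}n_2$) are reabsorbed by Young's inequality into the good terms, the unit coefficients being compatible since $\nu=1/\sqrt\gamma\le1$.

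Finally the magnetic block $-\eta\int\na^kE\cdot\na^k\na\times B$ carries the main difficulty. Using $\pa_tB=-\nu\na\times E$ and $\mathrm{div}\,B=0$, so that $\norm{\na^k\na\times B}_{L^2}=\norm{\na^{k+1}B}_{L^2}$, gives the desired $-\eta\nu\norm{\na^{k+1}B}_{L^2}^2$, the only magnetic dissipation available. The obstacle is the cross term generated by $\na\times\na\times E=\na\,\mathrm{div}\,E-\De E$: the $-\De E$ part contributes $+\eta\nu\norm{\na^{k+1}E}_{L^2}^2$ with the wrong sign, and it is precisely here that $\eta$ must be taken small, so that this term is swallowed by the $-2\nu\norm{\na^{k+1}E}_{L^2}^2$ already produced by the electric block at level $l=k+1$; the $\na\,\mathrm{div}\,E$ part is non-positive after integrating by parts and is discarded. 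Summing the four blocks, collecting velocity terms into $C\sum_{l=k}^{k+2}\norm{\na^l(u_1,u_2)}_{L^2}^2$, estimating each nonlinearity in $g_1,\dots,g_5$ and in $u_i\times B$ by $C_kG(\cdots)$, and fixing $\eta$ (then the auxiliary $\epsilon$) small at the very end, yields \eqref{other dissipation}. I expect the two delicate points to be the choice of $\eta$ balancing the magnetic and electric dissipations, and the integration-by-parts rerouting of the curl that defeats the regularity loss of $B$.
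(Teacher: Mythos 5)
Your proposal is correct and follows essentially the same route as the paper: the same interactive functionals block by block, integration by parts in time using the complementary equations, the Gauss constraint $\operatorname{div}E=\nu\big(f(\tfrac{n_1+n_2}{2})-f(\tfrac{n_1-n_2}{2})\big)\sim \nu n_2$ to manufacture the zeroth-order $n_2$ dissipation, the split treatment of the curl coupling (Young's inequality at level $k$, rerouting the curl onto $u_2$ at level $k+1$ to avoid $\norm{\na^{k+2}B}_{L^2}$), and the small-$\eta$ absorption of the adverse $\norm{\na^{k+1}E}_{L^2}^2$ term produced by the magnetic cross-functional. The only cosmetic differences are that the paper obtains the low-order $n_2$ dissipation inside the density block (via the $2\nu E$ term tested against $\na\na^l n_2$) rather than in the electric block, and it writes $\int \na^k E\cdot\na^k\na\times\pa_t B=-\nu\norm{\na\times\na^k E}_{L^2}^2$ directly instead of invoking $\na\times\na\times E=\na\operatorname{div}E-\De E$; neither affects the argument.
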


\begin{proof} We divide the proof into four steps.

{\it Step 1: Dissipative estimates of $n_1, n_2$.}

Applying $\na^l$ ($l=k,k+1$) to $\eqref{yi}_2, \eqref{yi}_4$ and then taking the $L^2$ inner product with $\na\na^{l} n_1, \na\na^{l}n_2$ respectively, we obtain
\begin{eqnarray}  \label{n1n2yi}
\int&&\partial_t\na^lu_1 \cdot\na \na^{l} n_1 +  \partial_t\na^lu_2 \cdot\na \na^{l} n_2 +\norm{\na\na^{l } (n_1,n_2)}_{L^2}^2\nonumber\\
&&\le 2\nu \int  \na^{l}E\cdot\na \na^{l} n_2 +C \norm{\na^{l} u_1}_{L^2}\norm{\na^{l+1}  n_1}_{L^2}+C \norm{\na^{l} u_2}_{L^2}\norm{\na^{l+1}  n_2}_{L^2}\nonumber\\
&&\quad  +\norm{\na^{l}\left(u_1\cdot\na u_2+u_2\cdot\na u_1+n_1\na n_2+n_2\na n_1+u_1\times B\right)}_{L^2}\norm{\na^{l+1}  n_2}_{L^2}\nonumber\\
&&\quad  +\norm{\na^{l}\left(u_1\cdot\na u_1+u_2\cdot\na u_2+n_1\na n_1+n_2\na n_2+u_2\times B\right)}_{L^2}\norm{\na^{l+1}  n_1}_{L^2}.
\end{eqnarray}

The delicate first term on the left-hand side of \eqref{n1n2yi}
involves $\partial_t\na^{l} (u_1,u_2)$, and the key idea is to integrate by parts
in the $t$-variable and use the equations $\eqref{yi}_1$ and $\eqref{yi}_3$. Thus integrating by
parts for both the $t$- and $x$-variables, we obtain
\begin{equation}
\begin{split}
\int & \na^{l} \partial_tu_1\cdot\na\na^l n_1+ \na^{l} \partial_tu_2\cdot\na\na^l n_2\\
&=\frac{d}{dt}\int
\na^{l}u_1\cdot\na\na^l n_1+\na^{l}u_2\cdot\na\na^l n_2 -\int  \na^{l}
 u_1\cdot \na \na^{l}\partial_t n_1+\na^{l}
 u_2\cdot \na \na^{l}\partial_t n_2
 \\
&=\frac{d}{dt}\int
\na^{l}u_1\cdot\na\na^l n_1+\na^{l}u_2\cdot\na\na^l n_2 +\int  \na^{l}
{\rm div} u_1\na^{l}\partial_t n_1+\na^{l}
{\rm div} u_2\na^{l}\partial_t n_2  \\
& =\frac{d}{dt}\int
\na^{l}u_1\cdot\na\na^l n_1+\na^{l}u_2\cdot\na\na^l n_2 -\norm{\na^{l} {{\rm div} }(u_1,u_2)}_{L^2}^2+\int \na^{l} {\rm div} u_1\na^{l}g_1+\na^{l} {\rm div} u_2\na^{l}g_3
\\
& \ge \frac{d}{dt}\int
\na^{l}u_1\cdot\na\na^l n_1+\na^{l}u_2\cdot\na\na^l n_2 -C\norm{\na^{l+1} (u_1,u_2)}_{L^2}^2\\
 &\quad
 -C\norm{\na^{l}(u_1 \cdot \na n_1,u_2 \cdot \na n_1, n_1{\rm div} u_1,n_1{\rm div} u_2)}_{L^2}^2\\
 &\quad
 -C\norm{\na^{l}(u_1 \cdot \na n_2,u_2 \cdot \na n_2, n_2{\rm div} u_2,n_2 {\rm div} u_1)}_{L^2}^2.\nonumber
\end{split}
\end{equation}
First, we have
\begin{equation}
\begin{split}
\norm{\na^{l}(u_1\cdot\na n_1)}_{L^2}\lesssim\sum_{0\le\ell\le l}\norm{\na^{\ell}u_1\cdot\na\na^{l-\ell}n_1}_{L^2}.\nonumber
\end{split}
\end{equation}
If $\ell=0$, then
\begin{equation}  \label{n1n2si}
\begin{split}
\norm{u_1\cdot\na\na^{l}n_1 }_{L^2}\lesssim\norm{u_1}_{L^\infty}\norm{\na^{l+1}n_1}_{L^2}\lesssim\norm{u_1}_{H^2}\norm{\na^{l+1}n_1}_{L^2};
\end{split}
\end{equation}
if $1\le \ell\le \left[\frac{l}{2}\right]$, by H\"older's inequality and Lemma A.1, we have
\begin{eqnarray}\label{n1n2wu}
\norm{\na^{\ell}u_1\cdot\na\na^{l-\ell}n_1}_{L^2}&&\lesssim \norm{\na^{l+1-\ell} n_1}_{L^6} \norm{\na^{\ell}u_1}_{L^3}\nonumber
\\&&\lesssim \norm{\na n_1}_{L^2}^{\frac{\ell-1}{l}}  \norm{\na^{l+1} n_1}_{L^2}^{\frac{l-\ell+1}{l}} \norm{\na^\alpha u_1}_{L^2}^{\frac{l-\ell+1}{l}}\norm{\na^{l+1}u_1}_{L^2}^{\frac{\ell-1}{l}}\nonumber
\\&&\lesssim\left(\norm{\na n_1}_{L^2}+\norm{u_1}_{H^3}\right)\norm{\na^{l+1}(n_1,u_1)}_{L^2},
\end{eqnarray}
where $\alpha$ is defined by
\begin{equation}
\begin{split}
\ell+\frac{1}{2}=\alpha\times\frac{l-\ell+1}{l}
+(l+1)\times\frac{\ell-1}{l} \Longrightarrow \alpha=\frac{3l-2\ell+2}{2l-2\ell+2}\in \left[\frac{3}{2},3\right)\, \text{ since }\ell\le \frac{l}{2};\nonumber
\end{split}
\end{equation}
if $\left[\frac l2\right]+1\le \ell\le l$, by H\"older's inequality and Lemma A.1 again, we have
\begin{eqnarray}\label{n1n2qi}
\norm{\na^{\ell}u_1\cdot\na\na^{l-\ell}n_1}_{L^2}&&\lesssim \norm{\na^{l+1-\ell} n_1}_{L^3} \norm{\na^{\ell}u_1}_{L^6}\nonumber
\\&&\lesssim \norm{\na^\alpha n_1}_{L^2}^{\frac{\ell+1}{l+1}}  \norm{\na^{l+1} n_1}_{L^2}^{\frac{l-\ell}{l+1}} \norm{ u_1}_{L^2}^{\frac{l-\ell}{l+1}}\norm{\na^{l+1}u_1}_{L^2}^{\frac{\ell+1}{l+1}}\nonumber
\\&&\lesssim\left(\norm{\na n_1}_{H^2}+\norm{u_1}_{H^3}\right)\norm{\na^{l+1}(n_1,u_1)}_{L^2},
\end{eqnarray}
where $\alpha$ is defined by
\begin{equation}
\begin{split}
l-\ell+\frac{3}{2}=\alpha\times\frac{\ell+1}{l+1}
+l-\ell
 \Longrightarrow \alpha=\frac{3l+3}{2\ell+2}\in \left[\frac{3}{2},3\right)\, \text{ since }\ell\ge \frac{l+1}{2}.\nonumber
\end{split}
\end{equation}
Hence, by \eqref{n1n2si}--\eqref{n1n2qi}, we have
\begin{equation}\label{n1n2jiu}
\norm{\na^{l}(u_1\cdot\na n_1)}_{L^2}\lesssim\left(\norm{\na n_1}_{H^2}+\norm{u_1}_{H^3}\right)\norm{\na^{l+1}(n_1,u_1)}_{L^2}.
\end{equation}
Similarly, we also have
\begin{equation}
\norm{\na^{l}(u_2 \cdot \na n_1, n_1{\rm div} u_1,n_1{\rm div} u_2)}_{L^2}\lesssim\left(\norm{\na n_1}_{H^2}+\norm{(u_1,u_2)}_{H^3}\right)\norm{\na^{l+1}(n_1,u_1,u_2)}_{L^2}.
\end{equation}
Using the commutator estimate of Lemma \ref{commutator}, we have
\begin{eqnarray}
\norm{\na^{l}(u_1 \cdot \na n_2)}_{L^2}&&\le \norm{ u_1 \cdot \na^{l} \na n_2 }_{L^2}+\norm{\left[\na^l ,u_1 \right]\cdot \na n_2 }_{L^2}\nonumber
\\&&\le  \norm{ u_1}_{L^\infty} \norm{ \na^{l+1}   n_2 }_{L^2}+C_l\norm{\na u_1}_{L^\infty}
\norm{\na^{l}n_2}_{L^2}+C_l\norm{\na^l u_1}_{L^2}\norm{\na n_2}_{L^\infty}\nonumber
\\&&\le C_l \norm{(n_2, u_1)}_{H^3} \left(\norm{ \na^{l}  ( n_2,u_1) }_{L^2}+\norm{ \na^{l+1}   n_2 }_{L^2}\right).
\end{eqnarray}
Similarly,
\begin{eqnarray}\label{n1n2shier}
&&\norm{\na^{l}(u_2\cdot\na n_2, n_2 {\rm div} u_2,n_2{\rm div}u_1)}_{L^2}\nonumber\\
&&\quad\le C_l \norm{(n_2,u_1, u_2)}_{H^3} \left(\norm{ \na^{l}  ( n_2,u_1,u_2) }_{L^2}+\norm{ \na^{l+1} (n_2,u_1,u_2) }_{L^2}\right).
\end{eqnarray}
Hence, we obtain
\begin{eqnarray}  \label{n1n2shisan}
&&\int\na^{l} \partial_tu_1\cdot\na\na^l n_1+ \na^{l} \partial_tu_2\cdot\na\na^l n_2\nonumber\\
&&\ge \frac{d}{dt}\int
\na^{l}u_1\cdot\na^l\na n_1+\na^{l}u_2\cdot\na^l\na n_2 -C\norm{\na^{l+1}(u_1,u_2)}_{L^2}^2\nonumber
\\&&\ -C_l \left(\norm{\na n_1}_{H^2}^2+\norm{(n_2,u_1,u_2)}_{H^3}^2\right) \left(\norm{ \na^{l}  ( n_2,u_1,u_2) }_{L^2}^2+\norm{ \na^{l+1}  (n_1,n_2,u_1,u_2) }_{L^2}^2\right).
\end{eqnarray}

Next, integrating by parts and using the equation $\eqref{yi}_7$, we have
\begin{eqnarray}\label{n1n2shisi}
2\nu &&\int  \na^{l}E\cdot \na\na^{l} n_2 \nonumber\\
&&=-2\nu \int  \na^{l}{\rm div} E\na^{l} n_2
=-2\nu ^2\int  \na^{l}\left(f(\frac{n_1+n_2}{2})-f(\frac{n_1-n_2}{2})\right)\na^{l} n_2 \nonumber\\&&=-2\nu ^2\int  \na^{l}\left[n_2+f(\frac{n_1+n_2}{2})-f(\frac{n_1-n_2}{2})-n_2\right]\na^{l} n_2 \nonumber\\
&&\lesssim -\norm{\na^{l}n_2}_{L^2}^2+ \left(\norm{\na n_1}_{H^2}+\norm{n_2}_{H^3}\right)\left(\norm{\na^{l+1}(n_1,n_2)}_{L^2}+\norm{\na^{l}n_2}_{L^2}\right).
\end{eqnarray}
Here we have used the estimate
\begin{eqnarray}\label{n12}
&&\norm{\na^{l}\left[f(\frac{n_1+n_2}{2})-f(\frac{n_1-n_2}{2})-n_2\right]}_{L^2}\nonumber\\
&&\quad\lesssim  \left(\norm{\na n_1}_{H^2}+\norm{n_2}_{H^3}\right)\left(\norm{\na^{l+1}(n_1,n_2)}_{L^2}+\norm{\na^{l}n_2}_{L^2}\right).
\end{eqnarray}
In fact, by noticing that $f(\frac{n_1+n_2}{2})-f(\frac{n_1-n_2}{2})-n_2\sim n_1n_2$ and Lemma \ref{A2}, we have
\begin{equation}\label{n12 yi}
\begin{split}
\norm{\na^{l}\left[f(\frac{n_1+n_2}{2})-f(\frac{n_1-n_2}{2})-n_2\right]}_{L^2}
\lesssim\norm{\na^l(n_1n_2)}_{L^2}\lesssim  \sum_{0\le\ell\le l}\norm{\na^\ell n_1\na^{l-\ell}n_2}_{L^2}.
\end{split}
\end{equation}
If $\ell=0$, then
\begin{equation}
\begin{split}
\norm{n_1\na^ln_2}_{L^2}\lesssim\norm{n_1}_{L^6}\norm{\na^ln_2}_{L^3}\lesssim\norm{\na n_1}_{L^2}\norm{\na^ln_2}_{H^1};
\end{split}
\end{equation}
if $1\le\ell\le\left[\frac l2\right]$, by H\"older's inequality and Lemma A.1, we have
\begin{eqnarray}\label{n12 san}
\norm{\na^\ell n_1\na^{l-\ell}n_2}_{L^2}&&\lesssim\norm{\na^\ell n_1}_{L^3}\norm{\na^{l-\ell}n_2}_{L^6}\nonumber\\
&&\lesssim\norm{\na^\alpha n_1}_{L^2}^{\frac{l-\ell+1}{l}}\norm{\na^{l+1}n_1}_{L^2}^{\frac{\ell-1}{l}}\norm{n_2}_{L^2}^{\frac{\ell-1}{l}}\norm{\na^l n_2}_{L^2}^{\frac{l-\ell+1}{l}}\nonumber\\
&&\lesssim\left(\norm{\na n_1}_{H^2}+\norm{n_2}_{L^2}\right)\left(\norm{\na^{l+1}n_1}_{L^2}+\norm{\na^l n_2}_{L^2}\right),
\end{eqnarray}
where $\alpha$ is defined by
\begin{equation}
\begin{split}
\ell+\frac{1}{2}=\alpha\times\frac{l-\ell+1}{l}
+(l+1)\times \frac{\ell-1}{l}
 \Longrightarrow \alpha=\frac{3l-2\ell+2}{2l-2\ell+2}\in \left[\frac{3}{2},3\right)\, \text{ since }\ell\le \frac{l}{2};\nonumber
\end{split}
\end{equation}
if $\left[\frac l2\right]+1\le\ell\le l$, by H\"older's inequality and Lemma A.1 again, we have
\begin{eqnarray}\label{n12 wu}
\norm{\na^\ell n_1\na^{l-\ell}n_2}_{L^2}&&\lesssim\norm{\na^\ell n_1}_{L^6}\norm{\na^{l-\ell}n_2}_{L^3}\nonumber\\
&&\lesssim\norm{\na n_1}_{L^2}^{\frac{l-\ell}{l}}\norm{\na^{l+1}n_1}_{L^2}^{\frac{\ell}{l}}\norm{\na^\alpha n_2}_{L^2}^{\frac{\ell}{l}}\norm{\na^l n_2}_{L^2}^{\frac{l-\ell}{l}}\nonumber\\
&&\lesssim\left(\norm{\na n_1}_{L^2}+\norm{n_2}_{H^3}\right)\left(\norm{\na^{l+1}n_1}_{L^2}+\norm{\na^l n_2}_{L^2}\right),
\end{eqnarray}
where $\alpha$ is defined by
\begin{equation}
\begin{split}
l-\ell+\frac{1}{2}=\alpha\times\frac{\ell}{l}
+l-\ell
 \Longrightarrow \alpha=\frac{l}{2\ell}\in \left[\frac{1}{2},3\right)\, \text{ since }\ell\ge \frac{l+1}{2}.\nonumber
\end{split}
\end{equation}
By \eqref{n12 yi}--\eqref{n12 wu}, we complete the proof of \eqref{n12}.

Lastly, as in \eqref{n1n2jiu}--\eqref{n1n2shier}, we have
\begin{eqnarray} \label{n1n2shiwu}
&&\norm{\na^{l}\left(u_1\cdot\na u_2+u_2\cdot\na u_1+n_1\na n_2+n_2\na n_1\right)}_{L^2}\nonumber\\
&&\quad+\norm{\na^{l}\left(u_1\cdot\na u_1+u_2\cdot\na u_2+n_1\na n_1+n_2\na n_2\right)}_{L^2}\nonumber\\
&&\le C_l \left(\norm{\na n_1}_{H^2}+\norm{(n_2,u_1,u_2)}_{H^3}\right)  \left(\norm{ \na^{l}( n_2,u_1,u_2) }_{L^2}+\norm{ \na^{l+1} (n_1,n_2,u_1,u_2) }_{L^2}\right).
\end{eqnarray}
From the estimate of $I_2$ in Lemma \ref{energy lemma}, we have that for $l=k$ or $k+1$,
\begin{eqnarray} \label{n1n2shiliu}
&&\norm{\na^{l}\left(u_1\times B,u_2\times B\right)}_{L^2} \nonumber\\
&&\quad\le C_k\left( \norm{(u_1,u_2)}_{H^{\frac{k}{2}+1}\cap H^2}  +\norm{\na B}_{L^2}  \right) \left(\norm{\na^l (u_1,u_2)}_{L^2} +\norm{\na^{k+1}B}_{L^2} \right).
\end{eqnarray}

Plugging these estimates \eqref{n1n2shisan}, \eqref{n1n2shisi},\eqref{n1n2shiwu} and \eqref{n1n2shiliu} into
\eqref{n1n2yi}, by Cauchy's inequality, we obtain
\begin{eqnarray}  \label{n estimate}
\frac{d}{dt}&&\sum_{l=k}^{k+1}\int \na^lu_1\cdot\na\na^{l} n_1+\na^lu_2\cdot\na\na^{l} n_2+\la \left(\sum_{l=k+1}^{k+2}\norm{\na^{l}n_1}_{L^2}^2+\sum_{l=k}^{k+2}\norm{\na^{l}n_2}_{L^2}^2\right)\nonumber
\\ &&\le
 C\sum_{l=k}^{k+2}\norm{\na^{l}(u_1,u_2)}_{L^2}^2\nonumber\\
 &&\quad+C_kG\left(\sum_{l=k+1}^{k+2}\norm{ \na^{l}n_1}_{L^2}^2+\sum_{l=k}^{k+2}\norm{ \na^{l}(n_2, u_1,u_2 )}_{L^2}^2+\norm{\na^{k+1}B}_{L^2}^2 \right).
\end{eqnarray}
Here $G$ is well-defined above. This completes the dissipative estimates for $n_1,n_2$.

{\it Step 2: Dissipative estimate of $E$.}

Applying $\na^l$ ($l=k,k+1$) to $\eqref{yi}_4$ and then taking the $L^2$ inner product with $-\na^{l} E$, we obtain
\begin{eqnarray}  \label{Eyi}
-&&\int  \na^l \partial_tu_2 \cdot\na^{l}E +2\nu \norm{\na^{l} E}_{L^2}^2\le\int \na\na^{l} n_2\cdot \na^{l}E +C \norm{\na^{l} (u_1,u_2)}_{L^2}\norm{\na^{l}  E}_{L^2}\nonumber\\
 &&\qquad\qquad +\norm{\na^{l}\left(u_1\cdot\na
u_2+u_2\cdot\na u_1+ n_1\na n_2+n_2\na n_1+u_1\times B\right)}_{L^2}\norm{\na^{l } E}_{L^2}.
\end{eqnarray}

Again, the delicate first term on the left-hand side of \eqref{Eyi}
involves $\partial_t\na^{l} u_2$, and the key idea is to integrate by parts
in the $t$-variable and use the equation $\eqref{yi}_5$ in the Maxwell system. Thus we obtain
\begin{eqnarray}  \label{Eer}
 -&&\int  \na^{l} \partial_tu_2\cdot\na^lE \nonumber\\
 &&=-\frac{d}{dt}\int
\na^{l}u_2\cdot\na^lE +\int  \na^{l}
u_2\cdot\na^{l}\partial_t E  \nonumber\\
&&=-\frac{d}{dt}\int
\na^{l}u_2\cdot\na^lE -\nu \norm{\na^{l}u_2}_{L^2}^2+ \int \na^{l} u_2\cdot\na^{l}\left(g_5+
\nu\na\times B\right) .
\end{eqnarray}
From the estimates of $I_3$ in Lemma \ref{energy lemma}, we have that
\begin{equation}
\norm{\na^{l}g_5}_{L^2}\le C_l  \left(\norm{\na n_1}_{L^2}+\norm{(n_2,u_1,u_2)}_{H^{l}\cap H^1}\right)\left(\norm{\na^{l+1} (n_1,n_2)}_{L^2}+\norm{\na^l (u_1,u_2)}_{L^2} \right).\nonumber
\end{equation}
We must be much more careful about the remaining term in \eqref{Eer} since there is no small factor in front of it. The key is to use Cauchy's inequality and distinct the cases of $l=k$ and $l=k+1$ due to the weakest dissipative estimate of $B$. For $l=k$, we have
\begin{equation}
\nu \int \na^{k} u_2\cdot
\na\times \na^{k} B   \le \varepsilon \norm{ \na^{k+1} B}_{L^2}^2+C_\varepsilon \norm{\na^{k} u_2}_{L^2}^2;\nonumber
\end{equation}
for $l=k+1$, integrating by parts, we obtain
\begin{eqnarray}\label{Ewu}
\nu \int \na^{k+1} u_2\cdot
\na\times \na^{k+1} B  &&=
\nu \int \na\times \na^{k+1} u_2\cdot
 \na^{k+1} B\nonumber\\
&&\le \varepsilon \norm{ \na^{k+1} B}_{L^2}^2+C_\varepsilon \norm{\na^{k+2} u_2}_{L^2}^2.
\end{eqnarray}

Plugging these estimates \eqref{Eer}--\eqref{Ewu} and \eqref{n1n2shisi}, \eqref{n1n2shiwu} and \eqref{n1n2shiliu} from Step 1 into
\eqref{Eyi}, by Cauchy's inequality, we then obtain
\begin{eqnarray}  \label{E estimate}
&& -\frac{d}{dt}\sum_{l=k}^{k+1}\int \na^{l}u_2\cdot\na^lE +\la \sum_{l=k}^{k+1}\norm{\na^{l}E}_{L^2}^2
\le \varepsilon \norm{ \na^{k+1} B}_{L^2}^2+
 C_\varepsilon \sum_{l=k}^{k+2}\norm{\na^{l} (u_1,u_2)}_{L^2}^2\nonumber\\
&&\qquad\qquad+C_kG\left(\sum_{l=k+1}^{k+2}\norm{ \na^{l}n_1}_{L^2}^2+\sum_{l=k}^{k+2}\norm{ \na^{l}(n_2,u_1, u_2 )}_{L^2}^2+\norm{\na^{k+1}B}_{L^2}^2 \right).
\end{eqnarray}
This completes the dissipative estimate for $E$.

{\it Step 3: Dissipative estimate of $B$.}

Applying $\na^k$ to $\eqref{yi}_5$ and then taking the $L^2$ inner product with $-\na\times\na^{k} B$,  we obtain
\begin{eqnarray}  \label{Byi}
&&-\int \na^k \partial_tE \cdot\na\times\na^{k} B +\nu \norm{\na\times\na^{k} B}_{L^2}^2\nonumber
\\&&\quad\le \nu \norm{\na^{k}u_2}_{L^2}\norm{\na\times\na^{k} B}_{L^2}+
 \norm{\na^{k}g_5}_{L^2}\norm{\na\times\na^{k} B}_{L^2}.
\end{eqnarray}
Integrating by parts for both the $t$- and $x$-variables and using the equation $\eqref{yi}_6$, we have
\begin{eqnarray*}
-\int \na^k \partial_tE \cdot\na\times\na^{k} B
&&=-\frac{d}{dt}\int
\na^k E \cdot\na\times\na^{k} B +\int  \na\times\na^{k}
E\cdot\na^{k}\partial_t B  \nonumber\\
&&=-\frac{d}{dt}\int
\na^k E \cdot\na\times\na^{k} B -\nu \norm{\na\times\na^{k}
E}_{L^2}^2.
\end{eqnarray*}
From the estimates of $I_3$ in Lemma \ref{energy lemma}, we have that
\begin{equation*}
\norm{\na^{k}g_5}_{L^2}\le C_l  \left(\norm{\na n_1}_{L^2}+\norm{(n_2,u_1,u_2)}_{H^{k}\cap H^1}\right)\left(\norm{\na^{k+1} (n_1,n_2)}_{L^2}+\norm{\na^k (u_1,u_2)}_{L^2} \right).
\end{equation*}
Plugging the estimates above into \eqref{Byi} and by Cauchy's inequality, since ${\rm div} B=0$, we
then obtain
\begin{eqnarray}   \label{B estimate}
&&-\frac{d}{dt}\int
\na^k E \cdot\na^{k}\na\times B +\la \norm{\na^{k+1} B}_{L^2}^2\le C\norm{\na^{k}u_2}_{L^2}^2+C\norm{\na^{k+1}
  E}_{L^2}^2\nonumber\\&&\quad+
C_k  \left(\norm{\na n_1}_{L^2}^2+\norm{(n_2,u_1,u_2)}_{H^{k}\cap H^1}^2\right)\left(\norm{\na^{k+1} (n_1,n_2)}_{L^2}^2+\norm{\na^k (u_1,u_2)}_{L^2}^2 \right).
\end{eqnarray}
This completes the dissipative estimate for $B$.

{\it Step 4: Conclusion.}

Multiplying \eqref{B estimate} by a small enough but fixed constant $\eta$ and then adding it to \eqref{E estimate}  so that the second term on the right-hand side of \eqref{B estimate} can be absorbed, then choosing $\varepsilon$ small enough so that the first term on the right-hand side of \eqref{E estimate} can be absorbed; we obtain
\begin{equation}
\begin{split}
& \frac{d}{dt}\left(\sum_{l=k}^{k+1}\int \na^{l}u_2\cdot\na^lE-\eta\int
\na^k E \cdot\na^{k}\na\times B \right)+\la \left(\sum_{l=k}^{k+1}\norm{\na^{l}E}_{L^2}^2+ \norm{\na^{k+1} B}_{L^2}^2\right)
\\ &\quad\le  C \sum_{l=k}^{k+2}\norm{\na^{l} u_2}_{L^2}^2
 +C_kG\left(\sum_{l=k+1}^{k+2}\norm{ \na^{l}n_1}_{L^2}^2+\sum_{l=k}^{k+2}\norm{ \na^{l}(n_2, u_1,u_2)}_{L^2}^2+\norm{\na^{k+1}B}_{L^2}^2\right).\nonumber
\end{split}
\end{equation}
Adding the inequality above to \eqref{n estimate}, we get \eqref{other dissipation}.
\end{proof}

%%%%%%%%%%%%%%%%%%%%%%%%%%%%%%%%%%%%%%%%%%%%%%%%%%%%%%%%%%%%%%%%%%%%%%%%%%%%%%
\subsection{Negative Sobolev estimates}
%%%%%%%%%%%%%%%%%%%%%%%%%%%%%%%%%%%%%%%%%%%%%%%%%%%%%%%%%%%%%%%%%%%%%%%%%%%%%%

In this subsection, we will derive the evolution of the negative Sobolev
norms of $U:=(n_1,n_2,u_1,u_2,E,B)$. In order to estimate the nonlinear terms, we need to
restrict ourselves to that $s\in (0,3/2)$. We will establish the following lemma.

\begin{lemma}\label{negative Sobolev norm}
For $s\in(0,1/2]$, we have
\begin{equation}  \label{1E_s}
\begin{split}
\frac{d}{dt}\norm{U}_{\dot{H}^{-s}}^2 +\la\norm{(u_1,u_2)}_{\dot{H}^{-s}}^2\lesssim \left(\norm{(n_2,u_1,u_2)}_{H^2}^2+\norm{\na(n_1,
B)}_{H^1}^2 \right)\norm{U}_{\dot{H}^{-s}};
\end{split}
\end{equation}
and for $s\in(1/2,3/2)$, we have
\begin{eqnarray}  \label{1E_s2}
&&\frac{d}{dt}\norm{U}_{\dot{H}^{-s}}^2 +\la\norm{(u_1,u_2)}_{\dot{H}^{-s}}^2\lesssim\norm{(\na n_1,n_2,u_1,u_2)}_{H^{1}}^{2}\norm{U}_{\dot{H}^{-s}}\nonumber\\
 &&\quad+\norm{(n_1,B)}_{L^2}^{s-1/2}\norm{\na (n_1,B)}_{L^2}^{3/2-s}\norm{(\na n_1,\na n_2,u_1,u_2,\na u_1,\na u_2)}_{L^2}\norm{U}_{\dot{H}^{-s}}.
\end{eqnarray}
\end{lemma}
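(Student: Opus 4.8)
The plan is to carry out the standard $\dot{H}^{-s}$ energy estimate on the system \eqref{yi}: apply the operator $\Lambda^{-s}$ to each of the six evolution equations, take the $L^2$ inner product of the $j$-th equation with a suitable constant multiple of the corresponding $\Lambda^{-s}$-variable, and sum. Concretely I would work with the functional $\norm{\Lambda^{-s}(n_1,n_2,u_1,u_2)}_{L^2}^2+2\norm{\Lambda^{-s}(E,B)}_{L^2}^2$, which is equivalent to $\norm{U}_{\dot{H}^{-s}}^2$ and is chosen so that all linear coupling cancels. Since $\Lambda^{-s}$ commutes with constant-coefficient operators and $B_\infty$ is constant: the pressure/divergence pairs $\na n_i$ and $\mathrm{div}\,u_i$ cancel by integration by parts; the Lorentz terms from $\eqref{yi}_2,\eqref{yi}_4$ cancel by the antisymmetry of the scalar triple product $B_\infty\cdot(\Lambda^{-s}u_1\times\Lambda^{-s}u_2)$; the electric coupling $-2\nu E$ in $\eqref{yi}_4$ cancels $+\nu u_2$ in $\eqref{yi}_5$ precisely because of the weight $2$; and the curl terms $\na\times B$, $\na\times E$ cancel again by integration by parts. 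The relaxation terms $\nu u_1,\nu u_2$ then survive and produce the dissipation $\la\norm{(u_1,u_2)}_{\dot{H}^{-s}}^2$ on the left-hand side.

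What remains are the nonlinear contributions $\int\Lambda^{-s}g_i\cdot\Lambda^{-s}(\cdot)$ from \eqref{g}, together with the two magnetic products $\int\Lambda^{-s}(u_2\times B)\cdot\Lambda^{-s}u_1+\int\Lambda^{-s}(u_1\times B)\cdot\Lambda^{-s}u_2$. For each such term I would write $\int\Lambda^{-s}h\cdot\Lambda^{-s}w\le\norm{\Lambda^{-s}h}_{L^2}\norm{U}_{\dot{H}^{-s}}$ and then invoke the negative-order embedding of Lemma \ref{Riesz lemma}, namely $\norm{\Lambda^{-s}h}_{L^2}=\norm{h}_{\dot{H}^{-s}}\lesssim\norm{h}_{L^p}$ with $1/p=1/2+s/3$, which is admissible exactly in the range $0<s<3/2$. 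Because every $h$ is a quadratic product, Hölder's inequality splits it into two factors whose reciprocal exponents sum to $1/p=1/2+s/3$; the natural choice places one factor in $L^2$ and the other in $L^{3/s}$, and the nonlinear function $f(\cdot)$ occurring in $g_5$ is reduced to its linear behaviour through \eqref{fn} and Lemma \ref{A2}.

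The case split is dictated by the size of $3/s$. For $s\in(0,1/2]$ one has $3/s\ge 6$, so the $L^{3/s}$ factor is controlled by $\norm{\cdot}_{L^{3/s}}\lesssim\norm{\na(\cdot)}_{H^1}$ via Gagliardo--Nirenberg (Lemma \ref{A1}) and Sobolev embedding; this produces the clean bound \eqref{1E_s} involving only $H^2$ and $\na H^1$ norms. For $s\in(1/2,3/2)$ one has $3/s\in(2,6)$, so no single factor sits in an $L^q$ controlled directly by the energy, and one must interpolate. Here the delicate point is that the degenerately dissipative $n_1$ and the regularity-loss field $B$ are \emph{not} controlled in $L^2$ by the dissipation, so whenever one of them appears undifferentiated I would estimate it by Gagliardo--Nirenberg as $\norm{(n_1,B)}_{L^{3/s}}\lesssim\norm{(n_1,B)}_{L^2}^{s-1/2}\norm{\na(n_1,B)}_{L^2}^{3/2-s}$, while the genuinely dissipative factors $\na n_1,\na n_2,u_1,u_2,\na u_1,\na u_2$ are put in $L^2$. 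This yields exactly the second, interpolated term of \eqref{1E_s2}.

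The main obstacle is the regime $s\in(1/2,3/2)$: one must route every quadratic term so that whenever $n_1$ or $B$ enters undifferentiated it is assigned the interpolated $L^{3/s}$ norm with the precise exponents $s-1/2$ and $3/2-s$, since any other bookkeeping would demand an $L^2$ control of $n_1$ or $B$ that the dissipation simply does not furnish. Verifying that each of the many summands in $g_1,\dots,g_5$ and in the Lorentz products can indeed be paired as (interpolated bad factor) $\times$ (dissipative factor) $\times\,\norm{U}_{\dot{H}^{-s}}$, with the Hölder exponents matching $1/p=1/2+s/3$ in every instance, is the careful bookkeeping that makes \eqref{1E_s2} come out in the stated form.
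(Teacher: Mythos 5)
Your proposal is correct and follows essentially the same route as the paper: the $\Lambda^{-s}$ energy estimate with the weighted functional so that all linear couplings cancel, the Hardy--Littlewood--Sobolev embedding of Lemma \ref{Riesz lemma} combined with H\"older's inequality splitting each quadratic term into an $L^2$ factor and an $L^{3/s}$ factor, and the case split at $s=1/2$ with the interpolation $\norm{(n_1,B)}_{L^{3/s}}\lesssim\norm{(n_1,B)}_{L^2}^{s-1/2}\norm{\na(n_1,B)}_{L^2}^{3/2-s}$ reserved for the degenerately dissipative variables. No gaps; this matches the paper's proof of Lemma \ref{negative Sobolev norm} in both structure and detail.
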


\begin{proof}
The $\Lambda^{-s}$ $(s>0)$ energy estimate of $\eqref{yi}_{1}$--$\eqref{yi}_{6}$ yield
\begin{eqnarray}  \label{1E_s_0}
&&\frac{d}{dt}\left(\frac{1}{2}\norm{(n_1,n_2,u_1,u_2)}_{\dot{H}^{-s}}^2+\norm{(E,B)}_{\dot{H}^{-s}}^2\right)  +\nu\norm{(u_1,u_2)}_{\dot{H}^{-s}}^2\nonumber
\\&&\quad=\int\Lambda^{-s}g_1\cdot\Lambda^{-s} n_1+\int \Lambda^{-s}\left(g_2+u_2\times B\right)
\cdot\Lambda^{-s} u_1 \nonumber\\
&&\qquad+\int\Lambda^{-s}g_3\cdot\Lambda^{-s} n_2+\int \Lambda^{-s}\left(g_4+u_1\times B\right)
\cdot\Lambda^{-s} u_2 +2\int\Lambda^{-s}g_5\cdot\Lambda^{-s} E\nonumber\\
&&\quad\lesssim\norm{g_1}_{\dot{H}^{-s}} \norm{n_1}_{\dot{H}^{-s}}+\norm{g_2+u_2\times B}_{\dot{H}^{-s}} \norm{u_1}_{\dot{H}^{-s}}\nonumber\\
&&\qquad+\norm{g_3}_{\dot{H}^{-s}} \norm{n_2}_{\dot{H}^{-s}}+\norm{ g_4+u_1\times B}_{\dot{H}^{-s}} \norm{u_2}_{\dot{H}^{-s}}+\norm{g_5}_{\dot{H}^{-s}} \norm{E}_{\dot{H}^{-s}}.
\end{eqnarray}

We now restrict the value of $s$ in order to estimate the other terms on
the right-hand side of \eqref{1E_s_0}. If $s\in (0,1/2]$, then $1/2+s/3<1$
and $3/s\geq 6$. Then applying Lemma \ref{Riesz lemma}, together with H\"{o}lder's, Sobolev's and Young's inequalities, we
obtain
\begin{equation}
\begin{split}
\norm{ u_1\cdot\na u_2 }
_{\dot{H}^{-s}}
&  \lesssim \norm{ u_1\cdot\na u_2 }_{L^{\frac{1}{1/2+s/3}}}
 \lesssim \norm{u_1}_{L^{3/s}}
\norm{\na u_2 }_{L^{2}} \\
&\lesssim \norm{\na u_1}_{L^{2}}^{1/2+s}
\norm{\na^2u_1 }_{L^{2}}^{1/2-s}\norm{\na u_2}_{L^2}  \\
& \lesssim \norm{ \na u_1}_{H^{1}}^{2}+\norm{\na u_2}_{L^{2}}^{2}.\nonumber
\end{split}
\end{equation}
We can similarly bound the other terms in the $g_1\sim g_5$ and $(u_1+u_2)\times B$. So we have
\begin{equation}\label{sum yi}
\begin{split}
&\sum_{i=1}^5\norm{g_i}_{\dot{H}^{-s}}+\norm{(u_1+u_2)\times B}_{\dot{H}^{-s}}\lesssim\norm{(n_2,u_1,u_2)}_{H^2}^2+\norm{\na(n_1,
B)}_{H^1}^2.
\end{split}
\end{equation}

Now if $s\in (1/2,3/2)$, we shall estimate the right-hand side of
\eqref{1E_s_0} in a different way. Since $s\in
(1/2,3/2)$, we have that $1/2+s/3<1$ and $2<3/s<6$. Then applying Lemma \ref{Riesz lemma} and using
(different) Sobolev's inequality, we have
\begin{equation}
\begin{split}
\norm{ u_1\cdot\na u_2 }_{\dot{H}^{-s}} &\lesssim \norm{ u_1}_{L^{3/s}}\norm{\na u_2 }_{L^{2}} \lesssim \norm{u_1}_{L^{2}}^{s-1/2}
\norm{\na u_1 }_{L^{2}}^{3/2-s}\norm{\na u_2}_{L^2}  \\
&\lesssim  \norm{u_1}_{H^{1}}^{2}+\norm{\na u_2}_{L^{2}}^{2}.\nonumber
\end{split}
\end{equation}
In particular, we must be careful about the terms involved with $n_1$ and $B$ since they are both degenerately dissipative. For example,
\begin{equation}
\begin{split}
&\norm{  n_1\na n_2 }_{\dot{H}^{-s}}\lesssim\norm{n_1}
_{L^{2}}^{s-1/2}\norm{ \na  n_1}_{L^{2}}^{3/2-s}\norm{\na n_2}
_{L^{2}};\\
&\norm{  u_2\times B}_{\dot{H}^{-s}}\lesssim\norm{B}
_{L^{2}}^{s-1/2}\norm{ \na B}_{L^{2}}^{3/2-s}\norm{u_2}
_{L^{2}}.\nonumber
\end{split}
\end{equation}
Then, we have
\begin{eqnarray}\label{sum er}
&&\sum_{i=1}^5\norm{g_i}_{\dot{H}^{-s}}+\norm{(u_1+u_2)\times B}_{\dot{H}^{-s}}\lesssim \norm{(\na n_1,n_2,u_1,u_2)}_{H^{1}}^{2}\nonumber\\
&&\qquad\qquad+ \norm{(n_1,B)}
_{L^{2}}^{s-1/2}\norm{ \na (n_1,B)}_{L^{2}}^{3/2-s}\norm{(\na n_1,\na n_2, u_1,u_2,\na u_1,\na u_2)}_{L^{2}}.
\end{eqnarray}
Hence, we deduce \eqref{1E_s} from \eqref{sum yi} and \eqref{1E_s2} from \eqref{sum er}.
\end{proof}

%%%%%%%%%%%%%%%%%%%%%%%%%%%%%%%%%%%%%%%%%%%%%%%%%%%%%%%%%%%%%%%%%%%%%%%%%%%%%%
\subsection{Negative Besov estimates}
%%%%%%%%%%%%%%%%%%%%%%%%%%%%%%%%%%%%%%%%%%%%%%%%%%%%%%%%%%%%%%%%%%%%%%%%%%%%%%

In this subsection, we will derive the evolution of the negative Besov
norms of $U:=(n_1,n_2,u_1,u_2,E,B)$. The argument is similar to the previous subsection.

\begin{lemma}\label{negative Besov norm}
For $s\in(0,1/2]$, we have
\begin{equation}
\begin{split}
\frac{d}{dt}\norm{U}_{\dot{B}_{2,\infty}^{-s}}^2 +\la\norm{(u_1,u_2)}_{\dot{B}_{2,\infty}^{-s}}^2\lesssim \left(\norm{(n_2,u_1,u_2)}_{H^2}^2+\norm{\na(n_1,
B)}_{H^1}^2 \right)\norm{U}_{\dot{B}_{2,\infty}^{-s}};\nonumber
\end{split}
\end{equation}
and for $s\in(1/2,3/2]$, we have
\begin{equation}
\begin{split}
&\frac{d}{dt}\norm{U}_{\dot{B}_{2,\infty}^{-s}}^2 +\la\norm{(u_1,u_2)}_{\dot{B}_{2,\infty}^{-s}}^2\lesssim\norm{(\na n_1,n_2,u_1,u_2)}_{H^{1}}^{2}\norm{U}_{\dot{B}_{2,\infty}^{-s}}\\
 &\quad+\norm{(n_1,B)}_{L^2}^{s-1/2}\norm{\na (n_1,B)}_{L^2}^{3/2-s}\norm{(\na n_1,\na n_2,u_1,u_2,\na u_1,\na u_2)}_{L^2}\norm{U}_{\dot{B}_{2,\infty}^{-s}}.\nonumber
\end{split}
\end{equation}
\end{lemma}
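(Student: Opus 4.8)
The plan is to mirror the proof of Lemma \ref{negative Sobolev norm}, replacing the homogeneous Sobolev machinery by its Littlewood--Paley counterpart. The essential structural difference is that $\norm{U}_{\dot{B}_{2,\infty}^{-s}}=\sup_{j\in\Z}2^{-sj}\norm{\dot{\Delta}_j U}_{L^2}$ is a supremum rather than an $\ell^2$ sum, so the energy estimate must be localized in frequency. First I would apply the dyadic block $\dot{\Delta}_j$ to each of $\eqref{yi}_1$--$\eqref{yi}_6$, take the $L^2$ inner product with the corresponding component of $\dot{\Delta}_j U$, sum, and integrate by parts. Because $\dot{\Delta}_j$ is a Fourier multiplier it commutes with every constant-coefficient operator, so the relaxation terms again produce the dissipation $\nu\norm{\dot{\Delta}_j(u_1,u_2)}_{L^2}^2$, while the skew-symmetric couplings (the $B_\infty$ cross terms and the Maxwell coupling, together with the constraint $\eqref{yi}_7$) cancel exactly as in \eqref{1E_s_0}. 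This yields, for each fixed $j$,
\begin{equation}
\frac{d}{dt}\left(\tfrac12\norm{\dot{\Delta}_j(n_1,n_2,u_1,u_2)}_{L^2}^2+\norm{\dot{\Delta}_j(E,B)}_{L^2}^2\right)+\nu\norm{\dot{\Delta}_j(u_1,u_2)}_{L^2}^2\lesssim\Big(\sum_{i=1}^5\norm{\dot{\Delta}_j g_i}_{L^2}+\norm{\dot{\Delta}_j\big((u_1+u_2)\times B\big)}_{L^2}\Big)\norm{\dot{\Delta}_j U}_{L^2}.\nonumber
\end{equation}
I would then multiply by $2^{-2sj}$ and bound the right-hand side uniformly in $j$ using $2^{-sj}\norm{\dot{\Delta}_j g_i}_{L^2}\le\norm{g_i}_{\dot{B}_{2,\infty}^{-s}}$ and $2^{-sj}\norm{\dot{\Delta}_j U}_{L^2}\le\norm{U}_{\dot{B}_{2,\infty}^{-s}}$, after which the supremum over $j$ may be taken; the differential inequality is understood in the standard integrated sense, commuting $\frac{d}{dt}$ with $\sup_j$ by the argument of Sohinger and Strain \cite{SS}.

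The second step is the estimation of the nonlinear Besov norms, and here the argument runs completely parallel to Lemma \ref{negative Sobolev norm}: the only change is to replace the Riesz-potential embedding Lemma \ref{Riesz lemma} by the Besov embedding Lemma \ref{Lp embedding}, namely $\norm{f}_{\dot{B}_{2,\infty}^{-s}}\lesssim\norm{f}_{L^p}$ with the \emph{same} exponent relation $1/2+s/3=1/p$. With this substitution every summand of $g_1,\dots,g_5$ and of $(u_1+u_2)\times B$ is controlled exactly as before. For instance, for $s\in(0,1/2]$ (so $3/s\ge6$),
$$\norm{u_1\cdot\na u_2}_{\dot{B}_{2,\infty}^{-s}}\lesssim\norm{u_1\cdot\na u_2}_{L^{1/(1/2+s/3)}}\lesssim\norm{u_1}_{L^{3/s}}\norm{\na u_2}_{L^2}\lesssim\norm{\na u_1}_{H^1}^2+\norm{\na u_2}_{L^2}^2,$$
and summing the analogous bounds reproduces the right-hand side of \eqref{sum yi}; for $s\in(1/2,3/2]$ (so $2<3/s\le6$) the same low-frequency Sobolev interpolation used in \eqref{sum er}, applied with care to the degenerately dissipative fields $n_1$ and $B$, produces the borderline factors $\norm{(n_1,B)}_{L^2}^{s-1/2}\norm{\na(n_1,B)}_{L^2}^{3/2-s}$. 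Combining the two steps then gives the two inequalities in the statement.

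The genuinely new ingredient, and the only real obstacle, is the supremum structure of $\dot{B}_{2,\infty}^{-s}$: all the work is in passing correctly from the frequency-localized estimate to the $\sup_j$ norm without destroying the uniform-in-$j$ bound, which is the bookkeeping indicated above. A secondary point worth checking is the endpoint $s=3/2$, which the Sobolev lemma had to exclude but which Lemma \ref{Lp embedding} with $p=1$ (i.e. $L^1\hookrightarrow\dot{B}_{2,\infty}^{-3/2}$) now admits: there the interpolation exponents become $s-1/2=1$ and $3/2-s=0$, so the factor $\norm{\na(n_1,B)}_{L^2}^{3/2-s}$ degenerates harmlessly to a constant and the estimate still closes. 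All remaining manipulations are routine and identical to the Sobolev case.
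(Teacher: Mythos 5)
Your proposal is correct and follows essentially the same route as the paper: the paper likewise performs the $\dot{\Delta}_j$ energy estimate on $\eqref{yi}_1$--$\eqref{yi}_6$, multiplies by $2^{-2sj}$, takes the supremum over $j$, and then repeats the nonlinear estimates of Lemma \ref{negative Sobolev norm} verbatim with Lemma \ref{Lp embedding} in place of Lemma \ref{Riesz lemma}, noting that the endpoint $s=3/2$ is now admissible. Your extra remarks on commuting $\sup_j$ with $\frac{d}{dt}$ and on the degeneration of the exponent $3/2-s$ at the endpoint are consistent with (and slightly more explicit than) the paper's treatment.
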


\begin{proof}
The $\dot{\Delta}_{j}$ energy estimates of $\eqref{yi}_1$--$\eqref{yi}_6$ yield, with multiplication of $2^{-2sj}$ and then taking the supremum over ${j\in\mathbb{Z}}$,
\begin{equation}
\begin{split}
&\frac{d}{dt}\left(\frac12\norm{(n_1,n_2,u_1,u_2)}_{\dot{B}_{2,\infty}^{-s}}^2+\norm{(E,B)}_{\dot{B}_{2,\infty}^{-s}}^2\right) +\nu \norm{(u_1,u_2)}_{\dot{B}_{2,\infty}^{-s}}^2\\
&\lesssim\sup\limits_{j\in\mathbb{Z}}2^{-2sj}\int  \dot{\Delta}_{j}g_1
\cdot\dot{\Delta}_{j} n_1 + \dot{\Delta}_{j}\left(g_2+u_2\times B\right)
\cdot\dot{\Delta}_{j} u_1 \\
&\quad+\sup\limits_{j\in\mathbb{Z}}2^{-2sj}\int  \dot{\Delta}_{j}g_3
\cdot\dot{\Delta}_{j} n_2+  \dot{\Delta}_{j}\left(g_4+u_1\times B\right)
\cdot\dot{\Delta}_{j} u_2 +2\dot{\Delta}_{j}g_5\cdot\dot{\Delta}_{j} E \\
&\lesssim\norm{g_1}_{\dot{B}_{2,\infty}^{-s}} \norm{n_1}_{\dot{B}_{2,\infty}^{-s}}+
\norm{ g_2+u_2\times B}_{\dot{B}_{2,\infty}^{-s}} \norm{u_1}_{\dot{B}_{2,\infty}^{-s}}\\
&\quad+\norm{g_3}_{\dot{B}_{2,\infty}^{-s}} \norm{n_2}_{\dot{B}_{2,\infty}^{-s}}+\norm{ g_4+u_1\times B}_{\dot{B}_{2,\infty}^{-s}} \norm{u_2}_{\dot{B}_{2,\infty}^{-s}}+\norm{g_5}_{\dot{B}_{2,\infty}^{-s}} \norm{E}_{\dot{B}_{2,\infty}^{-s}}.\nonumber
\end{split}
\end{equation}
Then the proof is exactly the same as the proof of Lemma \ref{negative Sobolev norm} except that we should apply Lemma \ref{Lp embedding} instead to estimate the $\dot{B}_{2,\infty}^{-s}$ norm. Note that we allow $s=3/2$.
\end{proof}

%%%%%%%%%%%%%%%%%%%%%%%%%%%%%%%%%%%%%%%%%%%%%%%%%%%%%%%%%%%%%%%%%%%%%%%%%%%%%%
\section{Proof of theorems}\label{section3}
%%%%%%%%%%%%%%%%%%%%%%%%%%%%%%%%%%%%%%%%%%%%%%%%%%%%%%%%%%%%%%%%%%%%%%%%%%%%%%

%%%%%%%%%%%%%%%%%%%%%%%%%%%%%%%%%%%%%%%%%%%%%%%%%%%%%%%%%%%%%%%%%%%%%%%%%%%%%%
\subsection{Proof of Theorem \ref{existence}}
%%%%%%%%%%%%%%%%%%%%%%%%%%%%%%%%%%%%%%%%%%%%%%%%%%%%%%%%%%%%%%%%%%%%%%%%%%%%%%

In this subsection, we will prove the unique global solution to the system \eqref{yi}, and the key point is that we only assume the $H^3$ norm of initial data is small.

{\it Step 1. Global small $\mathcal{E}_3$ solution.}

We first close the energy estimates at the $H^3$ level by assuming a priori that $ \sqrt{\mathcal{E}_3(t)}\le \delta$ is sufficiently small.
Taking $k=0,1$ in \eqref{energy 1} of Lemma \ref{energy lemma} and then summing up, we obtain
\begin{equation} \label{end 1}
\begin{split}
\frac{d}{dt}\sum_{l=0}^3 \norm{ \na^{l}U}_{L^2}^2 +\la\sum_{l=0}^3\norm{\na^{l}(u_1,u_2)}_{L^2}^2\lesssim  \sqrt{\mathcal{E}_3}\mathcal{D}_3+\sqrt{\mathcal{D}_3}\sqrt{\mathcal{D}_3}\sqrt{\mathcal{E}_3} \lesssim  \delta\mathcal{D}_3.\end{split}
\end{equation}
Taking $k=0,1$ in \eqref{other dissipation} of Lemma \ref{other di} and then summing up, we obtain
\begin{eqnarray}  \label{end 2}
&&\frac{d}{dt}\left(\sum_{l=0}^2\int \na^lu_1\cdot\na\na^{l} n_1+\na^lu_2\cdot\na\na^{l} n_2 +\sum_{l=0}^2\int \na^{l}u_2\cdot\na^lE  -\eta\sum_{l=0}^1\int
\na^l E \cdot\na^{l}\na\times B \right)\nonumber
\\&&\quad+\la \left(\sum_{l=1}^3\norm{\na^{l}n_1}_{L^2}^2+\sum_{l=0}^3\norm{\na^{l}n_2}_{L^2}^2+\sum_{l=0}^2\norm{\na^{l}E}_{L^2}^2+ \sum_{l=1}^2 \norm{\na^{l} B}_{L^2}^2\right)\nonumber
\\ &&\qquad\lesssim
 \sum_{l=0}^3\norm{\na^{l} (u_1,u_2)}_{L^2}^2
 +\delta^2\mathcal{D}_3 .
\end{eqnarray}
Since $\delta$ is small, we deduce from $\eqref{end 2}\times\varepsilon+\eqref{end 1}$ that there exists an instant energy functional $\widetilde{\mathcal{E}}_3$ equivalent to ${\mathcal{E}}_3$ such that
\begin{equation*}
\frac{d}{dt}\widetilde{\mathcal{E}}_3+\mathcal{D}_3\le 0.
\end{equation*}
Integrating the inequality above directly in time, we obtain \eqref{energy inequality}. By a standard continuity argument, we then close the a priori estimates if we assume at initial time that $\mathcal{E}_3(0)\le \delta_0$ is sufficiently small. This concludes the unique global small $\mathcal{E}_3$ solution.

{\it Step 2. Global $\mathcal{E}_N$ solution.}

We shall prove this by an induction on $N\ge 3$. By \eqref{energy inequality}, then \eqref{energy inequality N} is valid for $N=3$. Assume \eqref{energy inequality N} holds for $N-1$ (then now $N\ge 4$). Taking $k=0,\dots,N-2$ in \eqref{energy 1} of Lemma \ref{energy lemma} and then summing up, we obtain
\begin{eqnarray} \label{end 3}
&&\frac{d}{dt}\sum_{l=0}^{N} \norm{ \na^{l}U}_{L^2}^2 +\la\sum_{l=0}^{N}\norm{\na^{l} (u_1,u_2)}_{L^2}^2 \nonumber\\&& \quad\le C_N \sqrt{\mathcal{D}_{N-1}}\sqrt{\mathcal{E}_N}\sqrt{\mathcal{D}_N}+C\sqrt{\mathcal{D}_{N-1}}\sqrt{\mathcal{D}_N}\sqrt{\mathcal{E}_N}
\le C_N \sqrt{\mathcal{D}_{N-1}}\sqrt{\mathcal{E}_N}\sqrt{\mathcal{D}_N}.
\end{eqnarray}
Here we have used the fact that $3\le \frac{N-2}{2}+2\le N-2+1$ since $N\ge 4$.
Note that it is important that we have put the two first factors in \eqref{energy 1} into the dissipation.

Taking  $k=0,\dots,N-2$ in \eqref{other dissipation} of Lemma \ref{other di} and then summing up, we obtain
\begin{eqnarray}  \label{end 4}
&&\frac{d}{dt}\left(\sum_{l=0}^{N-1}\int \na^lu_1\cdot\na\na^{l} n_1+\na^lu_2\cdot\na\na^{l} n_2 +\sum_{l=0}^{N-1}\int \na^{l}u_2\cdot\na^lE -\eta\sum_{l=0}^{N-2}\int
\na^l E \cdot\na\times\na^{l} B \right)\nonumber
\\&&\quad+\la \left(\sum_{l=1}^{N}\norm{\na^{l}n_1}_{L^2}^2+\sum_{l=0}^{N}\norm{\na^{l}n_2}_{L^2}^2+\sum_{l=0}^{N-1}\norm{\na^{l}E}_{L^2}^2+ \sum_{l=1}^{N-1} \norm{\na^{l} B}_{L^2}^2\right)\nonumber
 \\ &&\qquad\le
 C\sum_{l=0}^{N}\norm{\na^{l} (u_1,u_2)}_{L^2}^2
 +C_N  \sqrt{\mathcal{D}_{N-1}}\sqrt{\mathcal{D}_N}\sqrt{\mathcal{E}_N}.
\end{eqnarray}
We deduce from $\eqref{end 4}\times\varepsilon+\eqref{end 3}$ that there exists an instant energy functional $\widetilde{\mathcal{E}}_N$ equivalent to $\mathcal{E}_N$ such that, by Cauchy's inequality,
\begin{equation*}
\frac{d}{dt}\widetilde{\mathcal{E}}_N+\mathcal{D}_N\le  C_N \sqrt{\mathcal{D}_{N-1}}\sqrt{\mathcal{E}_N}\sqrt{\mathcal{D}_N}
 \le \varepsilon \mathcal{D}_N+ C_{N,\varepsilon} {\mathcal{D}_{N-1}} {\mathcal{E}_N}.
\end{equation*}
This implies
\begin{equation*}
\frac{d}{dt}\widetilde{\mathcal{E}}_N+\frac{1}{2}\mathcal{D}_N
 \le  C_{N } {\mathcal{D}_{N-1}} {\mathcal{E}_N}.
\end{equation*}
We then use the standard Gronwall lemma and the induction hypothesis to deduce that
\begin{eqnarray*}
 \mathcal{E}_N(t)+\int_0^t\mathcal{D}_N(\tau)\,d\tau
&& \le  C\mathcal{E}_N(0)e^{C_{N }\int_0^t{\mathcal{D}_{N-1}} (\tau)\,d\tau}
\\&&\le  C\mathcal{E}_N(0)e^{C_{N }P_{N-1}\left(\mathcal{E}_{N-1}(0)\right)}\\&&\le  C\mathcal{E}_N(0)e^{C_{N }P_{N-1}\left(\mathcal{E}_{N}(0)\right)}\equiv P_N\left(\mathcal{E}_{N}(0)\right).
\end{eqnarray*}
This concludes the global $\mathcal{E}_N$ solution. The proof of Theorem \ref{existence} is completed.\hfill$\Box$

\subsection{Proof of Theorem \ref{decay}}
In this subsection, we will prove the various time decay rates of the unique global solution to the system \eqref{yi} obtained in Theorem \ref{existence}. Fix $N\ge 5$. We need to assume that $\mathcal{E}_N(0)\le \delta_0=\delta_0(N)$ is small. Then Theorem \ref{existence} implies that there exists a unique global $\mathcal{E}_N$ solution, and $\mathcal{E}_N(t)\le P_{N}\left(\mathcal{E}_N(0)\right) \le \delta_0$ is small for all time $t$. Since now our $\delta_0$ is relative small with respect to $N$, we just ignore the $N$ dependence of the constants in the energy estimates in the previous section.

{\it Step 1. Basic decay.}

For the convenience of presentations, we define a family of energy functionals and the corresponding dissipation rates with {\it minimum derivative counts} as
\begin{equation}\label{1111}
\mathcal{E}_{k}^{k+2}=\sum_{l=k}^{k+2}\norm{ \na^{l}U}_{L^2}^2
\end{equation}
and
\begin{equation}\label{2222}
\mathcal{D}_{k}^{k+2}=\sum_{l=k+1}^{k+2}\norm{\na^{l}n_1}_{L^2}^2+\sum_{l=k}^{k+2}\norm{\na^{l}(n_2,u_1,u_2)}_{L^2}^2+\sum_{l=k}^{k+1}\norm{\na^{l}E}_{L^2}^2+ \norm{\na^{k+1} B}_{L^2}^2.
\end{equation}

By Lemma \ref{energy lemma}, we have that for $k=0,\dots,N-2$,
\begin{eqnarray}  \label{end 5}
&&\frac{d}{dt}\sum_{l=k}^{k+2}\norm{ \na^{l}U}_{L^2}^2+\la\sum_{l=k}^{k+2}\norm{\na^{l} (u_1,u_2)}_{L^2}^2\nonumber \\
&&\quad\lesssim \sqrt{\delta_0} \mathcal{D}_k^{k+2} +\norm{(n_2,u_1,u_2)}_{L^\infty}\norm{\na^{k+2}(n_1,n_2,u_1,u_2)}_{L^2}
\norm{ \na^{k+2}( E,  B )}_{L^2}.
\end{eqnarray}
By Lemma \ref{other di}, we have that for $k=0,\dots,N-2$,
\begin{eqnarray} \label{end 6}
&&\frac{d}{dt}\left(\sum_{l=k}^{k+1}\int \na^lu_1\cdot\na\na^{l}n_1+\na^lu_2\cdot\na\na^{l}n_2 +\sum_{l=k}^{k+1}\int \na^{l}u_2\cdot\na^lE  -\eta\int
\na^k E \cdot\na^{k}\na\times B \right)\nonumber
\\&&\quad+\la \left(\sum_{l=k+1}^{k+2}\norm{\na^{l}n_1}_{L^2}^2+\sum_{l=k}^{k+2}\norm{\na^{l}n_2}_{L^2}^2+\sum_{l=k}^{k+1}\norm{\na^{l}E}_{L^2}^2+ \norm{\na^{k+1} B}_{L^2}^2\right)\nonumber
\\ &&\qquad\lesssim\sum_{l=k}^{k+2}\norm{\na^{l} (u_1,u_2)}_{L^2}^2
 +\delta_0 \sum_{l=k}^{k+2}\norm{\na^{l} (u_1, u_2)}_{L^2}^2.
\end{eqnarray}
Since $\delta_0$ is small, we deduce from $\eqref{end 6}\times\varepsilon+\eqref{end 5}$ that there exists an instant energy functional $\widetilde{\mathcal{E}}_k^{k+2}$ equivalent to $\mathcal{E}_k^{k+2}$ such that
\begin{equation}\label{energy}
\frac{d}{dt}\widetilde{\mathcal{E}}_k^{k+2}+\mathcal{D}_k^{k+2}\lesssim   \norm{(n_2,u_1,u_2)}_{L^\infty}\norm{\na^{k+2}(n_1,n_2,u_1,u_2)}_{L^2}
\norm{ \na^{k+2}( E,  B )}_{L^2}.
\end{equation}
Note that we can not absorb the right-hand side of \eqref{energy} by the dissipation $\mathcal{D}_k^{k+2}$ since it does not contain $\norm{\na^{k+2}( E, B )}_{L^2}^2$. We will distinct the arguments by the value of $k$. If $k=0$ or $k=1$, we bound $\norm{\na^{k+2}( E, B )}_{L^2}$ by the energy. Then we have that for $k=0,1$,
\begin{equation*}
\frac{d}{dt}\widetilde{\mathcal{E}}_k^{k+2}+\mathcal{D}_k^{k+2}\lesssim   \sqrt{\mathcal{D}_k^{k+2}}\sqrt{\mathcal{D}_k^{k+2}}\sqrt{{\mathcal{E}}_3}\lesssim \sqrt{\delta_0}\mathcal{D}_k^{k+2},
\end{equation*}
which implies
\begin{equation*}
\frac{d}{dt}\widetilde{\mathcal{E}}_k^{k+2}+\mathcal{D}_k^{k+2}\le 0.
\end{equation*}
If $k\ge 2$, we have to bound $\norm{\na^{k+2}( E, B )}_{L^2}$ in term of $\norm{\na^{k+1}( E, B )}_{L^2}$ since $\sqrt{\mathcal{D}_k^{k+2}}$ can not control $\norm{(n_2,u_1,u_2)}_{L^\infty}$. The key point is to use the regularity interpolation method developed in \cite{GW,SG06}. By Lemma \ref{A1}, we have
\begin{eqnarray}\label{kkk}
&&\norm{(n_2,u_1,u_2)}_{L^\infty}\norm{ \na^{k+2}(n_1,n_2,u_1,u_2)}_{L^2} \norm{ \na^{k+2}( E, B )}_{L^2}\nonumber\\&&\quad\lesssim\norm{(n_2,u_1,u_2)}_{L^2}^{1-\frac{3 }{2k}}\norm{\na^{k}(n_2,u_1,u_2)}_{L^2}^{\frac{3 }{2k}}\norm{\na^{k+2}(n_1,n_2,u_1,u_2)}_{L^2}\nonumber\\
&&\qquad\cdot\norm{\na^{k+1}( E, B )}_{L^2}^{1-\frac{3 }{2k}}
\norm{\na^{\al}( E, B )}_{L^2}^{\frac{3 }{2k}},
\end{eqnarray}
where $\alpha$ is defined by
\begin{equation*}
 k+2=(k+1)\times\left(1-\frac{3 }{2k}\right)+\alpha\times \frac{3 }{2k}
 \Longrightarrow \alpha=\frac{5}{3}k+1.
\end{equation*}
Hence, for $k\ge 2$, if $N\ge \frac{5}{3}k+1\Longleftrightarrow 2\le k\le \frac{3}{5}(N-1)$, then by \eqref{kkk}, we deduce from \eqref{energy} that
\begin{equation*}
\frac{d}{dt}\widetilde{\mathcal{E}}_k^{k+2}+\mathcal{D}_k^{k+2} \lesssim  \sqrt{{\mathcal{E}}_{N}} {\mathcal{D}_k^{k+2}}\lesssim \sqrt{\delta_0}\mathcal{D}_k^{k+2},
\end{equation*}
which allow us to arrive at that for any integer $k$ with $0\le k\le  \frac{3}{5}(N-1) $ (note that $N-2\ge \frac{3}{5}(N-1)\ge 2$ since $N\ge 5$), we have
\begin{equation}\label{k energy}
\frac{d}{dt}\widetilde{\mathcal{E}}_k^{k+2}+\mathcal{D}_k^{k+2} \le 0.
\end{equation}

We now begin to derive the decay rate from \eqref{k energy}. In fact, we
have proved \eqref{H-sbound} or \eqref{H-sbound Besov} in the similar fashion of \cite{TWW} by utilizing Lemma \ref{negative Sobolev norm} and \ref{negative Besov norm}. Using Lemma
\ref{1-sinte}, we have that for $s\ge 0$ and $k+s>0$,
\begin{equation*}
\norm{\na^k (n_1,B) }_{L^2} \le  \norm{(n_1,B)}_{\dot{H}^{-s}}^{\frac{1}{k+1+s}}\norm{\na^{k+1}(n_1,B)}_{L^2}^{\frac{k+s}{k+1+s}}
 \le  C_0\norm{\na^{k+1}(n_1,B)}_{L^2}^{\frac{k+s}{k+1+s}}.
\end{equation*}
Similarly, using Lemma
\ref{Besov interpolation}, we  have that for $s>0$ and $k+s>0$,
\begin{equation*}
\norm{\na^k (n_1,B) }_{L^2} \le  \norm{(n_1,B)}_{\dot{B}_{2,\infty}^{-s}}^{\frac{1}{k+1+s}}\norm{\na^{k+1}(n_1,B)}_{L^2}^{\frac{k+s}{k+1+s}}
 \le  C_0\norm{\na^{k+1}(n_1,B)}_{L^2}^{\frac{k+s}{k+1+s}}.
\end{equation*}
On the other hand, for $k+2<N$, we have
\begin{equation*}
\norm{ \na^{k+2}( E, B )}_{L^2}\le  \norm{\na^{k+1}( E, B )}_{L^2}^{ \frac{N-k-2}{N-k-1}}
\norm{\na^N( E, B )}_{L^2}^{\frac{1}{N-k-1}}\le  C_0\norm{\na^{k+1}( E, B )}_{L^2}^{ \frac{N-k-2}{N-k-1}}.
\end{equation*}
Then we deduce from \eqref{k energy} that
\begin{equation*}
\frac{d}{dt}\widetilde{\mathcal{E}}_k^{k+2}+\left\{\mathcal{E}_k^{k+2}\right\}^{1+\vartheta} \le 0,
\end{equation*}
where $\vartheta=\max\left\{\frac{1}{k+s},\frac{1}{N-k-2}\right\}$.
Solving this inequality directly, we obtain in particular that
\begin{equation}\label{nnn}
\mathcal{E}_k^{k+2}  (t) \le \left\{\left[\mathcal{E}_k^{k+2}(0)\right]^{-\vartheta}+\vartheta  t\right\}^{-  {1}/{\vartheta}}
\le C_0 (1+ t)^{- {1}/{\vartheta}}=C_0 (1+ t)^{-\min\left\{ {k+s}, {N-k-2}\right\}}.
\end{equation}
Notice that \eqref{nnn} holds also for $k+s=0$ or $k+2=N$. So, if we want to obtain the optimal decay rate of the whole solution for the spatial derivatives of order $k$, we only need to assume $N$ large enough (for fixed $k$ and $s$) so  that $k+s\le N-k-2$. Thus we should require that
\begin{equation}
N\ge \max\left\{k+2, \frac{5}{3}k+1, 2k+2+s\right\}= 2k+2+s.\nonumber
\end{equation}
This proves the optimal decay \eqref{basic decay}.

{\it Step 2. Further decay.}

We first prove \eqref{further decay1} and \eqref{further decay11}. First, noticing that $-\nu g={\rm div} E$, by \eqref{basic decay} and Lemma \ref{A2}, if $N\ge2k+4+s$, then
\begin{equation}\label{n further decay}
\norm{\na^kn_2(t)}_{L^2}\lesssim\norm{\na^kg(t)}_{L^2}\lesssim\norm{\na^{k+1}E(t)}_{L^2}\lesssim C_0(1+t)^{-\frac{k+1+s}{2}}.
\end{equation}

Next, applying $\na^k$ to $\eqref{yi}_2, \eqref{yi}_4, \eqref{yi}_5$ and then multiplying the resulting identities
by $\na ^ku_1,$ $\na^ku_2,$ $\na^kE$ respectively, summing up and integrating over $\mathbb{R}^3$, we obtain
\begin{eqnarray}\label{uE yi}
&&\frac{d}{dt}\int\left(\frac{1}{2}\norms{\na^k(u_1,u_2)}^2+\norms{\na^kE}^2\right)+\nu \norm{\na^k(u_1,u_2)}_{L^2}^2\nonumber\\
&&\quad=\int\na^k\left(-\na n_1+g_2+u_2\times B\right)\cdot\na^ku_1+\int\na^k\left(-\na n_2+g_4+u_1\times B\right)\cdot\na^ku_2\nonumber\\
&&\qquad+2\nu\int\na^k\left( \na\times B+ g_5\right)\cdot\na^kE\nonumber\\
&&\quad\lesssim \norm{\na^{k+1} n_1}_{L^2} \norm{\na^{k} u_1}_{L^2}  +\norm{\na^{k}\left(g_2+u_2\times B\right)}_{L^2}\norm{\na^{k} u_1}_{L^2}\nonumber\\
&&\qquad+\norm{\na^{k+1} n_2}_{L^2} \norm{\na^{k} u_2}_{L^2}  +\norm{\na^{k}\left(g_4+u_1\times B\right)}_{L^2}\norm{\na^{k} u_2}_{L^2}\nonumber\\
&&\qquad+\norm{ \na^{k}\left(\na\times B+g_5
\right)}_{L^2}\norm{\na^{k} E}_{L^2}.
\end{eqnarray}
On the other hand, taking $l=k$ in \eqref{Eyi}, we may have
\begin{eqnarray} \label{uE san}
-\int  \na^k \partial_tu_2 \cdot\na^{k}E +2\nu \norm{\na^{k} E}_{L^2}^2&&\le \int \na\na^{k} n_2\cdot \na^{k}E +C \norm{\na^{k}(u_1,u_2)}_{L^2}\norm{\na^{k}  E}_{L^2}\nonumber\\
&&\quad+\norm{\na^{k}\left(g_4+u_1\times B\right)}_{L^2}\norm{\na^{k } E}_{L^2}.
\end{eqnarray}
Substituting \eqref{Eer} with $l=k$ into \eqref{uE san}, we may then have
\begin{eqnarray}\label{uE wu}
-&&\frac{d}{dt}\int  \na^ku_2 \cdot\na^{k}E +2\nu \norm{\na^{k} E}_{L^2}^2\nonumber\\
 &&\quad\lesssim \norm{\na^{k} u_2}_{L^2}^2+\left(\norm{\na^{k+1} n_2}_{L^2}+ \norm{\na^{k}(u_1,u_2)}_{L^2}\right)\norm{\na^{k}  E}_{L^2}\nonumber\\
 &&\qquad+\norm{ \na^{k}\left(\na\times B+g_5
\right)}_{L^2}\norm{\na^{k} u_2}_{L^2} +\norm{\na^{k}\left(g_4+u_1\times B\right)}_{L^2}\norm{\na^{k } E}_{L^2}.
\end{eqnarray}
Since $\varepsilon$ is small, we deduce from $\eqref{uE wu}\times\varepsilon$$+\eqref{uE yi}$ that there exists $\mathcal{F}_k(t)\sim\norm{\na^k(u_1,u_2,E)(t)}_{L^2}^2$ such that, by Cauchy's inequality, Lemma \ref{commutator}, \eqref{Iyi kk+1}, \eqref{I3 k}, \eqref{I4 k k+1}, \eqref{basic decay} and \eqref{n further decay},
\begin{eqnarray}\label{uE liu}
 &&\frac{d}{dt}\mathcal{F}_k(t)+\mathcal{F}_k(t)\nonumber
 \\&&\quad\lesssim \norm{\na^{k+1} (n_1,n_2)}_{L^2}^2+\norm{\na^{k+1} B}_{L^2}^2 +\norm{\na^{k}\left(g_2+u_2\times B\right)}_{L^2}^2\nonumber\\
 &&\qquad+\norm{\na^{k}\left(g_4+u_1\times B\right)}_{L^2}^2 +\norm{ \na^{k}g_5}_{L^2}^2\nonumber
  \\&&\quad\lesssim \norm{\na^{k+1} (n_1,n_2, B)}_{L^2}^2 + \left(\norm{(u_1,u_2)}_{H^{\frac k2}}+\norm{\na B}_{L^2}\right)^2\norm{\na^{k+1}B}_{L^2}^2\nonumber\\
  &&\qquad+\norm{(n_1,n_2,u_1,u_2)}_{L^\infty}^2\norm{\na^{k+1}(n_1,n_2,u_1,u_2)}_{L^2}^2+\norm{\na n_1}_{L^\infty}^2\norm{\na^kn_1}_{L^2}^2\nonumber
\\
&&\quad\le C_0(1+t)^{-(k+1+s)},
\end{eqnarray}
where we required $N\ge2k+4+s$.
Applying the standard Gronwall lemma to \eqref{uE liu}, we obtain
\begin{equation}
\begin{split}
\mathcal{F}_k(t)\le \mathcal{F}_k(0)e^{-t}+C_0\int_0^te^{-(t-\tau)}(1+\tau)^{-(k+1+s)}\,d\tau\lesssim C_0(1+t)^{-(k+1+s)}.\nonumber
\end{split}
\end{equation}
This implies
\begin{equation*}
\norm{\na^k(u_1,u_2,E)(t)}_{L^2}\lesssim \sqrt{\mathcal{F}_k(t)}\lesssim C_0(1+t)^{-\frac{k+1+s}{2}}.
\end{equation*}
We thus complete the proof of \eqref{further decay1}. Notice that \eqref{further decay11} now follows by \eqref{n further decay} with the improved decay rate of $E$ in \eqref{further decay1}, just requiring $N\ge2k+6+s$.

Now we prove \eqref{further decay2}. Assuming $ B_\infty =0$, then we can extract the following system from  $\eqref{yi}_3$--$\eqref{yi}_4$, denoting $\psi={\rm div} u_2$,
\begin{equation}\label{npsi yi}
\left\{
\begin{array}{lll}
\displaystyle\partial_tn_2+\psi=g_3,   \\
\displaystyle\partial_t \psi+\nu  \psi-2\nu^2 n_2=-\Delta n_2-{\rm div}(g_4+u_1\times B)+2\nu^2\left(-g-n_2\right).
\end{array}
\right.
\end{equation}
Here $g$ is defined in \eqref{gf}.
Applying $\na^k$ to $\eqref{npsi yi}$ and then multiplying the resulting identities
by $2\nu^2\na^kn_2$, $\na^k\psi$, respectively, summing up and integrating over $\mathbb{R}^3$, we obtain
\begin{eqnarray}\label{npsi er}
\frac{d}{dt}&&\int \nu^2\norms{\na^k n_2 }^2+\frac{1}{2}\norms{\na^k \psi }^2+\nu\norm{\na^k\psi}_{L^2}^2\nonumber\\
&&=2\nu^2\int\na^kg_3 \na^kn_2-\int\na^k \Delta n_2 \na^k\psi\nonumber\\
&&\quad-\int\na^k\left[ {\rm div}(g_4+u_1\times B)-2\nu^2\left(-g-n_2\right)\right] \na^k\psi.
\end{eqnarray}
Applying $\na^k$ to $\eqref{npsi yi}_2$ and then multiplying by $-\na^kn_2$, as before integrating by parts over $t$ and $x$ variables and using the equation $\eqref{npsi yi}_1$, we may obtain
%-\int&\na^k \pa_t \psi\cdot\na^kn-\nu\int\na^k \psi\cdot\na^kn+\nu^2\norm{\na^kn}_{L^2}^2\\
%&=\int\na^k[\Delta n+{\rm div}(u\cdot\na u
%+\mu n\na n+u\times B)-\nu^2(f(n)-n)]\cdot\na^kn
\begin{eqnarray}\label{npsi san}
&&-\frac{d}{dt}\int\na^k\psi \na^kn_2+2\nu^2\norm{\na^kn_2}_{L^2}^2
=\norm{\na^k\psi}_{L^2}^2+\nu\int\na^kn_2 \na^k\psi-\int\na^kg_3 \na^k\psi\nonumber\\
&&\qquad\quad+\int\na^k\left[\Delta n_2+{\rm div}(g_4+u_1\times B)-2\nu^2\left(-g-n_2\right)\right] \na^kn_2.
\end{eqnarray}

Since $\varepsilon$ is small, we deduce from $\eqref{npsi san}\times\varepsilon+\eqref{npsi er}$ that there exists $\mathcal{G}_k(t)\sim\norm{\na^k(n_2,\psi)}_{L^2}^2$ such that, by Cauchy's inequality,
\begin{eqnarray}\label{npsi si}
 \frac{d}{dt}\mathcal{G}_k(t)+\mathcal{G}_k(t)&&\lesssim  \norm{\na^{k+2}n_2}_{L^2}^2+\norm{\na^{k}g_3}_{L^2}^2+\norm{\na^{k+1}g_4}_{L^2}^2+\norm{\na^{k+1}(u_1\times B)}_{L^2}^2\nonumber\\
 &&\quad+\norm{\na^{k}\left(-g-n_2\right)}_{L^2}^2.
\end{eqnarray}
By Lemma \ref{A2} and Lemma \ref{commutator}, we obtain
\begin{equation*}
\begin{split}
\norm{\na^{k}\left(-g-n_2\right)}_{L^2}^2&\lesssim \norm{n_1}_{L^\infty}^2
\norm{\na^{k}n_2}_{L^2}^2+\norm{\na^k n_1}_{L^2}^2\norm{n_2}_{L^\infty}^2\\
&\lesssim\delta_0
\norm{\na^{k}n_2}_{L^2}^2+\norm{n_2}_{L^\infty}^2\norm{\na^k n_1}_{L^2}^2.
\end{split}
\end{equation*}
By Lemma \ref{commutator} and Cauchy's inequality, we obtain
\begin{equation*}
\begin{split}
\norm{\na^{k+1}(u_1\times B)}_{L^2}^2&=\norm{u_1\times \na^{k+1}B+\left[\na^{k+1}, u_1\right]\times B}_{L^2}^2\\
&\lesssim\norm{u_1\times \na^{k+1}B}_{L^2}^2+\norm{\left[\na^{k+1}, u_1\right]\times B}_{L^2}^2\\
&\lesssim\norm{u_1}_{L^\infty}^2\norm{\na^{k+1}B}_{L^2}^2+\norm{\na u_1}_{L^\infty}^2\norm{\na^{k}B}_{L^2}^2+\norm{\na^{k+1}u_1}_{L^2}^2\norm{B}_{L^\infty}^2.
\end{split}
\end{equation*}
The other nonlinear terms on the right-hand side of \eqref{npsi si} can be estimated similarly.
Hence, we deduce from \eqref{npsi si} that, by \eqref{basic decay}--\eqref{further decay11},
\begin{eqnarray}\label{npsi qi}
 &&\frac{d}{dt}\mathcal{G}_k(t)+\mathcal{G}_k(t)\nonumber
 \\&&\quad\lesssim \norm{\na^{k+2}n_2}_{L^2}^2+\norm{u_1}_{L^\infty}^2\norm{\na^{k+1}B}_{L^2}^2+\norm{\na u_1}_{L^\infty}^2\norm{\na^{k}B}_{L^2}^2+\norm{B}_{L^\infty}^2\norm{\na^{k+1}u_1}_{L^2}^2\nonumber\\
&&\qquad+\norm{n_2}_{L^\infty}^2\norm{\na^k n_1}_{L^2}^2+\norm{(n_1,n_2,u_1,u_2)}_{L^\infty}^2\norm{\na^{k+2}(n_1,n_2,u_1,u_2)}_{L^2}^2 \nonumber\\ &&\qquad+\norm{\na(n_1,n_2,u_1,u_2)}_{L^\infty}^2\norm{\na^{k+1}(n_1,n_2,u_1,u_2)}_{L^2}^2\nonumber
\\&&\quad\le C_0\left((1+t)^{-(k+3+s)}+  (1+t)^{-(k+7/2+2s)} + (1+t)^{-(k+11/2+2s)} \right)\nonumber
\\&&\quad\le C_0(1+t)^{-(k+3+s)},
\end{eqnarray}
where we required $N\ge2k+8+s$.
Applying the Gronwall lemma to \eqref{npsi qi} again, we obtain
\begin{equation*}
\mathcal{G}_k(t)\le \mathcal{G}_k(0)e^{-t}+C_0\int_0^te^{-(t-\tau)}(1+\tau)^{-(k+3+s)}\,d\tau\le  C_0(1+t)^{-(k+3+s)}.
\end{equation*}
This implies
\begin{equation}\label{npsi ba}
\norm{\na^k(n_2,\psi)(t)}_{L^2}\lesssim \sqrt{\mathcal{G}_k(t) }\le C_0 (1+t)^{-\frac{k+3+s}{2}}.
\end{equation}
If required $N\ge2k+12+s$, then by \eqref{npsi ba}, we have
\begin{equation*}
\norm{\na^{k+2}n_2(t)}_{L^2} \lesssim C_0(1+t)^{-\frac{k+5+s}{2}}.
\end{equation*}
Having obtained such faster decay, we can then improve  \eqref{npsi qi} to be
\begin{equation*}
 \frac{d}{dt}\mathcal{G}_k(t)+\mathcal{G}_k(t)
 \le C_0\left((1+t)^{-(k+5+s)}+  (1+t)^{-(k+7/2+2s)}  \right)
  \le C_0(1+t)^{-(k+7/2+2s)}.
\end{equation*}
Applying the Gronwall lemma again, we obtain
\begin{equation*}
\norm{\na^k(n_2,\psi)(t)}_{L^2}\lesssim \sqrt{\mathcal{G}_k(t) }\le C_0 (1+t)^{-(k/2+7/4+s)}.
\end{equation*}
We thus complete the proof of \eqref{further decay2}. The proof of Theorem \ref{decay} is completed.

\smallskip\smallskip

\noindent{\bf Acknowledgments.}
\smallskip

\noindent This research is supported by the National Natural Science Foundation of China--NSAF (No. 10976026) and the National Natural
Science Foundation of China (Grant No. 11271305).

\end{document}